\def\@tocline#1#2#3#4#5#6#7{\relax
\ifnum #1>\c@tocdepth 
  \else 
    \par \addpenalty\@secpenalty\addvspace{#2}%
\begingroup \hyphenpenalty\@M
    \@ifempty{#4}{%
      \@tempdima\csname r@tocindent\number#1\endcsname\relax
 }{%
   \@tempdima#4\relax
 }%
 \parindent\z@ \leftskip#3\relax \advance\leftskip\@tempdima\relax
 \rightskip\@pnumwidth plus4em \parfillskip-\@pnumwidth
 #5\leavevmode\hskip-\@tempdima #6\nobreak\relax
 \ifnum#1<0\hfill\else\dotfill\fi\hbox to\@pnumwidth{\@tocpagenum{#7}}\par
 \nobreak
 \endgroup
  \fi}
\let\oldtocsection=\tocsection
\let\oldtocsubsection=\tocsubsection
\let\oldtocsubsubsection=\tocsubsubsection
\renewcommand{\tocsection}[2]{\hspace{0em}\oldtocsection{#1}{#2}}
\renewcommand{\tocsubsection}[2]{\hspace{1em}\oldtocsubsection{#1}{#2}}
\renewcommand{\tocsubsubsection}[2]{\hspace{2em}\oldtocsubsubsection{#1}{#2}}
\definecolor{cerulean}{rgb}{0,.48,.65} 
\definecolor{magenta}{rgb}{.5,0,.5} 
\definecolor{dred}{rgb}{.5,0,0} 
\definecolor{green}{rgb}{0,.5,0} 
\definecolor{blue}{rgb}{0,0,1} 
\definecolor{black}{rgb}{0,0,0} 
\definecolor{dgreen}{rgb}{0,.3,0} 
\definecolor{vdred}{rgb}{.3,0,0} 
\definecolor{red}{rgb}{1,0,0} 
\definecolor{salmon}{rgb}{0.98,0.50,0.45} 
\definecolor{gray}{rgb}{.5,.5,.5} 
\definecolor{seagreen}{rgb}{0.13,0.70,0.67} 
\definecolor{chartreuse}{rgb}{0.40,0.80,0.00}
\definecolor{cornflower}{rgb}{0.39,0.58,0.93} 
\definecolor{gold}{rgb}{0.80,0.68,0.00}
\theoremstyle{plain}
\newtheorem{thm}{Theorem}[section]
\newtheorem{lemma}[thm]{Lemma}
\newtheorem{cor}[thm]{Corollary}
\newtheorem{prop}[thm]{Proposition}
\newtheorem*{conjproblem*}{The conjugacy problem}
\newtheorem*{0twistedproblem*}{The 0-twisted-conjugacy problem}
\newtheorem*{Htwistedproblem*}{The H-twisted conjugacy problem}
\newtheorem*{Itwistedproblem*}{The I-twisted conjugacy problem}
\theoremstyle{definition}
\newtheorem{rem}[thm]{Remark}
\newtheorem{defn}[thm]{Definition}
\newtheorem{Open questions}[thm]{Open questions}
\newtheorem{Open question}[thm]{Open question}
\newtheorem{Open problems}[thm]{Open problems}
\newtheorem{Open problem}[thm]{Open problem}
\def\Bbb{\mathbb}
\def\bar{\overline}
\def\Z{\Bbb{Z}}
\def\N{\Bbb{N}}
\def\Naturals{\Bbb{N}}
\def\ni{\noindent}
\def\Area{\hbox{\rm Area}}
\def\CL{\hbox{\rm CL}}
\def\F+L{\hbox{$\textup{F}\!_+\textup{L}$}}
\def\Ann{\hbox{\rm Ann}}
\newcommand{\BS}{\mathrm{BS}}
\def\onto{{\kern3pt\to\kern-8pt\to\kern3pt}}
\def\<{\langle}
\def\>{\rangle}
\def\|{{\ |\ }}
\def\g{\gamma}
\newcommand{\set}[1]{\left\{#1\right\}}
\newcommand{\abs}[1]{\left|#1\right|}
\renewcommand{\ni}{\noindent}
\def\*{^{\star}}
\newcommand{\NN}{\mathbb{N}}
\newcommand{\ZZ}{\mathbb{Z}}
\newcommand{\del}{\partial}
\title[Distinguishing filling invariants associated to conjugacy in groups]{Distinguishing filling invariants associated \\ to conjugacy in groups}
\author{Conan Gillis and Timothy Riley}
\date \today
\begin{document}

\begin{abstract} Brick and Corson introduced  annular Dehn functions in 1998 to quantify the conjugacy problem for finitely generated groups and gave the fundamental relationships between it,  the Dehn function, and the conjugator length function.  We furnish the theory with diverse examples groups.  In particular, we show that these three invariants are independent---no two of the three functions determine the other.

\ni    

  \smallskip
\ni \footnotesize{\textbf{2020 Mathematics Subject Classification:  20F65, 20F10, 20F06}}  \\ 
\ni \footnotesize{\emph{Key words and phrases:} conjugacy problem,  Conjugator length, Dehn function}
\end{abstract}

\thanks{The authors gratefully acknowledge the support of NSF Grants DGE–2139899 (Gillis) and OIA-2428489 (Riley).}

\maketitle

\section{Introduction}
When a word $u$ represents $1$ in  a finitely presented group $G$, it admits a disk-diagram (a \emph{van~Kampen diagram}) witnessing to how this follows from the defining relations.  When words $u$ and $v$ represent conjugate elements, the corresponding witness is an \emph{annular diagram}.   

The \emph{annular Dehn function} $\Ann: \N \to \N$ of a finitely presented   group $G$ was introduced by Brick and Corson in 1998  \cite{brick_corson_1998}.  They defined $\Ann(n)$ to be the minimal $N$ such that whenever $u$ and $v$ are words that represent conjugate elements of $G$ and whose lengths sum to at most $n$, there is an annular diagram with at most $N$ faces such the inner boundary is labelled by $u$ and the outer boundary by $v$.

As Brick and Corson observed, the annular Dehn function is closely related to two other filling functions.   One is the \emph{Dehn function} $\Area: \N \to \N$ of $G$, which is defined so that $\Area(n)$ is the minimal $N$ such that whenever a word $u$ of length at most $n$ represents $1$ in $G$, it admits a van~Kampen diagram with at most $N$ faces.  The other is the \emph{conjugator length function} $\CL: \N \to \N$ of G, which  is defined so that $\CL(n)$ is the minimal $N$ such that whenever $u$ and $v$ are words that represent conjugate elements of $G$ and whose lengths sum to at most $n$,  there is a word $w$ of length at most $N$ such that $uw=wv$ in $G$.  Equivalently, there is an annular diagram for the pair $u$ and $v$ for which there is a path in the $1$-skeleton of length at most $N$ from the start of $u$ on one of the two boundary components to the start of $v$ on the other.  Section~\ref{prelimimaries} contains more detailed definitions.

Dehn functions have been studied extensively---\cite{Bridson6, brady2007geometry, Gersten, Sapir} are surveys. Conjugator length functions have also received considerable attention---\cite{BRS} is a recent account. Much less is known about annular Dehn functions.  

Brick and Corson \cite{brick_corson_1998} observed the fundamental relationships:

\begin{thm}[Brick and Corson's inequalities]\label{BCineq}
    For any finitely presented group $G$,
\begin{equation} \label{ann by area and cl} 
\Area(n) \ \leq \   \Ann(n) \ \leq \  \Area(2\CL(n)+n)
\end{equation}
and 
\begin{equation} \label{CL by Ann}
\CL(n) \ \leq  \ \frac{n}{2}+M\cdot \Ann(n),
\end{equation}
where $M >0$ is a constant depending on the presentation for $G$.
\end{thm}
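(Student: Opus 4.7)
The plan is to prove each of the three inequalities by a direct diagrammatic surgery that converts between van~Kampen and annular diagrams.

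For $\Area(n) \leq \Ann(n)$, I would use that any word $u$ with $|u| \leq n$ and $u = 1$ in $G$ is conjugate to the empty word. Applying the definition of $\Ann$ to the pair $(u, \epsilon)$ yields an annular diagram with at most $\Ann(n)$ faces; since one boundary component is empty (a single vertex), this annular diagram is a van~Kampen diagram for $u$ of the same area.

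For $\Ann(n) \leq \Area(2\CL(n) + n)$, given conjugate $u, v$ with $|u| + |v| \leq n$, I would fix a shortest conjugator $w$ satisfying $uw = wv$ in $G$, so $|w| \leq \CL(n)$. The word $u w v^{-1} w^{-1}$ then represents $1$ and has length at most $2\CL(n) + n$, so it admits a van~Kampen diagram $\Delta$ with at most $\Area(2\CL(n) + n)$ faces whose boundary decomposes into four arcs labelled $u, w, v^{-1}, w^{-1}$. Identifying the two $w$-labelled arcs along matching letters transforms $\Delta$ into an annular diagram for the pair $(u, v)$ with the same number of faces, giving the bound.

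For $\CL(n) \leq n/2 + M \cdot \Ann(n)$, I would take a minimal annular diagram $D$ for $(u, v)$ with at most $\Ann(n)$ faces and build a conjugator $w$ as the concatenation of three subpaths in the $1$-skeleton of $D$: an arc along the inner boundary from the basepoint $p_u$ of $u$ to a well-chosen vertex $x$, of length at most $|u|/2$; a crossing arc from $x$ to a vertex $y$ on the outer boundary; and an arc along the outer boundary from $y$ to the basepoint $p_v$ of $v$, of length at most $|v|/2$. For the crossing arc I would appeal to the dual graph of $D$: the annular topology produces a chain of faces $F_0, F_1, \dots, F_k$ with $F_0$ meeting the inner boundary, $F_k$ meeting the outer boundary, consecutive $F_i$ sharing an edge, and $k$ bounded by the total face count $\Ann(n)$. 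Concatenating subpaths along the boundaries $\partial F_i$ produces a $1$-skeleton path of length at most $M \cdot \Ann(n)$, where $M$ bounds the length of any defining relator; summing the three contributions gives $|w| \leq n/2 + M \cdot \Ann(n)$.

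The main obstacle will be rigorously producing the crossing chain of faces in the last step: one must rule out degeneracies (a disconnected dual graph, a face failing to contribute to any inner-to-outer route, or isolated ``bubbles'' of faces), and show that the annular topology together with minimality of $D$ guarantees a connected chain realising a path between the two boundary components. The first two inequalities are essentially formal once the disk/annulus surgeries are set up.
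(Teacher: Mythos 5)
Your arguments for the two inequalities in \eqref{ann by area and cl} are exactly the ones the paper sketches (an annular diagram with empty inner boundary \emph{is} a van~Kampen diagram; glue the two $w$-labelled arcs of a minimal-area diagram for $uwv^{-1}w^{-1}$), so those parts are fine.

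For \eqref{CL by Ann} you take a genuinely different and, as you suspect, problematic route. The obstacle you flag is real: in a singular annular diagram the inner and outer boundary components may be joined only through cut vertices, so there need not exist any chain of faces $F_0,\dots,F_k$ with consecutive faces sharing an edge running from one boundary component to the other (and even for nonsingular diagrams, extracting such a chain requires a duality/separation argument you have not supplied). The argument the paper indicates sidesteps all of this: one does not need a crossing chain of faces at all. The $1$-skeleton of an annular diagram is connected, so there is a \emph{simple} edge-path $\gamma$ from $p_1$ to $p_2$, and by Lemma~\ref{annulardiagramlemma} the word along $\gamma$ is a conjugator. Its length is at most the total number of edges $E$ of the diagram, and $E$ is bounded by a count of edge-sides: every edge has two sides, each of which belongs either to a face or to one of the two deleted faces $f_\infty, f_0$ (i.e.\ to a boundary circuit), so $2E \leq \sum_F |\partial F| + |u|+|v| \leq M\cdot\Ann(n) + n$, giving $|\gamma| \leq E \leq \tfrac{n}{2} + M\cdot\Ann(n)$ with $M$ the length of the longest relator. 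This is both shorter than your route and immune to the degeneracies you would otherwise have to rule out; if you want to keep your three-arc decomposition, you would still need to replace the dual-graph step by something like this global edge count.
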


In brief,  the second bound in \eqref{ann by area and cl} comes from the observation that for a pair of words $u$ and $v$ that represent conjugate elements of $G$, and for a minimal length conjugator $w$, we can form an annular diagram by identifying the two $w$-labelled sides in a minimal-area van~Kampen diagram for $w^{-1}uw v^{-1}$. The bound in \eqref{CL by Ann} comes from taking $M$ to be the length of the longest defining relation, since then the right-hand side is an upper bound on the number of edges in an annular diagram.

The purpose of this article is to furnish the theory of annular Dehn functions with some examples  which show that it harbors some richness and subtlety: specifically, we show that $\Area(n)$, $\CL(n)$, and $\Ann(n)$ are independent invariants and   \eqref{ann by area and cl} and  \eqref{CL by Ann} need not be sharp. These conclusions will be corollaries of the estimates that are summarized in the following theorem.  The conventions are $a^b = aba^{-1}$ and  $[a,b] = aba^{-1}b^{-1}$, and  for functions $f,g:\NN\to \NN$, $f \lesssim g$ if there exists a constant $C>0$ such that $f(n)  \leq   Cg(Cn+n)+Cn+C$ for all $n$.  We write $f \simeq g$ when $f\lesssim g$ and $g\lesssim f$.  Up to $\simeq$,  each of $\Ann(n)$, $\CL(n)$, and $\Area(n)$ do not depend on the choice of finite presentation for $G$.

\begin{thm}\label{MainTheorem}
Up to $\simeq$, the following finitely presented groups have the following Dehn functions, conjugator length functions, and annular Dehn functions (for all $d,m \geq 1$):

\begin{center}
\begin{tabular}{lccc}  
              & $\Area(n)$ &   $\Ann(n)$   &   $\CL(n)$   \\
\hline
$G_1 = \mathcal{H}_3(\ZZ)$ &$n^3$&$n^4$&$n^2$\\
$G_2 = \BS(1,2)$ &$2^n$&$2^n$&$n$\\
$G_3 = G_1\times G_2$ &$2^n$&$2^n$&$n^2$\\
$G_4$ & $2^n$&$2^{n^2}$&$n^2$\\
$G_{5,d} = \Z^d \rtimes \Z$  filiform    & $n^{d+1}$   &  &  $n^d$ \\
$G_{6,m}$      &  $n^{3}$  &  & $n^{m+1}$ \\
$G_{7} = G_{5,3} \ast  G_{6,20}$ & $n^4$ & \multirow{2}{*}{ $ \left.   \rule{0mm}{4mm}   \right\rbrace \simeq$}    & $n^{21}$ \\ 
$G_{8} = G_{5,4} \ast  G_{6,20}$ & $n^5$ &  & $n^{21}$  \\
   \end{tabular}
\end{center}
where  
$$\begin{array}{ll}
    G_1 &  \!\!\! = \ \langle a ,b,c\mid [a,c], [b,c], [a,b]c^{-1} \rangle \\
    G_2 &  \!\!\! = \ \langle a,s\mid s^a s^{-2}\rangle \\  
    G_4 &  \!\!\! = \ \langle a,b,c,d,s\mid [a,b]c^{-1}, [a,c], [b,c], [b,d], s^a s^{-2}, s^d   s^{-2} \rangle \\ 
    G_{5,d} & \!\!\! = \ \langle a_1,\ldots, a_d,t\mid a_i^t=a_ia_{i-1}\ \ \forall i>1, \ [a_1, t]=1, \ [a_i,a_j] =1 \ \ \forall i\neq j \rangle
\end{array}$$
and $G_{6,m}$  is per Definition \ref{G6m} below. 
\end{thm}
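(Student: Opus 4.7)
The plan is to prove the table row by row, leveraging Theorem~\ref{BCineq} where it is sharp and producing independent constructions for $\Ann$ where it is not. For $G_1, G_2$, and $G_{5,d}$ the Dehn function and conjugator length function values are classical (see the Dehn function surveys cited in the introduction and \cite{BRS} for conjugator length), so the real work is the annular Dehn function. For $G_2 = \BS(1,2)$ one has $\Ann(n) \leq \Area(2\CL(n)+n) \simeq 2^n$, matching the trivial lower bound $\Ann \geq \Area$. For $G_1 = \mathcal{H}_3(\ZZ)$ the Brick--Corson upper bound gives only $\Ann(n) \leq \Area(n^2) \simeq n^6$; to obtain the sharp $n^4$ I would build annular diagrams directly from the central-extension presentation: a conjugate pair $(u, u^w)$ with $|w| \simeq n^2$ admits a filling by stacking $O(n^2)$ basic commutator cells of area $O(n^2)$ each. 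The matching $n^4$ lower bound follows from exhibiting conjugate pairs enclosing a region of cubic $\mathcal{H}_3$-area, repeated $n$ times around the annulus.

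For $G_3 = G_1 \times G_2$ and $G_4$ I would exploit the product/amalgam structure. An annular diagram in a direct product splits into annular diagrams in each factor, and since $\CL(G_2)$ is linear the $G_2$-factor's exponential filling dominates, giving $\Ann(G_3) \simeq 2^n$. For $G_4$ the upper bound $\Ann \lesssim 2^{n^2}$ is immediate from Brick--Corson using $\CL \simeq n^2$ and $\Area \simeq 2^n$; the lower bound $\Ann \gtrsim 2^{n^2}$ is the crux. The plan is to produce a family of conjugate pairs $(u_n, v_n)$ of length $O(n)$, any annular filling of which must contain a $\BS(1,2)$-corridor of length $\simeq n^2$, forced by the Heisenberg distortion of the generators $a$ and $d$ entering the two Baumslag--Solitar relators $s^a s^{-2}$ and $s^d s^{-2}$; a $t$-corridor argument of Baumslag--Solitar type then extracts the $2^{\Theta(n^2)}$ area.

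For $G_{6,m}, G_7, G_8$ I would use the behavior of $\Area$ and $\CL$ under free products (each is, up to $\simeq$, the maximum of the factor values plus a linear term) to read off the first and third columns. For the $\Ann$ entries of $G_7$ and $G_8$ a normal-form argument applies: after cyclic reduction a conjugate pair in $A \ast B$ lies in a single factor, so its annular filling reduces to one in that factor (plus tree-of-pieces filling), and the bound depends only on the dominant factor. Since $G_7$ and $G_8$ share $G_{6,20}$, whose $\CL \simeq n^{21}$ dominates the $\CL \simeq n^d$ of the filiform factor for $d \in \{3,4\}$, the annular Dehn functions coincide up to $\simeq$. The independence corollaries then follow by reading off the table: $G_2$ vs.~$G_3$ shows $(\Area, \Ann)$ does not determine $\CL$; $G_3$ vs.~$G_4$ shows $(\Area, \CL)$ does not determine $\Ann$; and $G_7$ vs.~$G_8$ shows $(\Ann, \CL)$ does not determine $\Area$.

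The hard part will be the $\Ann \gtrsim 2^{n^2}$ lower bound for $G_4$: the argument must simultaneously invoke nilpotent distortion to force a long conjugator and a Baumslag--Solitar corridor argument to convert that length into exponentially many cells, and must show that the two effects compound inside an annular diagram rather than cancel. Designing the conjugate pair $(u_n, v_n)$ so that both mechanisms activate, and proving the corresponding area lower bound, is the heart of the proof.
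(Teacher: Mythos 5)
Your high-level plan coincides with the paper's: quote the known $\Area$ and $\CL$ values, use Theorem~\ref{BCineq} where it is sharp ($G_2$, and the $G_4$ upper bound), use direct-/free-product propositions for $G_3$, $G_7$, $G_8$, and give bespoke arguments for $\Ann_{G_1}$ and for the $2^{n^2}$ lower bound on $\Ann_{G_4}$. However, two of the mechanisms you sketch have genuine gaps. For $\Ann_{G_1}(n)\succeq n^4$, ``conjugate pairs enclosing a region of cubic area, repeated $n$ times around the annulus'' is not a workable lower-bound argument: a lower bound must control \emph{every} annular diagram for the pair, and nothing in your description forces $n$ disjoint cubic-area regions to appear. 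The paper's mechanism is different: for $u=b$ and $v=bc^{n^2}$, every conjugator has $a$-exponent sum exactly $n^2$, so cutting the annulus yields $n^2$ net positively-oriented radial $a$-corridors; the word carried along the $i$-th such corridor is forced (inductively, because the boundary words between consecutive corridors lie in $\langle b,c\rangle\cong\ZZ^2$) to be $bc^{i}$, so that corridor has at least $i$ cells and the total area is at least $\sum_{i=1}^{n^2} i\simeq n^4$. Some corridor-counting argument of this kind is needed; the ``enclosed area'' heuristic does not supply it. (Your upper-bound sketch for $G_1$ --- conjugate by an explicit $a^xb^y$ with $|x|\leq n$, $|y|\preceq n^2$ and count relator applications --- is exactly the paper's.)

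For $\Ann_{G_7}\simeq\Ann_{G_8}$, comparing the conjugator length functions of the free factors is not the right test: dominance of $\CL$ does not imply dominance of $\Ann$. What is actually needed is that the Brick--Corson \emph{lower} bound $\Ann_{G_{6,20}}(n)\succeq\CL_{G_{6,20}}(n)\simeq n^{21}$ exceeds the Brick--Corson \emph{upper} bounds $\Ann_{G_{5,3}}(n)\preceq\Area_{G_{5,3}}(n+\CL_{G_{5,3}}(n))\preceq n^{12}$ and $\Ann_{G_{5,4}}(n)\preceq n^{20}$; note how close $n^{20}$ is to $n^{21}$ --- this is precisely why the parameters $3$, $4$, $20$ work, and your $\CL$-comparison criterion would wrongly apply to, say, $G_{5,25}\ast G_{6,20}$. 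One must also route the comparison through the superadditive closures $\overline{\Ann_{G_i}}$ appearing in the free-product formula (a conjugate pair in a free product need not lie in a single factor); here it suffices that $n^{21}$ is itself superadditive. Finally, the $2^{n^2}$ lower bound for $G_4$, which you rightly identify as the crux and describe correctly in outline (Heisenberg distortion forces the conjugator's $a$- and $d$-exponent sums to be $\mp n^2$, and nested $d$-rings then double a power of $s$ that many times), is left entirely unexecuted, so as written the proposal does not establish that row of the table.
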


To be clear, the assertions made in the table above include that $\Ann_{G_7}(n)\simeq \Ann_{G_8}(n)$, but we do not determine their growth rates. We also do not determine $\Ann_{G_{5,d}}(n)$ or $\Ann_{G_{6,m}}(n)$.  

The Dehn function estimates for $G_1$ and $G_2$ in this theorem are well-known. Those for $G_{5,d}$  and $G_{6,m}$ are from \cite{BridsonPittet, GHR}.  The conjugator length estimates for  $G_1$, $G_2$, $G_{5,d}$,  and $G_{6,m}$ are from \cite{BRS, Sale3, BrRi2, BrRi1}. (We discuss these references in more detail in Section~\ref{Proof of main thm}.) The remaining estimates are established in this paper.  The construction of $G_4$ is also novel.

\begin{defn}\label{G6m} The groups $G_{6,m}$ of \cite{BrRi1}   are,  for all $m \geq 1$,  the central extensions of $\Z^{m+2} = \langle a_1, \ldots, a_m, b_1, b_2\rangle$ by $\Z^m  = \langle c_1, \ldots, c_m \rangle$ defined by 
the relations $b_1a_i=a_ib_1c_i$ for $i=1,\ldots, m$, $b_2a_i=a_ib_2c_{i+1}^{-1}$ for $i=1,\ldots,m-1$, and commutation relations for the pair of generators $(b_2,a_m)$, and all pairs of generators $(a_i,a_j)$, $(b_i,b_j)$, $(c_i,c_j)$, $(b_i,c_j)$, and $(a_i,c_j)$.
\end{defn}

\begin{cor}
The Dehn function, the conjugator length function, and the annular Dehn function are independent invariants for finitely presented groups.  That is, for any two of these invariants, there is a pair of finitely presented groups for which those two invariants agree, but the third differs (all up to $\simeq$).    
\end{cor}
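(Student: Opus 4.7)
The plan is to read the desired independence directly off the table in Theorem~\ref{MainTheorem}: for each of the three pairs of invariants, I will exhibit two groups from the list whose two chosen invariants agree (up to $\simeq$) while the third differs. There is no further work to do beyond identifying the right rows of the table and quoting Theorem~\ref{MainTheorem}.

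First, to see that $\CL$ is not determined by the pair $(\Area, \Ann)$, I would compare $G_2 = \BS(1,2)$ with $G_3 = G_1 \times G_2$. The table records $\Area \simeq 2^n$ and $\Ann \simeq 2^n$ for both groups, while $\CL(G_2) \simeq n$ and $\CL(G_3) \simeq n^2$, so $\CL(G_2) \not\simeq \CL(G_3)$.

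Next, to see that $\Ann$ is not determined by $(\Area, \CL)$, I would compare $G_3$ with $G_4$. Both have $\Area \simeq 2^n$ and $\CL \simeq n^2$, but $\Ann(G_3)\simeq 2^n$ whereas $\Ann(G_4)\simeq 2^{n^2}$, so the annular Dehn functions are not $\simeq$-equivalent.

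Finally, to see that $\Area$ is not determined by $(\Ann, \CL)$, I would compare the free products $G_7 = G_{5,3} \ast G_{6,20}$ and $G_8 = G_{5,4} \ast G_{6,20}$. The table asserts $\Ann(G_7)\simeq \Ann(G_8)$ and $\CL(G_7)\simeq \CL(G_8)\simeq n^{21}$, while $\Area(G_7) \simeq n^4 \not\simeq n^5 \simeq \Area(G_8)$. These three pairs together establish the corollary. The real content, of course, lies in Theorem~\ref{MainTheorem} itself; the main obstacle in the paper is not this corollary but rather verifying the table entries, in particular producing $G_4$ with the asserted gap between $\Area$ and $\Ann$ and controlling the annular Dehn functions of $G_7$ and $G_8$ well enough to conclude $\Ann(G_7)\simeq\Ann(G_8)$ without pinning down their growth rates.
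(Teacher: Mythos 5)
Your proposal is correct and matches the paper's proof exactly: the paper likewise compares $G_2$ with $G_3$ for conjugator length, $G_3$ with $G_4$ for the annular Dehn function, and $G_7$ with $G_8$ for the Dehn function, reading all conclusions off the table in Theorem~\ref{MainTheorem}.
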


\begin{proof}
By Theorem \ref{MainTheorem}, the groups $G_7$ and $G_8$ have different Dehn functions, but the other two functions the same. Likewise, compare $G_2$ and $G_3$ for conjugator length function, and compare $G_3$ and $G_4$ for annular Dehn function.   
\end{proof}

\begin{cor}
There exist finitely presented groups for which the inequalities  $$\Area(n) \ \stackrel{\textup{i}}{\lesssim} \ \Ann(n) \ \stackrel{\textup{ii}}{\lesssim} \  \Area(n + \CL(n) )$$
of Brick and Corson are  (1) both sharp, (2) both not sharp, (3) \textup{i} is sharp  but \textup{ii} is not, and (4) \textup{ii} is sharp  but \textup{i} is not, where by `sharp' we mean that $\preceq$ can be replaced $\simeq$.    
\end{cor}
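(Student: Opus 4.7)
The plan is to realize each of the four claimed sharpness combinations using a specific group already tabulated in Theorem~\ref{MainTheorem}. For any group $G$ in that table, sharpness of inequality i is the assertion $\Area(n)\simeq \Ann(n)$, and sharpness of ii is the assertion $\Ann(n)\simeq \Area(n+\CL(n))$; both reduce to reading and comparing the three columns of the table. A single observation drives every check: under $\simeq$, composition with a linear function is absorbed, whereas composition with a polynomial of degree $\geq 2$ strictly inflates polynomial and exponential growth types (sending $n^d\mapsto n^{de}$ and $2^n\mapsto 2^{n^e}$). Whether $\Area(n+\CL(n))\simeq\Area(n)$ therefore hinges only on whether $\CL(n)$ is linear or superlinear, once the growth of $\Area$ is known.

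For case (1), I would choose $G_2=\BS(1,2)$: $\Area\simeq\Ann\simeq 2^n$ gives i sharp, and $\CL\simeq n$ linear gives $\Area(n+\CL(n))\simeq 2^n\simeq \Ann$, so ii is sharp. For case (2), I would choose $G_1=\mathcal{H}_3(\ZZ)$: $\Area\simeq n^3$ and $\Ann\simeq n^4$ are genuinely different, and $\CL\simeq n^2$ inflates $\Area$ so that $\Area(n+\CL(n))\simeq n^6\not\simeq n^4\simeq\Ann$. For case (3), I would take $G_3=G_1\times G_2$: here $\Area\simeq\Ann\simeq 2^n$ gives i sharp, but $\CL\simeq n^2$ inflates to $\Area(n+\CL(n))\simeq 2^{n^2}\not\simeq 2^n\simeq \Ann$. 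For case (4), I would take $G_4$: $\Area\simeq 2^n\not\simeq 2^{n^2}\simeq \Ann$ so i fails, but $\CL\simeq n^2$ inflates $\Area$ exactly up to the Brick--Corson upper bound, $\Area(n+\CL(n))\simeq 2^{n^2}\simeq \Ann$, making ii sharp.

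No substantive obstacle appears in the corollary itself once Theorem~\ref{MainTheorem} is granted. The content is pushed into the main theorem, and specifically into the novel construction of $G_4$ with its triple $(\Area,\Ann,\CL)\simeq(2^n,\,2^{n^2},\,n^2)$. Without $G_4$, case (4) would be unavailable, and the four cases could not be separated; so the genuine work--and the main potential obstacle--is the verification of $\Ann_{G_4}(n)\simeq 2^{n^2}$ done later in the paper, not the bookkeeping performed here.
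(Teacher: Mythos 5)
Your proposal is correct and uses exactly the same four examples as the paper---$G_2$, $G_1$, $G_3$, $G_4$ for cases (1)--(4) respectively---with the paper leaving the table-checking implicit where you spell it out. The verification details you supply are all accurate, so nothing further is needed.
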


\begin{proof}
Our examples are: (1)  $G_2$,  (2) $G_1$,  (3) $G_3$, and (4)  $G_4$.  
\end{proof}

\emph{An outine of this article.} Section~\ref{prelimimaries} provides preliminaries concerning van Kampen diagrams, annular diagrams, Dehn functions, annular Dehn functions, and conjugator length functions.  It includes an account (Propositions~\ref{dirprodannular} and \ref{freeprodannular}) of how each of these three functions behaves with respect to taking direct products and free products.  Section~\ref{Proof of main thm} is our proof of Theorem~\ref{MainTheorem} apart from the following. Section~\ref{heisensec} contains a proof   that $n^4\simeq \Ann_{\mathcal{H}_{3}(\ZZ)}(n)$.  Section~\ref{graphofgroups_BS12_and_heisenberg} establishes the three estimates concerning $G_4$. The group $G_4$ is an amaglamated free product between an ascending HNN-extension of $\mathcal{H}_3(\ZZ)$ and another amalgamated free product of two copies of $\BS(1,2)$ joined along the exponentially distorted subgroup. These copies of $\BS(1,2)$ are the source of the exponential Dehn function (Corollary~\ref{cor_base} and Proposition~\ref{Dehn exp upper bound G4}), and the $H_3(\ZZ)$ seeds the quadratic conjugator length function (Corollary~\ref{cor_base} and Proposition~\ref{G4_CL_Upper}). Their composition $2^{n^2}$ gives an upper bound on the annular Dehn function (by Theorem~\ref{BCineq}). This is matched by a lower bound (Proposition~\ref{lower bound on anng4 prop}) because there are, for all $n$, pairs of conjugate words of total length $\sim \! n$ for which in any  annular diagram, the quadratic distortion within $H_3(\ZZ)$ feeds into the exponential Dehn function in $\BS(1,2)$, so as to create at least $2^{n^2}$ area.

\section{Preliminaries}\label{prelimimaries}

\subsection{Diagrams}\label{diags} The following brief account is based on  \cite{lyndon2001combinatorial}, to which we refer the reader for full details. A \emph{(singular disc) diagram}  $\Delta$ is a planar 2-complex that is obtained from a finite 2-complex homeomorphic to the 2-sphere by removing the interior of one face $f_{\infty}$ (``at infinity'').  So the attaching map of the removed face traverses the boundary $\partial \Delta$.  An \emph{annular diagram} $\Omega$ is obtained from such a $\Delta$ by removing the interior of a further face $f_{0}$.  Accordingly, $\partial \Omega$ is the union of the \emph{outer boundary} $\partial \Delta$  and the \emph{inner boundary} traversed by the attaching map of $f_{0}$.  We also allow the degenerate case where  $\Omega = \Delta$, in which case the \emph{inner boundary} is any vertex of $\Omega$.  

Let $G=\langle X \mid R\rangle$ be a finitely presented group. We consider singular disc diagrams and annular diagrams $\Delta$ such that all edges are directed and are labeled with elements of $X$. For every directed edge $e$ from vertex $u$ to vertex $v$, labeled by $x\in X$, we consider $e^{-1}$ to denote the  ``inverse'' edge going from $v$ to $u$ and labeled by $x^{-1}$. An edge-path $p$ in $\Delta$ is a sequence of edges $(e_1^{\delta_1}, \ldots, e_n^{\delta_n})$  with $\delta_i\in \{\pm1\}$ for all $i$  such that the terminal vertex of $e_i^{\delta_i}$ equals the starting vertex of $e_{i+1}^{\delta_{i+1}}$ for  $i=1, \ldots, n-1$. The \emph{word along $p$} is the word over $X$  read  off the labels of the $e_i^{\delta_i}$.

We say that a singular disc diagram $\Delta$  is  a \emph{van~Kampen diagram} for a word $w$  over $X^{\pm1}$ read around $\partial \Delta$ (from some starting vertex) if for every face $F$ of $\Delta$, the word read around $\del F$ (from some starting vertex on $F$) is in $R^{\pm1}$.

An annular diagram $\Omega$  \emph{for the pair of words $u$ and $v$ over $X^{\pm1}$} is when $\Omega$ is likewise labelled so that the words read around the two boundary components  (from some starting vertices) are $u$ and $v$, and the words read around the faces are in $R^{\pm1}$.  A van~Kampen diagram for a word $w$ is an annular diagram for the pair $w$ and the empty word.

The significance of these diagrams is apparent from the following results, which can be found in \cite{lyndon2001combinatorial}---see Proposition III.9.2 and Lemmas V.5.1 and V.5.2---and in \cite{BRS}.

\begin{lemma}[van~Kampen's Lemma]\label{VanKampensLemma} Let $G=\langle X\mid R\rangle$ be a finitely presented group. A word $w$ over $X$ represents the identity in $G$ if and only if there exists a van~Kampen diagram for $w$.
\end{lemma}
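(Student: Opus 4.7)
The plan is to prove the two implications separately, and both by standard combinatorial arguments on the diagram.

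For the \emph{if} direction, I would induct on the number $N$ of $2$-cells of a van~Kampen diagram $\Delta$ for $w$. In the base case $N=0$, the underlying $2$-complex is simply connected with no $2$-cells, hence its $1$-skeleton is a contractible graph, i.e.\ a tree, so the boundary word read around $\partial\Delta$ freely reduces to the empty word and in particular represents $1$ in $G$. For the inductive step with $N \geq 1$, I would locate a $2$-cell $F$ whose boundary meets $\partial\Delta$ (after collapsing any tree-like filaments), choose a basepoint on $\partial\Delta\cap\partial F$, and cut along $\partial F$ to produce a diagram $\Delta'$ with $N-1$ faces. The boundary word of $\Delta$ is then freely equal to a product $g r^{\pm 1} g^{-1}\cdot w'$, where $r \in R$ labels $\partial F$, $g$ is a path from the basepoint of $\partial\Delta$ to the basepoint of $\partial F$, and $w'$ is the boundary word of $\Delta'$; by induction $w' =_G 1$, and $g r^{\pm 1} g^{-1} =_G 1$ is immediate, so $w =_G 1$.

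For the \emph{only if} direction, suppose $w =_G 1$. Then in the free group $F(X)$ we can write $w \ =_F \ \prod_{i=1}^k g_i \, r_i^{\epsilon_i} \, g_i^{-1}$ for some $r_i \in R$, $\epsilon_i \in \{\pm 1\}$, and words $g_i$. For each factor I build a \emph{lollipop} $L_i$: a stem (an edge-path labelled by $g_i$) emanating from a common basepoint $v_0$ and terminating at a single $2$-cell whose attaching map reads $r_i^{\epsilon_i}$. Wedging the $L_i$ at $v_0$ yields a singular disc diagram whose boundary word, read from $v_0$, is the concatenation $g_1 r_1^{\epsilon_1} g_1^{-1}\cdots g_k r_k^{\epsilon_k} g_k^{-1}$, which equals $w$ up to free reduction in $F(X)$. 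To obtain a diagram whose boundary reads exactly $w$, I would carry out these free reductions geometrically, iteratively folding adjacent pairs of inverse-labelled boundary edges until the boundary word becomes $w$ letter by letter.

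The main technical care, in both directions, concerns the \emph{singular} nature of the diagrams: in the \emph{if} direction, ensuring the peeling argument is still valid when $\partial F$ meets $\partial \Delta$ only at a vertex rather than along an edge (which may require a secondary induction on the number of edges not lying on a $2$-cell boundary); and in the \emph{only if} direction, verifying that the iterative edge-identifications preserve planarity and yield a well-defined singular disc diagram. Neither difficulty is deep, and both are handled by the standard references such as Chapter V of \cite{lyndon2001combinatorial}; I would invoke those to keep the argument brief.
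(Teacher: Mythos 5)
The paper does not prove this lemma; it cites it as Proposition~III.9.2 of \cite{lyndon2001combinatorial}. Your sketch is the standard argument found there (peeling off faces one at a time for one direction; the wedge-of-lollipops construction followed by folding for the other), correctly flags the genuine technical points (handling filaments, preserving planarity under folding), and appropriately defers those details to the same reference the paper cites, so it is consistent with the paper's treatment.
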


\begin{lemma}\label{annulardiagramlemma}
    Let $G=\langle X\mid R\rangle$ be a finitely presented group, and $u$ and $v$ words over $X$. Then $u$ and $v$ represent conjugate elements in $G$ if and only if there exists an annular diagram $\Omega$ for $u$ and $v$. Moreover, let $p_1$ and $p_2$ be vertices on the outer and inner boundaries (respectively) of $\Omega$  from which we read $u$ and $v$ (respectively). If $\gamma$ is an  edge-path from $p_1$ to $p_2$, and $w$ is the word along $\gamma$, then $wuw^{-1}=v$ in $G$.
\end{lemma}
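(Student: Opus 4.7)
The plan is to prove both directions by the standard cut-and-paste correspondence between van~Kampen diagrams for $wuw^{-1}v^{-1}$ and annular diagrams for the pair $(u,v)$ equipped with a distinguished edge-path labelled $w$ from outer to inner boundary. For the forward implication, assume $u$ and $v$ are conjugate in $G$; then there is a word $w$ with $wuw^{-1}v^{-1}=1$ in $G$. Apply Lemma~\ref{VanKampensLemma} to obtain a van~Kampen diagram $\Delta$ for $wuw^{-1}v^{-1}$. Its boundary decomposes into four consecutive subarcs labelled $w$, $u$, $w^{-1}$, $v^{-1}$. Identify the two $w$-labelled subarcs via the label- and orientation-preserving homeomorphism between them; the quotient $\Omega$ inherits a planar structure with two boundary circles spelling $u$ and $v$, and is the sought annular diagram.

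For the converse together with the ``moreover'' statement, start with an annular diagram $\Omega$, basepoints $p_1$ and $p_2$ on its outer and inner boundaries spelling $u$ and $v$, and an edge-path $\gamma$ from $p_1$ to $p_2$ with word $w$. Cut $\Omega$ along $\gamma$: duplicate each interior edge of $\gamma$ and each of its interior vertices, placing one copy on either side of the cut. The resulting singular disc diagram $\Delta'$ has the same face set as $\Omega$, and its boundary, read from a preimage of $p_1$ in the induced orientation, spells $u \cdot w \cdot v^{-1} \cdot w^{-1}$. By Lemma~\ref{VanKampensLemma}, this word represents $1$ in $G$, whence $wuw^{-1}=v$; in particular $u$ and $v$ are conjugate.

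The main subtlety lies in the cutting operation in the singular planar setting: a general $\gamma$ may repeat edges, cross itself, or pass through pinch points of $\Omega$. I would reduce to the case where $\gamma$ is an embedded arc by observing that inserting or deleting a backtrack $e\cdot e^{-1}$ in $\gamma$ alters $w$ only by a free-group equivalence and so preserves the equation $wuw^{-1}=v$ in $G$; once $\gamma$ is simple, the cut is a local planar operation and the boundary word of $\Delta'$ is verified directly by reading around a planar neighborhood of the arc. Orientation conventions (which boundary is outer, and in which direction boundary words are read) pin down the form of the conjugator as $w$ rather than $w^{-1}$; both are handled by the same argument after adjusting the starting point and orientation of the boundary traversal.
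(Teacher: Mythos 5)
Your argument is the standard cut-and-paste correspondence between van~Kampen diagrams for $wuw^{-1}v^{-1}$ and annular diagrams, which is precisely the proof the paper defers to --- it does not prove Lemma~\ref{annulardiagramlemma} itself but cites Lyndon--Schupp (Lemmas V.5.1 and V.5.2) and \cite{BRS}, and it uses the same cutting operation explicitly in the proof of Proposition~\ref{Heisenberg_upper}. One bookkeeping point: from a disc boundary word $u\,w\,v^{-1}w^{-1}=1$ one deduces $w^{-1}uw=v$ rather than $wuw^{-1}=v$; under the paper's conventions the cut disc should be read as $w\,u\,w^{-1}v^{-1}$ (start the traversal at the other copy of the cut path's initial vertex), which gives the stated form --- you already flag this orientation dependence, so it is a convention adjustment rather than a gap.
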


Suppose   $G=\langle X\mid R\rangle$ is  a finitely presented group and $x \in X$. Suppose that, for all $r\in R$, either $r$ contains no $x$-letters or contains exactly one $x$ and one $x^{-1}$. Then, every face in a (van~Kampen or annular) diagram $\Delta$ will have either two or zero edges labeled $x$.  With the exception of cells adjacent to the boundary of $\Delta$, each cell with two $x$-edges must be adjacent to other such cells, thereby giving rise to a sequence of these cells which we call an \emph{$x$-corridor}---see Figure~\ref{fig:x-corrdior};   $\cdots \beta_{r_{i}}^{-1}\beta_{r_{i+1}}^{-1}\beta_{r_{i+2}}^{-1}\beta_{r_{i+3}}^{-1}\beta_{r_{i+4}}^{-1} \cdots$ and  $\cdots \alpha_{r_{i}}\alpha_{r_{i+1}}\alpha_{r_{i+2}}\alpha_{r_{i+3}}\alpha_{r_{i+4}} \cdots$ are the \emph{words along its sides}.

\begin{figure}
    \centering

\tikzset{every picture/.style={line width=0.75pt}} 

\begin{tikzpicture}[x=0.75pt,y=0.75pt,yscale=-1,xscale=1]

\draw    (100,100) -- (128,100) ;
\draw [shift={(130,100)}, rotate = 180] [color={rgb, 255:red, 0; green, 0; blue, 0 }  ][line width=0.75]    (10.93,-3.29) .. controls (6.95,-1.4) and (3.31,-0.3) .. (0,0) .. controls (3.31,0.3) and (6.95,1.4) .. (10.93,3.29)   ;
\draw    (130,100) -- (178,100) ;
\draw [shift={(180,100)}, rotate = 180] [color={rgb, 255:red, 0; green, 0; blue, 0 }  ][line width=0.75]    (10.93,-3.29) .. controls (6.95,-1.4) and (3.31,-0.3) .. (0,0) .. controls (3.31,0.3) and (6.95,1.4) .. (10.93,3.29)   ;
\draw    (180,100) -- (228,100) ;
\draw [shift={(230,100)}, rotate = 180] [color={rgb, 255:red, 0; green, 0; blue, 0 }  ][line width=0.75]    (10.93,-3.29) .. controls (6.95,-1.4) and (3.31,-0.3) .. (0,0) .. controls (3.31,0.3) and (6.95,1.4) .. (10.93,3.29)   ;
\draw    (230,100) -- (278,100) ;
\draw [shift={(280,100)}, rotate = 180] [color={rgb, 255:red, 0; green, 0; blue, 0 }  ][line width=0.75]    (10.93,-3.29) .. controls (6.95,-1.4) and (3.31,-0.3) .. (0,0) .. controls (3.31,0.3) and (6.95,1.4) .. (10.93,3.29)   ;
\draw    (280,100) -- (328,100) ;
\draw [shift={(330,100)}, rotate = 180] [color={rgb, 255:red, 0; green, 0; blue, 0 }  ][line width=0.75]    (10.93,-3.29) .. controls (6.95,-1.4) and (3.31,-0.3) .. (0,0) .. controls (3.31,0.3) and (6.95,1.4) .. (10.93,3.29)   ;
\draw    (350,150) -- (349.96,130) ;
\draw    (100,100) -- (100,128) ;
\draw [shift={(100,130)}, rotate = 270] [color={rgb, 255:red, 0; green, 0; blue, 0 }  ][line width=0.75]    (10.93,-3.29) .. controls (6.95,-1.4) and (3.31,-0.3) .. (0,0) .. controls (3.31,0.3) and (6.95,1.4) .. (10.93,3.29)   ;
\draw    (300,150.1) -- (299.96,130.1) ;
\draw    (150,100) -- (150,128) ;
\draw [shift={(150,130)}, rotate = 270] [color={rgb, 255:red, 0; green, 0; blue, 0 }  ][line width=0.75]    (10.93,-3.29) .. controls (6.95,-1.4) and (3.31,-0.3) .. (0,0) .. controls (3.31,0.3) and (6.95,1.4) .. (10.93,3.29)   ;
\draw    (250,150.19) -- (250,130) ;
\draw    (200,100) -- (200,128) ;
\draw [shift={(200,130)}, rotate = 270] [color={rgb, 255:red, 0; green, 0; blue, 0 }  ][line width=0.75]    (10.93,-3.29) .. controls (6.95,-1.4) and (3.31,-0.3) .. (0,0) .. controls (3.31,0.3) and (6.95,1.4) .. (10.93,3.29)   ;
\draw    (330,100) -- (350,100) ;
\draw    (349.96,149.52) -- (321.96,149.57) ;
\draw [shift={(319.96,149.58)}, rotate = 359.89] [color={rgb, 255:red, 0; green, 0; blue, 0 }  ][line width=0.75]    (10.93,-3.29) .. controls (6.95,-1.4) and (3.31,-0.3) .. (0,0) .. controls (3.31,0.3) and (6.95,1.4) .. (10.93,3.29)   ;
\draw    (319.96,149.58) -- (271.96,149.67) ;
\draw [shift={(269.96,149.67)}, rotate = 359.89] [color={rgb, 255:red, 0; green, 0; blue, 0 }  ][line width=0.75]    (10.93,-3.29) .. controls (6.95,-1.4) and (3.31,-0.3) .. (0,0) .. controls (3.31,0.3) and (6.95,1.4) .. (10.93,3.29)   ;
\draw    (269.96,149.67) -- (221.96,149.77) ;
\draw [shift={(219.96,149.77)}, rotate = 359.89] [color={rgb, 255:red, 0; green, 0; blue, 0 }  ][line width=0.75]    (10.93,-3.29) .. controls (6.95,-1.4) and (3.31,-0.3) .. (0,0) .. controls (3.31,0.3) and (6.95,1.4) .. (10.93,3.29)   ;
\draw    (219.96,149.77) -- (171.96,149.86) ;
\draw [shift={(169.96,149.87)}, rotate = 359.89] [color={rgb, 255:red, 0; green, 0; blue, 0 }  ][line width=0.75]    (10.93,-3.29) .. controls (6.95,-1.4) and (3.31,-0.3) .. (0,0) .. controls (3.31,0.3) and (6.95,1.4) .. (10.93,3.29)   ;
\draw    (169.96,149.87) -- (121.96,149.96) ;
\draw [shift={(119.96,149.96)}, rotate = 359.89] [color={rgb, 255:red, 0; green, 0; blue, 0 }  ][line width=0.75]    (10.93,-3.29) .. controls (6.95,-1.4) and (3.31,-0.3) .. (0,0) .. controls (3.31,0.3) and (6.95,1.4) .. (10.93,3.29)   ;
\draw    (119.96,149.96) -- (99.96,150) ;
\draw    (250,100) -- (250,128) ;
\draw [shift={(250,130)}, rotate = 270] [color={rgb, 255:red, 0; green, 0; blue, 0 }  ][line width=0.75]    (10.93,-3.29) .. controls (6.95,-1.4) and (3.31,-0.3) .. (0,0) .. controls (3.31,0.3) and (6.95,1.4) .. (10.93,3.29)   ;
\draw    (300,100) -- (300,128) ;
\draw [shift={(300,130)}, rotate = 270] [color={rgb, 255:red, 0; green, 0; blue, 0 }  ][line width=0.75]    (10.93,-3.29) .. controls (6.95,-1.4) and (3.31,-0.3) .. (0,0) .. controls (3.31,0.3) and (6.95,1.4) .. (10.93,3.29)   ;
\draw    (350,100) -- (350,128) ;
\draw [shift={(350,130)}, rotate = 270] [color={rgb, 255:red, 0; green, 0; blue, 0 }  ][line width=0.75]    (10.93,-3.29) .. controls (6.95,-1.4) and (3.31,-0.3) .. (0,0) .. controls (3.31,0.3) and (6.95,1.4) .. (10.93,3.29)   ;
\draw    (200,150.29) -- (200,130) ;
\draw    (150,150.38) -- (150,130) ;
\draw    (100,150.48) -- (100,130) ;

\draw (102,118.4) node [anchor=north west][inner sep=0.75pt]  [font=\small]  {$x$};
\draw (152,118.4) node [anchor=north west][inner sep=0.75pt]  [font=\small]  {$x$};
\draw (202,118.4) node [anchor=north west][inner sep=0.75pt]  [font=\small]  {$x$};
\draw (252,118.4) node [anchor=north west][inner sep=0.75pt]  [font=\small]  {$x$};
    \draw (302,118.4) node [anchor=north west][inner sep=0.75pt]  [font=\small]  {$x$};
\draw (352,118.4) node [anchor=north west][inner sep=0.75pt]  [font=\small]  {$x\ \ \ \ \cdots $};
\draw (111,82.4) node [anchor=north west][inner sep=0.75pt]  [font=\small]  {$\alpha _{r_{i}}$};
\draw (161,82.4) node [anchor=north west][inner sep=0.75pt]  [font=\small]  {$\alpha _{r_{i+1}}$};
\draw (211,82.4) node [anchor=north west][inner sep=0.75pt]  [font=\small]  {$\alpha _{r_{i+2}}$};
\draw (261,82.4) node [anchor=north west][inner sep=0.75pt]  [font=\small]  {$\alpha _{r_{i+3}}$};
\draw (311,82.4) node [anchor=north west][inner sep=0.75pt]  [font=\small]  {$\alpha _{r_{i+4}}$};
\draw (111,152.4) node [anchor=north west][inner sep=0.75pt]  [font=\small]  {$\beta _{r_{i}}{}$};
\draw (161,152.4) node [anchor=north west][inner sep=0.75pt]  [font=\small]  {$\beta _{r_{i+1}}{}$};
\draw (211,152.4) node [anchor=north west][inner sep=0.75pt]  [font=\small]  {$\beta _{r_{i+2}}$};
\draw (261,152.4) node [anchor=north west][inner sep=0.75pt]  [font=\small]  {$\beta _{r_{i+3}}$};
\draw (311,152.4) node [anchor=north west][inner sep=0.75pt]  [font=\small]  {$\beta _{r_{i+4}}$};
\draw (53.67,118.4) node [anchor=north west][inner sep=0.75pt]  [font=\small]  {$\cdots $};

\end{tikzpicture}

    \caption{A general $x$-corridor}
    \label{fig:x-corrdior}
\end{figure}
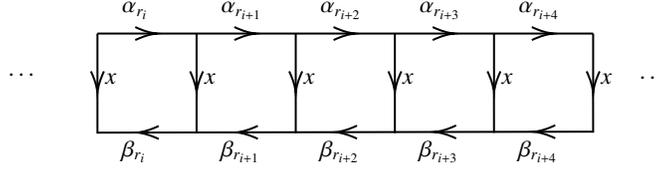

If an $x$-corridor has no cells adjacent to the boundary, and the words along both sides are the same, then it may be excised from the diagram without increasing the area, as shown in Figure \ref{fig:same_sides_excise_1} (depicting the case of an $x$-corridor in an annular diagram). Roughly speaking this excision is done by removing the annulus corridor and then identifying the paths along its two boundaries.  (A technical concern here is that this identification may break the planarity of the diagram.  How to navigate this issue is explained in \cite{BRS}.) 

There are some important differences between $x$-corridors in van~Kampen diagrams and in annular diagrams. In a van~Kampen diagram $\Delta$, an $x$-corridor must either form a closed loop, which itself bounds a contractible subcomplex of $\Delta$, or it must run from the boundary of $\Delta$ to itself. However, for an annular diagram $\Omega$, there are two boundary components and $x$-corridor come in four types:
\begin{enumerate}
    \item\label{arch} \textit{$x$-arches} run from one boundary component of $\Omega$ to the same component,
    \item\label{radial} \textit{radial $x$-corridors} run from one boundary component of $\Omega$ to the other,
    \item\label{contractible} \textit{contractible $x$-rings} form annuli bounding \textit{contractible} subcomplexes of $\Omega$, and
    \item\label{noncontractible} \textit{non-contractible $x$-rings} form an annuli not bounding \textit{contractible} subcomplexes.
\end{enumerate}

\begin{figure}
    \centering

$$\begin{matrix}
\tikzset{every picture/.style={line width=0.75pt}} 

\begin{tikzpicture}[x=0.5pt,y=0.5pt,yscale=-1,xscale=1]

\draw  [line width=1.5]  (180,150) .. controls (180,83.73) and (233.73,30) .. (300,30) .. controls (366.27,30) and (420,83.73) .. (420,150) .. controls (420,216.27) and (366.27,270) .. (300,270) .. controls (233.73,270) and (180,216.27) .. (180,150) -- cycle ;
\draw  [line width=1.5]  (280,150) .. controls (280,138.95) and (288.95,130) .. (300,130) .. controls (311.05,130) and (320,138.95) .. (320,150) .. controls (320,161.05) and (311.05,170) .. (300,170) .. controls (288.95,170) and (280,161.05) .. (280,150) -- cycle ;
\draw   (238.75,150) .. controls (238.75,116.17) and (266.17,88.75) .. (300,88.75) .. controls (333.83,88.75) and (361.25,116.17) .. (361.25,150) .. controls (361.25,183.83) and (333.83,211.25) .. (300,211.25) .. controls (266.17,211.25) and (238.75,183.83) .. (238.75,150) -- cycle ;
\draw   (220,150) .. controls (220,105.82) and (255.82,70) .. (300,70) .. controls (344.18,70) and (380,105.82) .. (380,150) .. controls (380,194.18) and (344.18,230) .. (300,230) .. controls (255.82,230) and (220,194.18) .. (220,150) -- cycle ;
\draw  [draw opacity=0][dash pattern={on 0.84pt off 2.51pt}] (350,150) .. controls (350,150) and (350,150) .. (350,150) .. controls (350,177.61) and (327.61,200) .. (300,200) .. controls (272.39,200) and (250,177.61) .. (250,150) .. controls (250,122.39) and (272.39,100) .. (300,100) .. controls (318.51,100) and (334.66,110.05) .. (343.31,125) -- (300,150) -- cycle ; \draw  [dash pattern={on 0.84pt off 2.51pt}] (350,150) .. controls (350,150) and (350,150) .. (350,150) .. controls (350,177.61) and (327.61,200) .. (300,200) .. controls (272.39,200) and (250,177.61) .. (250,150) .. controls (250,122.39) and (272.39,100) .. (300,100) .. controls (318.51,100) and (334.66,110.05) .. (343.31,125) ;  
\draw  [dash pattern={on 0.84pt off 2.51pt}]  (343.31,125) .. controls (344.27,126.88) and (349.04,137.89) .. (349.89,148.06) ;
\draw [shift={(350,150)}, rotate = 268.21] [color={rgb, 255:red, 0; green, 0; blue, 0 }  ][line width=0.75]    (10.93,-3.29) .. controls (6.95,-1.4) and (3.31,-0.3) .. (0,0) .. controls (3.31,0.3) and (6.95,1.4) .. (10.93,3.29)   ;
\draw  [draw opacity=0][dash pattern={on 0.84pt off 2.51pt}] (390,150) .. controls (390,150) and (390,150) .. (390,150) .. controls (390,150) and (390,150) .. (390,150) .. controls (390,199.71) and (349.71,240) .. (300,240) .. controls (250.29,240) and (210,199.71) .. (210,150) .. controls (210,100.29) and (250.29,60) .. (300,60) .. controls (341.65,60) and (376.69,88.29) .. (386.95,126.7) -- (300,150) -- cycle ; \draw  [dash pattern={on 0.84pt off 2.51pt}] (390,150) .. controls (390,150) and (390,150) .. (390,150) .. controls (390,150) and (390,150) .. (390,150) .. controls (390,199.71) and (349.71,240) .. (300,240) .. controls (250.29,240) and (210,199.71) .. (210,150) .. controls (210,100.29) and (250.29,60) .. (300,60) .. controls (341.65,60) and (376.69,88.29) .. (386.95,126.7) ;  
\draw  [dash pattern={on 0.84pt off 2.51pt}]  (386.95,126.7) .. controls (387.92,128.58) and (389.47,138.09) .. (389.93,148.08) ;
\draw [shift={(390,150)}, rotate = 268.21] [color={rgb, 255:red, 0; green, 0; blue, 0 }  ][line width=0.75]    (10.93,-3.29) .. controls (6.95,-1.4) and (3.31,-0.3) .. (0,0) .. controls (3.31,0.3) and (6.95,1.4) .. (10.93,3.29)   ;

\draw (325,142.4) node [anchor=north west][inner sep=0.75pt]    {$w$};
\draw (395,142.4) node [anchor=north west][inner sep=0.75pt]    {$w$};

\end{tikzpicture}
&&&&

\tikzset{every picture/.style={line width=0.75pt}} 

\begin{tikzpicture}[x=0.5pt,y=0.5pt,yscale=-1,xscale=1]

\draw  [line width=1.5]  (180,150) .. controls (180,83.73) and (233.73,30) .. (300,30) .. controls (366.27,30) and (420,83.73) .. (420,150) .. controls (420,216.27) and (366.27,270) .. (300,270) .. controls (233.73,270) and (180,216.27) .. (180,150) -- cycle ;
\draw  [line width=1.5]  (280,150) .. controls (280,138.95) and (288.95,130) .. (300,130) .. controls (311.05,130) and (320,138.95) .. (320,150) .. controls (320,161.05) and (311.05,170) .. (300,170) .. controls (288.95,170) and (280,161.05) .. (280,150) -- cycle ;
\draw   (234.38,150) .. controls (234.38,113.76) and (263.76,84.38) .. (300,84.38) .. controls (336.24,84.38) and (365.63,113.76) .. (365.63,150) .. controls (365.63,186.24) and (336.24,215.63) .. (300,215.63) .. controls (263.76,215.63) and (234.38,186.24) .. (234.38,150) -- cycle ;
\draw  [draw opacity=0][dash pattern={on 0.84pt off 2.51pt}] (355,150) .. controls (355,180.38) and (330.38,205) .. (300,205) .. controls (269.62,205) and (245,180.38) .. (245,150) .. controls (245,119.62) and (269.62,95) .. (300,95) .. controls (320.36,95) and (338.13,106.06) .. (347.64,122.5) -- (300,150) -- cycle ; \draw  [dash pattern={on 0.84pt off 2.51pt}] (355,150) .. controls (355,180.38) and (330.38,205) .. (300,205) .. controls (269.62,205) and (245,180.38) .. (245,150) .. controls (245,119.62) and (269.62,95) .. (300,95) .. controls (320.36,95) and (338.13,106.06) .. (347.64,122.5) ;  
\draw  [dash pattern={on 0.84pt off 2.51pt}]  (347.64,122.5) .. controls (348.6,124.38) and (353.96,137.6) .. (354.88,148.04) ;
\draw [shift={(355,150)}, rotate = 268.21] [color={rgb, 255:red, 0; green, 0; blue, 0 }  ][line width=0.75]    (10.93,-3.29) .. controls (6.95,-1.4) and (3.31,-0.3) .. (0,0) .. controls (3.31,0.3) and (6.95,1.4) .. (10.93,3.29)   ;

\draw (330,139.4) node [anchor=north west][inner sep=0.75pt]    {$w$};

\end{tikzpicture}

\end{matrix}$$
    \caption{Excising an $x$-corridor}
    \label{fig:same_sides_excise_1}
\end{figure}
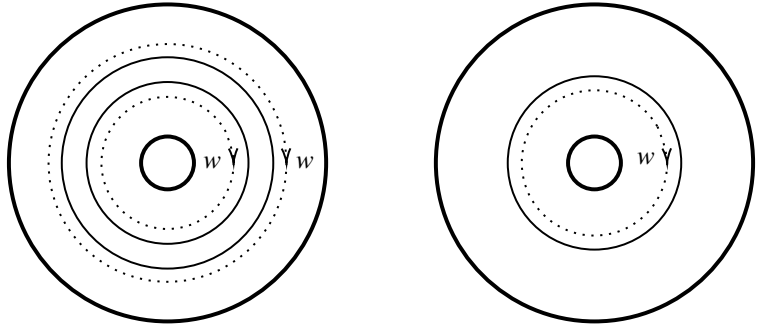

\begin{defn}\label{reduced} A diagram is \textit{reduced} if there are no cells $C_1,C_2$ sharing an edge $E$ such that the words along their boundaries, starting at $E$, are inverses. Two cells $C_1,C_2$ satisfying this property are called \textit{cancelling cells}.  Every word that represents $1$ in a finitely presented group admits a reduced van~Kampen diagram over that presentations.  Every pair of words that represent conjugate elements admits a reduced annular diagram.
\end{defn}

\subsection{Dehn functions}\label{Dehnfn} 

We define $\Area(\Delta)$ to be the number of faces in a van~Kampen diagram $\Delta$.  For a word  $w$ on $X$ representing the identity of $G=\langle X\mid R\rangle$, we define $\Area(w)$ to be the minimum of $\Area(\Delta)$ among all van~Kampen diagrams for $w$.  A word $w$ on $X$ represents the identity in $G$ if and only if $w$ freely equals a product of the form $$\prod_{i=1}^kw_ir_i^{\delta_i}w_i^{-1},$$ where $r_i\in R$, $\delta_i\in \{\pm1\}$, and $w_i$ is a word on $X$ for all $i$.  It follows from the standard proof of van~Kampen's Lemma that for any word $w$ representing the identity in $G$, $\Area(w)$ is also the smallest $k$ such that $w$ freely equals a word of this form. 

Define the \emph{Dehn  function} $\Area : \N \to \N$ of $G$ relative to  a finite presentation $\langle X \mid R \rangle$ for $G$ by $$\Area(n) = \max  \set{ \Area(w) : |w|\leq n \text{ and } w=1 \text{ in  } G}.$$ 
See, for example, \cite{Bridson6, BrH, brady2007geometry} for  more details.

\subsection{Conjugator length functions and annular Dehn functions}\label{CL_and_ann}

Suppose $G$ is a group with finite generating set $X$.  Suppose words $u$ and $v$   on $X$   represent conjugate elements in $G$.  Define $\CL(u,v)$ to be  $\min_{}|\g|$ over all   words $\gamma$ such that $\g u\g^{-1}=v$ in $G$.  

Suppose now that $G=\langle X\mid R\rangle$ is a finitely presented group. By Lemma 3.3 and the argument of Lemma 3.2 in \cite{BRS},  $\CL(u,v)$ is equivalently the minimal length $L$ such that there is a annular diagram  $\Omega$ for $u$ and $v$, as per  Lemma~\ref{annulardiagramlemma}, for which there is an edge-path of length $L$ from $p_1$ to $p_2$.  

Define $\Ann(u,v)$ to be the minimum of  $\Area(\Omega)$  over all over  all annular diagrams  $\Omega$ for $u$ and $v$, or similarly equivalently,  the minimum of $\Area(\g u\g^{-1}v^{-1})$  over all words $\gamma$ such that $\g u\g^{-1}=v$ in $G$.     

A priori, a diagram witnessing $\CL(u,v)$ may not witness $\Ann(u,v)$.

Define the \emph{conjugator length function} $\CL : \N \to \N$ and the \emph{annular Dehn function} $\Ann~:~\N \to~\N$ of $G$ by 
$$\begin{array}{rl}
\CL(n)  \!\!\! & = \ \max \CL(u,v) \\ \Ann(n)  \!\!\! & = \ \max \Ann(u,v),
\end{array}$$
where both maxima are taken over all pairs of words $u$ and $v$ such that $|u|+|v|\leq n$ and $u$ and $v$ represent conjugate elements in $G$. 

Up to $\simeq$,   $\Area(n)$ and $\Ann(n)$ do not depend on the choice of finite presentation for $G$, and $\CL(n)$ does not depend on the choice of finite generating set.  

 We will call on these two technical results. 
\begin{lemma} \label{triangle ineq for A}
    If $u=v=w$ in $G=\langle X\mid R\rangle$, then $$\Area(uv^{-1})-\Area(vw^{-1}) \ \leq \  \Area(uw^{-1}) \ \leq \   \Area(uv^{-1})+\Area(vw^{-1}).$$
\end{lemma}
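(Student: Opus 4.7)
The plan is to establish the right-hand inequality first by a direct van~Kampen diagram construction, and then to obtain the left-hand inequality as a formal consequence by reindexing and using the obvious fact that a word and its inverse have equal area.

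\textbf{Upper bound.} I would start from the free identity
\[
uw^{-1} \ = \ (uv^{-1})(vw^{-1})
\]
in the free group on $X$. Let $\Delta_1$ be a minimal-area van~Kampen diagram for $uv^{-1}$ and $\Delta_2$ a minimal-area van~Kampen diagram for $vw^{-1}$. Pick a basepoint on $\partial \Delta_1$ so that the boundary word read from it is $uv^{-1}$, and likewise a basepoint on $\partial \Delta_2$ so that the boundary word is $vw^{-1}$. Glue $\Delta_1$ and $\Delta_2$ at these two basepoints to form a singular disc diagram $\Delta$ whose boundary word, read from the common basepoint, is the concatenation $uv^{-1}vw^{-1}$. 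The $v^{-1}v$ subword on the boundary is a cancellable backtracking spur, which can be folded away (a standard free-reduction of boundary operation on van~Kampen diagrams) without creating or destroying any $2$-cells. The result is a van~Kampen diagram for $uw^{-1}$ whose face count is $\Area(uv^{-1}) + \Area(vw^{-1})$, giving the right-hand inequality.

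\textbf{Lower bound.} I would now apply the upper bound just proved with the roles of the three words permuted: take the triple $(u,w,v)$ in place of $(u,v,w)$. This yields
\[
\Area(uv^{-1}) \ \leq \ \Area(uw^{-1}) + \Area(wv^{-1}).
\]
Finally, observe that $\Area(wv^{-1}) = \Area(vw^{-1})$: inverting a word corresponds to reflecting a van~Kampen diagram through its basepoint (or equivalently reversing the orientation of the plane), which preserves the number of $2$-cells. Rearranging gives the left-hand inequality $\Area(uv^{-1}) - \Area(vw^{-1}) \leq \Area(uw^{-1})$.

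\textbf{Expected obstacle.} There is no serious obstacle here; the only mild subtlety is the bookkeeping in the gluing step, which is why I phrase it in terms of singular disc diagrams (as already allowed by the definition in Section~\ref{diags}) rather than trying to keep the diagrams homeomorphic to a disc. An alternative presentation, which avoids even this fine point, would be to work with the algebraic characterization of area as the minimal $k$ for which the word freely equals $\prod_{i=1}^k w_i r_i^{\delta_i} w_i^{-1}$: concatenating such expressions for $uv^{-1}$ and $vw^{-1}$ immediately yields one for $uw^{-1}$ of the required length, and the inverse-area equality is visible from reading the product backwards and inverting.
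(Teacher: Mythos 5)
Your proposal is correct and follows the same route as the paper: the upper bound comes from the free identity $uw^{-1}=(uv^{-1})(vw^{-1})$, and the lower bound from applying it with $v$ and $w$ interchanged. You supply slightly more detail than the paper does (the diagram gluing, and the explicit observation that $\Area(wv^{-1})=\Area(vw^{-1})$, which the paper leaves implicit), but the argument is the same.
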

\begin{proof}
    The second inequality follows from the fact that $uw^{-1}$ freely equals $uv^{-1}vw^{-1}$. The first follows from the second after interchanging the roles of $v$ and $w$.
\end{proof}

\begin{lemma}\label{Gluing van Kampen to annular diagram}
    If $u \sim v$ in $G$ and $v=w$ in $G$, then $$\Ann(u,v)-\Area(vw^{-1}) \ \leq \  \Ann(u, w) \ \leq \  \Ann(u,v)+\Area(vw^{-1}).$$
\end{lemma}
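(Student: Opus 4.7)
The plan is to prove the two inequalities by direct diagram-gluing, exploiting the fact that a van~Kampen diagram for $vw^{-1}$ can be attached to an annular diagram along a boundary labelled $v$ to swap that boundary for a new one labelled $w$.

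For the upper bound, I would begin with a minimal-area annular diagram $\Omega$ for the pair $u,v$, so $\Area(\Omega)=\Ann(u,v)$, and a minimal-area van~Kampen diagram $\Delta$ for $vw^{-1}$, so $\Area(\Delta)=\Area(vw^{-1})$. The boundary cycle of $\Delta$ reads $vw^{-1}$ from some basepoint, and so it decomposes into a $v$-labelled arc followed by a $w^{-1}$-labelled arc. Meanwhile, one of the boundary components of $\Omega$ (say the inner one) is a cycle labelled $v$ based at $p_2$. I would identify the $v$-arc of $\partial\Delta$ with this $v$-labelled inner boundary, matching basepoints and orientations. The result is a planar $2$-complex whose outer boundary still reads $u$ and whose new inner boundary reads $w$, hence an annular diagram for $u,w$ whose area is at most $\Area(\Omega)+\Area(\Delta)=\Ann(u,v)+\Area(vw^{-1})$. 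This gives
\[
\Ann(u,w) \ \leq \ \Ann(u,v)+\Area(vw^{-1}).
\]

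For the lower bound, I would apply exactly the same construction with the roles of $v$ and $w$ reversed, using an annular diagram of minimal area for $u,w$ together with a van~Kampen diagram for $wv^{-1}$. Since $(vw^{-1})^{-1}=wv^{-1}$ and reversing and inverting the boundary of a van~Kampen diagram (by reflecting it) yields a van~Kampen diagram of the same area, we have $\Area(wv^{-1})=\Area(vw^{-1})$. The gluing then gives an annular diagram for $u,v$ of area at most $\Ann(u,w)+\Area(vw^{-1})$, so
\[
\Ann(u,v)-\Area(vw^{-1}) \ \leq \ \Ann(u,w),
\]
which is the desired first inequality.

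The only genuinely delicate point is checking that the gluing really produces a valid (planar, correctly labelled) annular diagram: one must verify that identifying the $v$-arc of $\partial\Delta$ with the $v$-cycle on $\partial\Omega$ does not collapse any $2$-cells and yields a $2$-complex with the topological type of an annulus whose face labels remain in $R^{\pm 1}$. This is a routine check once basepoints and orientations are chosen consistently, and no cells of $\Omega$ or $\Delta$ are removed in the process, so the area is exactly additive and the claimed bounds follow.
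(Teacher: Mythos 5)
Your argument is correct and captures the right intuition, but it takes a genuinely different route from the paper. The paper's proof works at the level of words: using the fact recorded in Section~\ref{CL_and_ann} that $\Ann(u,v)=\min_\gamma \Area(\gamma u\gamma^{-1}v^{-1})$ over conjugating words $\gamma$, it picks a $\gamma$ achieving $\Ann(u,v)$, notes $\gamma u\gamma^{-1}=v=w$ in $G$, and then simply invokes Lemma~\ref{triangle ineq for A} (the free equality $\gamma u\gamma^{-1}w^{-1}=(\gamma u\gamma^{-1}v^{-1})(vw^{-1})$) to get $\Ann(u,w)\le\Area(\gamma u\gamma^{-1}w^{-1})\le\Ann(u,v)+\Area(vw^{-1})$, with the reverse inequality by symmetry. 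You instead glue a van~Kampen diagram for $vw^{-1}$ directly onto the $v$-labelled boundary component of an annular diagram $\Omega$ for $(u,v)$. This is the geometric picture underlying the paper's algebraic computation, and your observation that $\Area(wv^{-1})=\Area(vw^{-1})$ (by reflecting and inverting a diagram) gives the lower bound cleanly.

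What the paper's route buys is that it completely sidesteps the topological verification you flag at the end. Your gluing identifies the $v$-labelled \emph{arc} of $\partial\Delta$ with a $v$-labelled \emph{circle} of $\partial\Omega$; this forces the two endpoints of that arc to be identified to the single basepoint $p_2$, so the result is a quotient of $\Delta$ wedged onto $\Omega$ rather than a straightforward surface gluing along matched boundary arcs. It does give a 2-complex with the right Euler characteristic and the right pair of boundary words, but one still has to check it is an annular diagram in the precise sense of Section~\ref{diags} (a spherical 2-complex with two faces removed). A cleaner way to make your geometric picture airtight is to first cut $\Omega$ along an edge-path $\gamma$ from $p_1$ to $p_2$ to obtain a disc diagram for $\gamma u\gamma^{-1}v^{-1}$, glue $\Delta$ arc-to-arc along the $v^{-1}$ segment to get a disc diagram for $\gamma u\gamma^{-1}w^{-1}$, and then re-identify the two $\gamma$-arcs to recover an annular diagram for $(u,w)$; that is precisely the geometric content of the paper's word-level identity. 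So: correct, equivalent in spirit, but the paper's algebraic formulation is the tidier way to discharge the diagrammatic bookkeeping that you (rightly) note is the ``only genuinely delicate point.''
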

\begin{proof}
This follows by a similar argument as the previous lemma, using the fact that $\Ann(u,v)=\Area(\gamma u\gamma ^{-1} v^{-1})$ for some $\gamma$.
\end{proof}

 \subsection{Direct products and free products}
 
The behaviour of $\CL(n)$, $\Ann(n)$ and $\Area(n)$ for direct and free products of finitely generated groups is summarized in the next two propositions, which in large part are from \cite{brick_corson_1998}. We will not call on \eqref{Ann better upper bound}, but we include it here as the natural improvement on the upper bound of \eqref{Ann bad bounds} in circumstances when the optimal conjugator length and annular Dehn function can  be simultaneously realized.

\begin{prop}[]\label{dirprodannular}
Assuming for  \eqref{CL bounds} that $G_1$ and $G_2$ are finitely generated groups,  for  \eqref{Ann bad bounds} and \eqref{Area bounds}   they are finitely presented, and further for \eqref{Area bounds}  that   $\Area_{G_1}(n)$ or $\Area_{G_2}(n)$ grows   $\succeq \! n^2$, we have 
    \begin{flalign}   
    \CL_{G_1\times G_2}(n) & \ \simeq \ \max\{\CL_{G_1}(n),\CL_{G_2}(n)\}, \label{CL bounds} \\ 
          \max \{ \Ann_{G_1}(n), \Ann_{G_2}(n) \}    \  \preceq  \ \Ann_{G_1\times G_2}(n) \hspace*{-13mm} &  \hspace*{13mm}  \ \preceq \   \max\{ n \Ann_{G_1}(n), n \Ann_{G_2}(n)\}+n^2, \label{Ann bad bounds}  \\
           \Area_{G_1\times G_2}(n) & \ \simeq \ \max\{\Area_{G_1}(n),\Area_{G_2}(n)\}.  \label{Area bounds} 
       \end{flalign}
Further, suppose there exists a constant $D$ such that for all $u,v\in G_i$ with $|u|+|v|\leq n$, there exists $w$ where $w uw^{-1}=v$, $|w|\leq D \, \CL_{G_i}(n)+D$, and $\Area(w uw^{-1}v^{-1})\leq D \, \Ann_{G_i}(n)+D$. Then
           \begin{flalign}   
       \Ann_{G_1\times G_2}(n)  \  & \preceq \  \Ann_{G_1}(n) +\Ann_{G_2}(n) + n \CL_{G_1}(n) + n \CL_{G_2}(n).  \label{Ann better upper bound}  
       \end{flalign}
\end{prop}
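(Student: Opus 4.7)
The plan is to work with the standard presentation $\langle X_1\sqcup X_2\mid R_1\cup R_2\cup C\rangle$ for $G_1\times G_2$, where $\langle X_i\mid R_i\rangle$ presents $G_i$ and $C=\{[x_1,x_2]:x_i\in X_i\}$, and to combine two dual tools: a projection argument for the lower bounds, and an explicit sort-and-fill construction for the upper bounds. Throughout, for a word $w$ on $X_1\sqcup X_2$ let $\pi_i(w)$ denote its $X_i$-projection; then $w=\pi_1(w)\pi_2(w)$ in $G_1\times G_2$ via a commutation grid of at most $|\pi_1(w)||\pi_2(w)|\le|w|^2/4$ faces. The structural fact used throughout is that two elements of $G_1\times G_2$ are conjugate if and only if their projections to each factor are conjugate in that factor.

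For the lower bounds in \eqref{CL bounds}, \eqref{Ann bad bounds} and \eqref{Area bounds}, I would use the retractions $G_1\times G_2\twoheadrightarrow G_i$ that send $X_{3-i}$ to the empty word. If $u,v$ are words on $X_i^{\pm 1}$ and $\gamma$ conjugates $u$ to $v$ in $G_1\times G_2$, then $\pi_i(\gamma)$ conjugates them in $G_i$ and $|\pi_i(\gamma)|\le|\gamma|$. Similarly, a van Kampen expression for $w\in X_i^*$ as a product of $k$ conjugates of defining relators in $G_1\times G_2$ projects termwise to one in $G_i$ of length $\le k$, because $R_{3-i}$- and $C$-relators project to the empty word. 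Maximising over witnesses of $\CL_{G_i}(n)$, $\Ann_{G_i}(n)$ and $\Area_{G_i}(n)$ yields the lower halves of all three bounds.

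For the upper bounds, given a conjugate pair $u,v$ with $|u|+|v|\le n$, write $u_i=\pi_i(u)$, $v_i=\pi_i(v)$, and choose a conjugator $w_i$ from $u_i$ to $v_i$ in $G_i$. By commutativity of $X_1$ with $X_2$, $w_1w_2$ conjugates $u$ to $v$ in $G_1\times G_2$. I would construct a van Kampen diagram for $(w_1w_2)u(w_1w_2)^{-1}v^{-1}$ in four stages: (i) sort $u$ to $u_1u_2$ and $v$ to $v_1v_2$ at commutation cost $\le n^2/2$; (ii) slide $u_1$ leftward past $w_2$ (cost $|u_1||w_2|$) and $w_1^{-1}$ rightward past $v_2^{-1}$ (cost $|w_1||v_2|$), producing the word $w_1u_1\cdot B\cdot w_1^{-1}v_1^{-1}$ with $B=w_2u_2w_2^{-1}v_2^{-1}$; (iii) fill $B$ in place with a $G_2$-van Kampen diagram of area $\Area_{G_2}(B)$; (iv) fill the resulting $w_1u_1w_1^{-1}v_1^{-1}$ by a $G_1$-diagram. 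Cutting along the $w_1w_2$-side yields an annular diagram for $(u,v)$. Plugging in the two choices of $w_i$ establishes everything. For \eqref{CL bounds} take $w_i$ realising $\CL_{G_i}$; for \eqref{Area bounds} take $u=v$ so $w_i=1$, with the $n^2$ sorting term absorbed by the hypothesis $\max\Area_{G_i}(n)\succeq n^2$. For \eqref{Ann bad bounds} take $w_i$ to be the conjugator read along a radial path in an optimal annular diagram for $(u_i,v_i)$, giving $|w_i|\le O(n+\Ann_{G_i}(n))$ (since a diagram with $A$ faces has $1$-skeleton diameter $\le O(n+A)$) and fill-area $\le\Ann_{G_i}(n)$. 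For \eqref{Ann better upper bound} invoke the hypothesis to pick $w_i$ with $|w_i|\le O(\CL_{G_i}(n))$ and fill-area $\le O(\Ann_{G_i}(n))$ simultaneously.

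The main obstacle I anticipate is controlling the commutation cost without cross-multiplying the two annular functions. A naive approach that first sweeps the entire $G_2$-block $B$ past $w_1^{-1}v_1^{-1}$ before filling anything would incur cost $|B|(|w_1|+|v_1|)$, which can be as large as $\Ann_{G_1}(n)\,\Ann_{G_2}(n)$---destroying both target bounds. The remedy is stage (iii): filling $B$ \emph{in place}, by inserting the $G_2$-sub-diagram on a slit so that no commutation with the surrounding $G_1$-material is needed. This keeps the multiplicative terms at the desired level $n\cdot|w_i|$ rather than $|w_1|\cdot|w_2|$.
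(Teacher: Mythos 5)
Your proposal is correct and follows essentially the same route as the paper: lower bounds via the retractions onto the factors, and the upper bounds via the conjugator $w_1w_2$, sliding letters across the commutation relators and filling each factor's block in place, with $|w_i|\preceq n+\Ann_{G_i}(n)$ supplied by the Brick--Corson inequality. The paper merely cites Brick--Corson for the lower bound in \eqref{Ann bad bounds} and declares \eqref{CL bounds} and \eqref{Area bounds} straightforward, but its displayed transformation $w_1w_2u_1u_2\to\cdots\to v_1v_2w_1w_2$ is exactly your stages (i)--(iv).
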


\begin{proof} The proofs of (\ref{CL bounds}) and (\ref{Area bounds}) are straight-forward.      

The first inequality of (\ref{Ann bad bounds}) follows directly from Theorem 2.1 of \cite{brick_corson_1998}. For the second, let $u,v\in G_1\times G_2$ be conjugate words such that $|u|+|v|=n$. Then there exist $u_1,v_1\in G_1$ and $u_2, v_2\in G$  such that $u=u_1u_2$ and $v=v_1v_2$ in $G_1\times G_2$, $u_1$ is conjugate to $v_1$ in $G_1$, and $u_1$ is conjugate to $u_2$ in $G_2$. Without loss of generality we may assume $|u_1|+|u_2|+|v_1|+|v_2|=~|u|+|v|=n$, and that our presentation for $G_1\times G_2$ contains a relator $[g_1,g_2]$ for all generators $g_1$ of $G_1$ and $g_2$ of $G_2$. 

For $i=1,2$, let $w_i$ be a word that conjugates $u_i$ to $v_i$ in $G_i$ and has $\Area(w_i u_i w_i^{-1} v_i^{-1}) = \Ann(u_i, v_i)$. Then $w=w_1w_2$ conjugates  $u$ to $v$. Then $|w_i|\leq M\cdot \Ann_{G_i}(n)+n$, where $M$ is the length of the longest defining relation in our presentations for $G_1$ and $G_2$. 
    We can transform $$w_1w_2 u_1u_2  \ \to \  w_1u_1w_2 u_2  \ \to \  w_1u_1 v_2 w_2 \ \to \  v_1 w_1 v_2  w_2 \ \to \  v_1v_2 w_1 w_2,$$
at a cost of applying at most $|w_2|\cdot |u_1|\leq nM\cdot \Ann_{G_2}(n)+n^2$, then $M\cdot \Ann_{G_2}(n)+n$, then $M\cdot \Ann_{G_1}(n)+n^2$, and then  $|v_2|\cdot|w_1|\leq nM\cdot \Ann_{G_1}(n)+n^2$ defining relations. 

The proof of (\ref{Ann better upper bound}) is similar.
\end{proof}

The corresponding result for free products requires additional notation.
\begin{defn}\label{subnegClosure}
    For a finitely presented group $G$, define $\overline{\Ann_{G}}(n)$ to be the minimal integer such that $\overline{\Ann_{G}}(n) \geq \Ann_G(n)$  and  $\overline{\Ann_{G}}(n+\tilde{n})\geq \overline{\Ann_{G}}(n)+\overline{\Ann_{G}}(\tilde{n})$ for all $n, \tilde{n} \in \N$.  
\end{defn}

\begin{prop}[]\label{freeprodannular}
Suppose  for  \eqref{free prod CL bounds}  that $G_1$ and $G_2$ are finitely generated groups, and for \eqref{free prod Ann bounds}   that they are finitely presented,  and for  \eqref{free prod  Area bounds} further suppose that one of $\Area_{G_1}(n)$ and $\Area_{G_2}(n)$ grows   $\succeq \! n$. Let $F_i(n)$ be the smallest function such that $F_i(n)\geq \Ann_{G_i}(n)$ and $F_i(n+n')\geq F_i(n)+F_i(n')$ for $i=1,2$. Then, for $i=1,2$, 
    \begin{flalign}   
    \CL_{G_1 \ast G_2}(n) & \ \simeq \ \max\{\CL_{G_1}(n),\CL_{G_2}(n)\}, \label{free prod CL bounds} \\ 
            \Ann_{G_1 \ast G_2}(n)   & \ \simeq \     \max\{ \overline{\Ann_{G_1}}(n), \overline{\Ann_{G_2}}(n)\}, \label{free prod Ann bounds}  \\
           \Area_{G_1 \ast G_2}(n) & \ \simeq \ \max\{\Area_{G_1}(n),\Area_{G_2}(n)\}.  \label{free prod Area bounds} 
       \end{flalign}
\end{prop}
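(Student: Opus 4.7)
The plan is to prove all three equivalences in parallel by exploiting the normal-form theory and conjugacy classification for free products, reducing questions in $G_1 \ast G_2$ to separate questions in each factor. Upper bounds will come from decomposing a diagram into pieces lying in a single factor, and lower bounds from applying the retraction $r_i \colon G_1 \ast G_2 \twoheadrightarrow G_i$ that kills the other factor (which does not increase area or length on diagrams and words over $G_i$).

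For~\eqref{free prod Area bounds} I would decompose a van~Kampen diagram $\Delta$ over $G_1 \ast G_2$ along its mixed-factor edges into factor sub-diagrams whose boundary lengths sum to $O(|\partial \Delta|)$; combined with the hypothesis that one of $\Area_{G_1}, \Area_{G_2}$ grows at least linearly, this will give the upper bound. The lower bound will come from the retraction. For~\eqref{free prod CL bounds} I would first conjugate $u$ and $v$ to cyclically reduced forms $u^\ast, v^\ast$ by initial/final sub-words of length $\leq n/2$, then apply the conjugacy theorem for free products: either (a) $u^\ast, v^\ast$ both lie in a single factor $G_i$ and are conjugate there via a word of length $\leq \CL_{G_i}(n)$, or (b) they have free-product syllable length $\geq 2$ and are cyclic permutations of each other, realized by a conjugator of length $\leq n$. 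Composition of the two conjugations will yield the upper bound, and retraction will yield the lower.

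For~\eqref{free prod Ann bounds} I would run the same cyclic-reduction strategy, now tracking area. The reduction $u \to u^\ast$ and $v \to v^\ast$ will be realized by an annular strip of area $O(n)$ together with van~Kampen fillings of sub-words of $u$ and $v$ that represent the identity in some factor; by the first Brick and Corson inequality $\Area_{G_i}(n) \leq \Ann_{G_i}(n)$, the combined area of those fillings is bounded by $\sum_j \Ann_{G_i}(n_j)$ over a partition $\sum_j n_j \leq n$, hence by $\overline{\Ann_{G_i}}(n)$. Case (a) then contributes at most $\Ann_{G_i}(n) \leq \overline{\Ann_{G_i}}(n)$, and case (b) contributes $O(n)$, giving the upper bound. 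For the lower bound, the retraction gives $\Ann_{G_1 \ast G_2}(n) \gtrsim \Ann_{G_i}(n)$; since $\Ann_{G_1 \ast G_2}$ is monotone non-decreasing and grows at least linearly when each factor is nontrivial, it is superadditive up to constants, so it will dominate the superadditive hull $\overline{\Ann_{G_i}}(n)$ automatically.

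The main obstacle will be the upper bound in~\eqref{free prod Ann bounds}: carefully bookkeeping all area contributions from the normal-form reduction and verifying that their total lies within $\overline{\Ann_{G_1}}(n) + \overline{\Ann_{G_2}}(n)$. The key technical move is to convert each Dehn-function contribution coming from a sub-word representing the identity of a factor into an annular Dehn-function contribution via $\Area_{G_i} \leq \Ann_{G_i}$, so that the sum over the partition of $n$ yielded by the reduction process is controlled by the superadditive hull of $\Ann_{G_i}$ rather than by a larger function.
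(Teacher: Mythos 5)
Your route is genuinely different from the paper's: the paper obtains \eqref{free prod CL bounds} and \eqref{free prod Ann bounds} simply by citing Corollary~3.3 of Brick and Corson, and disposes of \eqref{free prod Area bounds} with a one-line appeal to the normal form theorem and the retractions $G_1 \ast G_2 \onto G_i$, whereas you propose to reprove the Brick--Corson content directly via syllable decomposition and the conjugacy theorem for free products. Your upper-bound arguments (cyclic reduction by a prefix conjugator of length at most $n$, the dichotomy between conjugacy of single-syllable elements inside a factor and cyclic permutation of syllables, and converting the factor fillings' costs into $\overline{\Ann_{G_i}}$ via $\Area_{G_i}\le\Ann_{G_i}$ summed over a partition of $n$) are the standard ones and are sound in outline.

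There is, however, a genuine gap in your lower bound for \eqref{free prod Ann bounds}. The retraction $G_1\ast G_2 \onto G_i$ gives only $\Ann_{G_1\ast G_2}(n)\succeq \Ann_{G_i}(n)$, and your bridge from there to the superadditive closure --- that $\Ann_{G_1\ast G_2}$ is monotone non-decreasing and grows at least linearly, hence ``superadditive up to constants'' --- rests on a false general principle: a non-decreasing function bounded below by $n$ need not be $\simeq$-equivalent to its superadditive closure. For instance, take $f$ with sparse spikes $f(m_k)=m_k^2$ followed by long stretches on which $f(x)=\max\{x,\,m_k^2\}$ until the next spike at $m_{k+1}=2^{m_k}$; then $\overline{f}(m_k^2)\ge m_k\cdot f(m_k)= m_k^3$, while $Cf(Cm_k^2+m_k^2)+Cm_k^2+C$ is comparable to $m_k^2$, so $\overline{f}\not\preceq f$. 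To prove $\Ann_{G_1\ast G_2}(n)\succeq\overline{\Ann_{G_i}}(n)$ you must use the free product structure itself: for each partition $n=\sum_j n_j$, interleave worst-case conjugate pairs $(u_j,v_j)$ of $G_i$ with letters of the other factor so as to produce a single conjugate pair in $G_1\ast G_2$ for which every annular diagram splits, by the normal form theorem, into disjoint subdiagrams of areas at least $\Ann_{G_i}(n_j)$. Even setting this up needs care: unlike the disk case, concatenating the pairs does not automatically yield conjugate words, since the individual conjugators $w_j$ with $w_ju_jw_j^{-1}=v_j$ need not be compatible, so the test pairs have to be arranged explicitly. This construction is precisely the content of the Brick--Corson corollary that the paper leans on, and it is the piece your proposal is missing.
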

\begin{proof}
For (\ref{free prod CL bounds}) and (\ref{free prod Ann bounds}), the claim is a restatement \cite{brick_corson_1998}[Corollary 3.3]. The upper bound for (\ref{free prod Area bounds}) is straightforward, and the lower bound follows from the existence of retracts $G_1\ast G_2\to G_i$ for $i=1,2$.
\end{proof}

\section{Proof of Theorem~\ref{MainTheorem}} \label{Proof of main thm}

That $G_1= \mathcal{H}_{3}(\ZZ)$ satisfies $\Area_{G_1}(n)\simeq n^3$ is proved in \cite[Chapter~8.1]{epstein1992word}. 
That $\CL_{G_1}(n) \simeq n^2$ is shown in \cite{BRS}. We postpone proof that $\Ann_{G_1}(n)\simeq  n^4$ to Section~\ref{heisensec}.

The Baumslag-Solitar group $G_2$  is well known to have $\Area_{G_2}(n) \simeq 2^n$ (e.g.\  \cite[Chapter~7.4]{epstein1992word}, \cite[Theorem~8.8]{RileyDehn}).  That $\CL_{G_2}(n)\simeq n$  was proved first by \cite{Sale3}; there is an elementary proof in \cite{BRS}.  That $\Ann_{G_2}(n)\simeq 2^n$ then follows from Theorem~\ref{BCineq}.

Our estimates for $G_3$ will follow from  those for $G_1$ and $G_2$ by Proposition~\ref{dirprodannular}.  

We postpone the estimates concerning $G_4$ to Section~\ref{graphofgroups_BS12_and_heisenberg}.

That  $\Area_{G_{5,d}}(n) \simeq n^{d+1}$ is  \cite[Theorem 6.3]{BridsonPittet}.  That $\Area_{G_{6,m}}(n) \preceq n^3$ follows from \cite{GHR} since the groups are class-2 nilpotent.  Inspecting the presentation for $G_{6,m}$ given in Definition~\ref{G6m}, we see that killing all generators other than $a_1$, $b_1$, and $c_1$ retracts $G_{6,m}$ onto a copy of the Heisenberg group $G_1$, and thereby $\Area_{G_{6,m}}(n) \succeq n^3$  follows from $\Area_{G_1}(n)\simeq n^3$.  
That $\CL_{G_{5,d}}(n) \simeq n^{d}$ and $\CL_{G_{6,m}}(n) \simeq n^{m+1}$ are the main results of \cite{BrRi2} and \cite{BrRi1}, respectively. 

The computations of $\Area_{G_7}$, $\Area_{G_8}$, $\CL_{G_7}$, and $\CL_{G_8}$ follow via Proposition~\ref{freeprodannular}. Towards $\Ann_{G_7}(n)\simeq \Ann_{G_8}(n)$, using the notation of Definition~\ref{subnegClosure},    
    \begin{flalign}   \label{Ann n21 upper bound}
\overline{\Ann_{G_{6,20}}}(n) & \succeq \  \Ann_{G_{6,20}}(n) \ \succeq \  n^{21} \  \text{ and } \\  
 n^{21} & \succeq \ \max\{\Ann_{G_{5,3}}(n),\Ann_{G_{5,4}}(n)\}. \label{n21bound}
    \end{flalign}   
where the second inequality of \eqref{Ann n21 upper bound} combines    Theorem~\ref{BCineq}\eqref{CL by Ann} and $\CL_{G_{6,20}}(n) \simeq n^{21}$ discussed above, and  \eqref{n21bound} holds because from Theorem~\ref{BCineq}\eqref{ann by area and cl} applied to the bounds given above tells us that $\Ann_{G_{5,3}}(n) \preceq (n^3)^4$ and  $\Ann_{G_{5,4}}(n) \preceq (n^4)^5$.  Because  $(n+\tilde{n})^{21}\geq n^{21}+\tilde{n}^{21}$ for all $n, \tilde{n} \in \N$,  \eqref{n21bound} gives $n^{21}\succeq \max\left\{\overline{\Ann_{G_{5,3}}}(n),\overline{\Ann_{G_{5,4}}}(n)\right\}$. Since $G_7=G_{5,3}\ast G_{6,20}$ and $G_8=G_{5,4}\ast G_{6,20}$, Proposition~\ref{freeprodannular} then gives $\Ann_{G_7}\simeq \overline{\Ann_{G_{6,20}}}(n)=\Ann_{G_8}$.

Aside from the  estimates postponed to Sections~\ref{heisensec} and \ref{graphofgroups_BS12_and_heisenberg}, this completes our proof of Theorem~\ref{MainTheorem}

\section{The Heisenberg Group}\label{heisensec}

\begin{prop}\label{Heisenberg_upper}
    The annular Dehn function of $G_1$ satisfies $\Ann_{G_1}(n) \simeq n^4$.
\end{prop}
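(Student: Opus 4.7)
My plan is to prove both $\Ann_{G_1}(n) \preceq n^4$ and $\Ann_{G_1}(n) \succeq n^4$ separately. The Brick--Corson inequality (Theorem~\ref{BCineq}) combined with the known estimates $\Area_{G_1}(n)\simeq n^3$ and $\CL_{G_1}(n) \simeq n^2$ yields only the loose bound $\Ann_{G_1}(n) \preceq n^6$, so a finer construction for the upper bound and a delicate counting argument for the lower bound are both needed.

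For the upper bound, I would construct, for any conjugate pair $(u,v)$ with $|u|+|v|\leq n$, an annular diagram of area $O(n^4)$ explicitly, exploiting the Heisenberg commutation structure. Since a central factor plays no role in conjugation, a shortest conjugator $\gamma$ may be chosen of the form $a^pb^q$ with $|p|+|q| \leq Cn^2$. I would build the van~Kampen diagram for $\gamma u \gamma^{-1} v^{-1}$ from structured corridors: a $b$-corridor of length $|q|$ commutes $b^q$ past the $a$-letters of $u$, using at most $|q|\cdot |u| = O(n^3)$ applications of the $[a,b]c^{-1}$ relator, and symmetrically for the $a^p$ part of $\gamma$ and for the $v^{-1}$ side. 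The $c$-edges produced by these commutation corridors (of total count $O(n^3)$) must then be paired with compensating $c^{-1}$-edges produced by the symmetric decomposition of $v^{-1}$, via $c$-corridors made from $[a,c]$ and $[b,c]$ cells. In the worst case, each $c$-corridor traverses a distance $O(|\gamma|) = O(n^2)$, and with $O(|\gamma|) = O(n^2)$ pairings needed, the $c$-corridor cost is $O(n^4)$, which dominates and gives the desired upper bound.

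For the lower bound, I would exhibit a family $(u_n, v_n)$ with $|u_n|+|v_n|\leq Cn$ and $\Ann_{G_1}(u_n, v_n) \geq cn^4$. A natural candidate takes $u_n = a$ and $v_n$ to be the short-form representative (of length $\sim n$) of the element $a \cdot c^{n^2}$, built using the identity $[a^n,b^n] = c^{n^2}$; for this pair the shortest conjugator is $b^{\pm n^2}$ of length $\sim n^2$, realizing the extremal rate of $\CL_{G_1}$. Every annular diagram for such a pair cuts open to a van~Kampen diagram whose projection to the abelianization $\mathbb{Z}^2$ is a ``ribbon'' between the two projected boundary loops. Since the number of $[a,b]c^{-1}$ cells is bounded below by the absolute enclosed area of this projected ribbon, it suffices to show that for the chosen $(u_n, v_n)$ the projection has absolute enclosed area $\gtrsim n^4$ regardless of which conjugator path is used to cut the annulus.

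The hardest part will be the lower bound. The straightforward estimates only yield $\Ann(n) \geq \Area(n) \simeq n^3$, and the sharper $n^4$ bound requires control over the geometry of the projected diagram, ruling out every possible ``efficient'' conjugator path. Since the signed $c$-count of $\gamma u_n \gamma^{-1} v_n^{-1}$ is always zero (the word represents the identity), the improvement must come from absolute-area counting, and from showing that positive and negative contributions to the ribbon's area cannot efficiently cancel for the chosen pair.
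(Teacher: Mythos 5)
Your upper bound plan matches the paper's approach in spirit: choose a conjugator $a^pb^q$ with $|p|+|q|=O(n^2)$ (the paper obtains this from a Bezout-type lemma after passing to Heisenberg normal form), then estimate the area of an explicit diagram for $\gamma u\gamma^{-1}v^{-1}$ by tracking corridors. Your intermediate bookkeeping --- $O(n^3)$ $c$-edges yet $O(n^2)$ pairings, each of length $O(n^2)$ --- is not internally coherent, but the final $O(n^4)$ figure agrees with the paper's cost tally.

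The lower bound plan has a genuine gap. You propose to bound the number of $[a,b]c^{-1}$-cells from below by the absolute enclosed area of the boundary's projection to $\ZZ^2$, and you would need that area to be $\succeq n^4$ for every conjugator path. It is not. Take $\gamma=b^{-n^2}$, which occurs as a cut path in a minimal-area annular diagram (your own upper bound construction produces one). The projection of $\gamma\, a\,\gamma^{-1}\,(a[a^n,b^n])^{-1}$ to $\ZZ^2$ bounds, with winding $\pm 1$, only a $1\times n^2$ strip and an $n\times n$ square, so its absolute enclosed area is about $2n^2$. Your method therefore tops out at a quadratic lower bound. The structural reason is that the relators $[a,c]$ and $[b,c]$ project to degenerate loops in $\ZZ^2$: they contribute nothing to enclosed area, and yet they are the cells that dominate the count in any diagram for this pair. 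So the assertion that "the improvement must come from absolute-area counting" is the step that fails.

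The paper's lower bound is instead a corridor-counting argument aimed precisely at those $[a,c]$-cells. After cutting the annulus along a path crossing each non-contractible $a$-corridor once, there are $n^2$ net radial $a$-corridors $Q_1,\dots,Q_{n^2}$, because any conjugator of $b$ to $bc^{n^2}$ has $a$-exponent exactly $n^2$. One then shows by induction that the word along the side of $Q_i$ equals $bc^i$ in $\langle b,c\rangle$, so $Q_i$ contains at least $i$ cells; summing $\sum_{i=1}^{n^2}i\sim n^4$ gives the bound. The quartic area lives inside the radial $a$-corridors --- exactly where the $\ZZ^2$-projection cannot see. You correctly identified the lower bound as the hard part, but enclosed-area counting in the abelianization is the wrong tool for it.
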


\begin{proof} 
We begin by proving that $\Ann_{G_1}(n) \preceq n^4$. 
Suppose $n>0$ and that $u$ and $v$ are words on the generators of $$G_1  \ = \  \mathcal{H}_3(\ZZ) \ = \  \langle a ,b,c\mid [a,c], [b,c], [a,b]c^{-1} \rangle$$  such that $|u|+|v| \leq n$. By shuffling the $a$-letters to the front (at the expense of creating a $c$ or $c^{-1}$ whenever an $a$ passes a $b$) and shuffling all the $c$-letters to the end, we can transform $u$ to $a^{\alpha_1}b^{\beta_1}c^{\gamma_1}$ and $v$ to $a^{\alpha_2}b^{\beta_2}c^{\gamma_2}$ where $|\alpha_1|+|\alpha_2|+|\beta_1|+|\beta_2|\leq n$ and $|\gamma_1|+|\gamma_2|\leq n^2$ using at most $2n^3$ defining relations. This is the well-known normal form for $G_1$ (see the discussion in \cite{epstein1992word}), so $\alpha_1,\alpha_2,\beta_1,\beta_2,\gamma_1,\gamma_2$ are uniquely determined by $u$ and $v$.

Since $c$ is central in  $G_1$, if $\hat{w} = a^xb^yc^z$ satisfies $\hat{w} u = v \hat{w}$ in $G_1$, then  $w = a^xb^y$ satisfies $w u  =  v w$ in $G_1$. We follow the strategy of \cite{BRS} for finding $x$ and $y$ to suit our purposes. Computing the the normal form of $w u w^{-1}$: \begin{subequations}
\label{all}
\begin{align}
w u w^{-1} & = \  a^xb^ya^{\alpha_1}b^{\beta_1}c^{\gamma_1}b^{-y}a^{-x}\label{eqn0}\\  
& = \  a^xb^ya^{\alpha_1}b^{\beta_1}b^{-y}a^{-x}c^{\gamma_1}\label{eqn1}\\
& = \ a^xa^{\alpha_1}c^{\alpha_1y}b^yb^{\beta_1}b^{-y}a^{-x}c^{\gamma_1}\label{eqn2}\\
& = \ a^xa^{\alpha_1}c^{-\alpha_1y}b^{\beta_1}a^{-x}c^{\gamma_1}\label{eqn3}\\
& = \ a^xa^{\alpha_1}b^{\beta_1}a^{-x}c^{-\alpha_1y+\gamma_1}\label{eqn4}\\
& = \ a^xa^{\alpha_1}a^{-x}b^{\beta_1}c^{-\alpha_1y-\beta_1x+\gamma_1}\label{eqn5}\\
& = \ a^{\alpha_1}b^{\beta_1}c^{-\alpha_1y-\beta_1x+\gamma_1}\label{eqn6},
\end{align}
\end{subequations}
   we see that  $w u  =  v w$ in $G_1$ if and only if $\alpha_1 = \alpha_2$, $\beta_1 = \beta_2$, and \begin{equation} \label{eq: y version}\alpha_1 y + \beta_1 x = \gamma_1 - \gamma_2.\end{equation} If $|\alpha_1|=0=|\beta_1|$, then $u=v$ and we are done; otherwise we have two cases. For the first case, if $|\alpha_1|=|\beta_1|\neq 0$, then $\alpha_1$ divides $\gamma_1-\gamma_2,$ so $x=0,y=(\gamma_1-\gamma_2)/\alpha_1$, satisfies~\eqref{eq: y version} and $|y|\leq 2n^2$. For the second case, suppose $ |\alpha_1|\neq |\beta_1|$. Since there is an automorphism of $G_1$ interchanging $a$ and $b$, we may assume without loss of generality that $|\alpha_1| > |\beta_1|\geq 0$. Let $y' = y +  \lfloor (\gamma_1 - \gamma_2)/\alpha_1 \rfloor$. Then \begin{equation} \label{eq: y' version} \alpha_1 y' + \beta_1 x = \gamma \end{equation} for some $\gamma$ satisfying $|\gamma| \leq |\alpha_1|$.  Equation \eqref{eq: y' version} has a solution $(x,y')$ with $|x|, |y'| \leq \max \set{|\alpha_1|, |\beta_1|}$  by a quantification of Bezout's identity which can be found in  e.g.\ \cite{BFRT-Diophantine, BrRi2, Kornhauser}. Therefore \eqref{eq: y version} has a solution $(x,y)$ with $$|x|  \leq \max \set{|\alpha_1|, |\beta_1|} \ \text{ and }  \ |y|  \leq \max \set{|\alpha_1|, |\beta_1|} + |(\gamma_1 - \gamma_2)/\alpha_1|.$$  Thus there exist $x$ and $y$ such that $a^xb^yub^{-y}a^{-x}=v$, $|x| \leq n$, and $|y| \leq 2n^2$.

The equality (\ref{eqn1}) holds using $|\gamma_1|(|x|+|y|)\leq Cn^4$ defining relations of the form $[a,c]=1$ and $[b,c]=1$, (\ref{eqn2}) using $|\alpha_1|^2|y| \leq 2n^4$ defining relations of the form $[a,b]c^{-1}=1$, and (\ref{eqn3}) holds freely. Also, (\ref{eqn4}) holds using $|\beta_1||x|^2 \leq n^3$ defining relations of the form $[b,c]=1$, (\ref{eqn5}) holds using $|x||\beta_1|^2 \leq n^3$ defining relations of the form $[a,b]c^{-1}=1$, and (\ref{eqn6}) holds freely. Summing  gives   $\Ann_{G_1}(n) \preceq n^4$, as desired.

    We now turn to proving that $\Ann_{G_1}(n) \succeq n^4$.  For $n \in \N$, let $u_n=b$ and $v_n=b[a^n,b^n]$. Then $|u_n|+|v_n| = 4n+2$   and $u_n$ and $v_n$ are conjugate in $G_1$: 
    \begin{equation} \label{un and uv conjugate}
    v_n   \ = \ a^{n^2}u_na^{-n^2}.
    \end{equation}

We will show that $\Ann(b,bc^{n^2}) \succeq n^4$. By applying  defining relations for $G_1$ approximately $n^3$ times, $v_n$ can be shown to equal $bc^{n^2}$ in $G_1$, so, via Lemma~\ref{Gluing van Kampen to annular diagram}, it will follow that    $\Ann(u_n,v_n) \succeq n^4$, which will prove that  $\Ann_{G_1}(n) \succeq n^4$.
   
  Let $\Omega$ be any annular diagram witnessing the conjugacy of $u_n$ and $v_n$, and let $\gamma$ be any path from the starting point of $u_n$ to the starting point of $v_n$ such that $\gamma$ crosses every non-contractible $a$-corridor precisely once. Let $w$ be the word along $\gamma$. By cutting $\Omega$ along $\gamma$, we obtain a van Kampen diagram $\Delta$ for $w b w^{-1} (bc^{n^2})^{-1}$ such that $\Area(\Delta)=\Area(\Omega)$.  Say that an $a$-corridor in $\Delta$ is \emph{positively} (resp.\ \emph{negatively}) \emph{oriented} and {vertical} if it connects an $a$ (resp.\ an $a^{-1}$) in the $w$-portion of $\partial \Delta$ to an $a^{-1}$ (resp.\ an $a$) in the $w^{-1}$-portion.         (There may also be $a$-corridors that connect an $a$ to an $a^{-1}$ that are both in the $w$-portion or are both in the $w^{-1}$-portion, and there may be $a$-corridors in $\Delta$ may form annuli.)       

In \cite{BRS}, it is shown that the exponent sum of the  $a$-letters in any word $w$ conjugating $b$ to $bc^{n^2}$  is precisely $n^2$. It follows that there are precisely $n^2$ more positively-oriented than negatively-oriented vertical corridors in $\Delta$.

    There thus exist $n^2$ positively-oriented vertical $a$-corridors $Q_1,Q_2,\ldots,Q_{n^2}$, between any two of which, there are an equal number of positively-oriented and negatively-oriented  vertical $a$-corridors. Let $\beta_i$ and $\alpha_{i+1}$ be the words along the sides of $Q_i$, respectively, as shown in Figure~\ref{fig: Heisenberg_min_area_diagram}. Since $\gamma$ intersects each non-contractible $a$-corridor only once, the words $w_i$ between $Q_{i-1}$ and $Q_{i}$ along the top and bottom of $\Delta$ are the same for all $i$.

    \begin{figure}
        \centering
\tikzset{every picture/.style={line width=0.75pt}} 

\begin{tikzpicture}[x=0.5pt,y=0.5pt,yscale=-1,xscale=1]

\draw   (542.5,300) -- (542.5,100) -- (67.5,100) -- (67.5,300) -- cycle ;
\draw   (461.72,100.02) -- (461.33,300.02) -- (420.62,299.98) -- (421.01,99.98) -- cycle ;
\draw    (461.07,140) -- (420.36,139.96) ;
\draw    (461.07,180) -- (420.36,179.96) ;
\draw   (312.43,100) -- (312.05,300) -- (271.34,299.96) -- (271.72,99.96) -- cycle ;
\draw    (311.79,140) -- (271.07,139.96) ;
\draw    (311.79,180) -- (271.07,179.96) ;
\draw   (189.82,100.02) -- (189.44,300.02) -- (149.19,299.98) -- (149.58,99.98) -- cycle ;
\draw    (189.64,140) -- (148.93,139.96) ;
\draw    (189.64,180) -- (148.93,179.96) ;
\draw    (108.21,100) -- (119.79,100) ;
\draw [shift={(121.79,100)}, rotate = 180] [color={rgb, 255:red, 0; green, 0; blue, 0 }  ][line width=0.75]    (10.93,-3.29) .. controls (6.95,-1.4) and (3.31,-0.3) .. (0,0) .. controls (3.31,0.3) and (6.95,1.4) .. (10.93,3.29)   ;
\draw    (108.21,300) -- (119.79,300) ;
\draw [shift={(121.79,300)}, rotate = 180] [color={rgb, 255:red, 0; green, 0; blue, 0 }  ][line width=0.75]    (10.93,-3.29) .. controls (6.95,-1.4) and (3.31,-0.3) .. (0,0) .. controls (3.31,0.3) and (6.95,1.4) .. (10.93,3.29)   ;
\draw    (223.57,100) -- (235.14,100) ;
\draw [shift={(237.14,100)}, rotate = 180] [color={rgb, 255:red, 0; green, 0; blue, 0 }  ][line width=0.75]    (10.93,-3.29) .. controls (6.95,-1.4) and (3.31,-0.3) .. (0,0) .. controls (3.31,0.3) and (6.95,1.4) .. (10.93,3.29)   ;
\draw    (223.57,300) -- (235.14,300) ;
\draw [shift={(237.14,300)}, rotate = 180] [color={rgb, 255:red, 0; green, 0; blue, 0 }  ][line width=0.75]    (10.93,-3.29) .. controls (6.95,-1.4) and (3.31,-0.3) .. (0,0) .. controls (3.31,0.3) and (6.95,1.4) .. (10.93,3.29)   ;
\draw    (359.29,100) -- (370.86,100) ;
\draw [shift={(372.86,100)}, rotate = 180] [color={rgb, 255:red, 0; green, 0; blue, 0 }  ][line width=0.75]    (10.93,-3.29) .. controls (6.95,-1.4) and (3.31,-0.3) .. (0,0) .. controls (3.31,0.3) and (6.95,1.4) .. (10.93,3.29)   ;
\draw    (359.29,300) -- (370.86,300) ;
\draw [shift={(372.86,300)}, rotate = 180] [color={rgb, 255:red, 0; green, 0; blue, 0 }  ][line width=0.75]    (10.93,-3.29) .. controls (6.95,-1.4) and (3.31,-0.3) .. (0,0) .. controls (3.31,0.3) and (6.95,1.4) .. (10.93,3.29)   ;
\draw    (162.5,100) -- (174.07,100) ;
\draw [shift={(176.07,100)}, rotate = 180] [color={rgb, 255:red, 0; green, 0; blue, 0 }  ][line width=0.75]    (10.93,-3.29) .. controls (6.95,-1.4) and (3.31,-0.3) .. (0,0) .. controls (3.31,0.3) and (6.95,1.4) .. (10.93,3.29)   ;
\draw    (162.5,300) -- (174.07,300) ;
\draw [shift={(176.07,300)}, rotate = 180] [color={rgb, 255:red, 0; green, 0; blue, 0 }  ][line width=0.75]    (10.93,-3.29) .. controls (6.95,-1.4) and (3.31,-0.3) .. (0,0) .. controls (3.31,0.3) and (6.95,1.4) .. (10.93,3.29)   ;
\draw    (284.64,100) -- (296.21,100) ;
\draw [shift={(298.21,100)}, rotate = 180] [color={rgb, 255:red, 0; green, 0; blue, 0 }  ][line width=0.75]    (10.93,-3.29) .. controls (6.95,-1.4) and (3.31,-0.3) .. (0,0) .. controls (3.31,0.3) and (6.95,1.4) .. (10.93,3.29)   ;
\draw    (284.64,300) -- (296.21,300) ;
\draw [shift={(298.21,300)}, rotate = 180] [color={rgb, 255:red, 0; green, 0; blue, 0 }  ][line width=0.75]    (10.93,-3.29) .. controls (6.95,-1.4) and (3.31,-0.3) .. (0,0) .. controls (3.31,0.3) and (6.95,1.4) .. (10.93,3.29)   ;
\draw    (433.93,100) -- (445.5,100) ;
\draw [shift={(447.5,100)}, rotate = 180] [color={rgb, 255:red, 0; green, 0; blue, 0 }  ][line width=0.75]    (10.93,-3.29) .. controls (6.95,-1.4) and (3.31,-0.3) .. (0,0) .. controls (3.31,0.3) and (6.95,1.4) .. (10.93,3.29)   ;
\draw    (433.93,300) -- (445.5,300) ;
\draw [shift={(447.5,300)}, rotate = 180] [color={rgb, 255:red, 0; green, 0; blue, 0 }  ][line width=0.75]    (10.93,-3.29) .. controls (6.95,-1.4) and (3.31,-0.3) .. (0,0) .. controls (3.31,0.3) and (6.95,1.4) .. (10.93,3.29)   ;
\draw    (495,100) -- (506.57,100) ;
\draw [shift={(508.57,100)}, rotate = 180] [color={rgb, 255:red, 0; green, 0; blue, 0 }  ][line width=0.75]    (10.93,-3.29) .. controls (6.95,-1.4) and (3.31,-0.3) .. (0,0) .. controls (3.31,0.3) and (6.95,1.4) .. (10.93,3.29)   ;
\draw    (495,300) -- (506.57,300) ;
\draw [shift={(508.57,300)}, rotate = 180] [color={rgb, 255:red, 0; green, 0; blue, 0 }  ][line width=0.75]    (10.93,-3.29) .. controls (6.95,-1.4) and (3.31,-0.3) .. (0,0) .. controls (3.31,0.3) and (6.95,1.4) .. (10.93,3.29)   ;
\draw    (162.5,140) -- (174.07,140) ;
\draw [shift={(176.07,140)}, rotate = 180] [color={rgb, 255:red, 0; green, 0; blue, 0 }  ][line width=0.75]    (10.93,-3.29) .. controls (6.95,-1.4) and (3.31,-0.3) .. (0,0) .. controls (3.31,0.3) and (6.95,1.4) .. (10.93,3.29)   ;
\draw    (162.5,180) -- (174.07,180) ;
\draw [shift={(176.07,180)}, rotate = 180] [color={rgb, 255:red, 0; green, 0; blue, 0 }  ][line width=0.75]    (10.93,-3.29) .. controls (6.95,-1.4) and (3.31,-0.3) .. (0,0) .. controls (3.31,0.3) and (6.95,1.4) .. (10.93,3.29)   ;
\draw    (284.64,140) -- (296.21,140) ;
\draw [shift={(298.21,140)}, rotate = 180] [color={rgb, 255:red, 0; green, 0; blue, 0 }  ][line width=0.75]    (10.93,-3.29) .. controls (6.95,-1.4) and (3.31,-0.3) .. (0,0) .. controls (3.31,0.3) and (6.95,1.4) .. (10.93,3.29)   ;
\draw    (284.64,180) -- (296.21,180) ;
\draw [shift={(298.21,180)}, rotate = 180] [color={rgb, 255:red, 0; green, 0; blue, 0 }  ][line width=0.75]    (10.93,-3.29) .. controls (6.95,-1.4) and (3.31,-0.3) .. (0,0) .. controls (3.31,0.3) and (6.95,1.4) .. (10.93,3.29)   ;
\draw    (433.93,140) -- (445.5,140) ;
\draw [shift={(447.5,140)}, rotate = 180] [color={rgb, 255:red, 0; green, 0; blue, 0 }  ][line width=0.75]    (10.93,-3.29) .. controls (6.95,-1.4) and (3.31,-0.3) .. (0,0) .. controls (3.31,0.3) and (6.95,1.4) .. (10.93,3.29)   ;
\draw    (433.93,180) -- (445.5,180) ;
\draw [shift={(447.5,180)}, rotate = 180] [color={rgb, 255:red, 0; green, 0; blue, 0 }  ][line width=0.75]    (10.93,-3.29) .. controls (6.95,-1.4) and (3.31,-0.3) .. (0,0) .. controls (3.31,0.3) and (6.95,1.4) .. (10.93,3.29)   ;
\draw    (67.5,200) -- (67.5,203) ;
\draw [shift={(67.5,205)}, rotate = 270] [color={rgb, 255:red, 0; green, 0; blue, 0 }  ][line width=0.75]    (10.93,-3.29) .. controls (6.95,-1.4) and (3.31,-0.3) .. (0,0) .. controls (3.31,0.3) and (6.95,1.4) .. (10.93,3.29)   ;
\draw    (148.93,190) -- (148.93,203) ;
\draw [shift={(148.93,205)}, rotate = 270] [color={rgb, 255:red, 0; green, 0; blue, 0 }  ][line width=0.75]    (10.93,-3.29) .. controls (6.95,-1.4) and (3.31,-0.3) .. (0,0) .. controls (3.31,0.3) and (6.95,1.4) .. (10.93,3.29)   ;
\draw    (189.64,190) -- (189.64,203) ;
\draw [shift={(189.64,205)}, rotate = 270] [color={rgb, 255:red, 0; green, 0; blue, 0 }  ][line width=0.75]    (10.93,-3.29) .. controls (6.95,-1.4) and (3.31,-0.3) .. (0,0) .. controls (3.31,0.3) and (6.95,1.4) .. (10.93,3.29)   ;
\draw    (542.5,200) -- (542.5,203) ;
\draw [shift={(542.5,205)}, rotate = 270] [color={rgb, 255:red, 0; green, 0; blue, 0 }  ][line width=0.75]    (10.93,-3.29) .. controls (6.95,-1.4) and (3.31,-0.3) .. (0,0) .. controls (3.31,0.3) and (6.95,1.4) .. (10.93,3.29)   ;
\draw    (271.07,190) -- (271.07,203) ;
\draw [shift={(271.07,205)}, rotate = 270] [color={rgb, 255:red, 0; green, 0; blue, 0 }  ][line width=0.75]    (10.93,-3.29) .. controls (6.95,-1.4) and (3.31,-0.3) .. (0,0) .. controls (3.31,0.3) and (6.95,1.4) .. (10.93,3.29)   ;
\draw    (311.79,190) -- (311.79,203) ;
\draw [shift={(311.79,205)}, rotate = 270] [color={rgb, 255:red, 0; green, 0; blue, 0 }  ][line width=0.75]    (10.93,-3.29) .. controls (6.95,-1.4) and (3.31,-0.3) .. (0,0) .. controls (3.31,0.3) and (6.95,1.4) .. (10.93,3.29)   ;
\draw    (420.36,190) -- (420.36,203) ;
\draw [shift={(420.36,205)}, rotate = 270] [color={rgb, 255:red, 0; green, 0; blue, 0 }  ][line width=0.75]    (10.93,-3.29) .. controls (6.95,-1.4) and (3.31,-0.3) .. (0,0) .. controls (3.31,0.3) and (6.95,1.4) .. (10.93,3.29)   ;
\draw    (461.07,190) -- (461.07,203) ;
\draw [shift={(461.07,205)}, rotate = 270] [color={rgb, 255:red, 0; green, 0; blue, 0 }  ][line width=0.75]    (10.93,-3.29) .. controls (6.95,-1.4) and (3.31,-0.3) .. (0,0) .. controls (3.31,0.3) and (6.95,1.4) .. (10.93,3.29)   ;

\draw (167.96,74.4) node [anchor=north west][inner sep=0.75pt]    {$Q_{1}$};
\draw (549.75,182.4) node [anchor=north west][inner sep=0.75pt]  [font=\small]  { $b$};
\draw (126.18,187.4) node [anchor=north west][inner sep=0.75pt]  [font=\small]  {$\beta _{1}$};
\draw (200.79,187.4) node [anchor=north west][inner sep=0.75pt]  [font=\small]  {$\alpha _{2}$};
\draw (242.89,187.4) node [anchor=north west][inner sep=0.75pt]  [font=\small]  {$\beta _{2}$};
\draw (323.14,187.4) node [anchor=north west][inner sep=0.75pt]  [font=\small]  {$\alpha _{3}$};
\draw (159.69,115.87) node [anchor=north west][inner sep=0.75pt]  [font=\small]  {$a$};
\draw (159.69,147.87) node [anchor=north west][inner sep=0.75pt]  [font=\small]  {$a$};
\draw (159.69,187.87) node [anchor=north west][inner sep=0.75pt]  [font=\small]  {$a$};
\draw (378.93,187.4) node [anchor=north west][inner sep=0.75pt]  [font=\small]  {$\beta _{n^{2} -1}$};
\draw (473.32,187.4) node [anchor=north west][inner sep=0.75pt]  [font=\small]  {$\alpha _{n^{2}}$};
\draw (283.32,74.4) node [anchor=north west][inner sep=0.75pt]    {$Q_{2}$};
\draw (425.54,74.4) node [anchor=north west][inner sep=0.75pt]    {$Q_{n^{2}}$};
\draw (22,182.4) node [anchor=north west][inner sep=0.75pt]  [font=\small]  {$bc{^{n^2}}$};
\draw (228.5,107.4) node [anchor=north west][inner sep=0.75pt]  [font=\small]  {$w_{1}$};
\draw (228.5,272.4) node [anchor=north west][inner sep=0.75pt]  [font=\small]  {$w_{1}$};
\draw (96.86,107.4) node [anchor=north west][inner sep=0.75pt]  [font=\small]  {$w_{0}$};
\draw (92.79,272.4) node [anchor=north west][inner sep=0.75pt]  [font=\small]  {$w_{0}$};
\draw (491.32,271.4) node [anchor=north west][inner sep=0.75pt]  [font=\small]  {$w_{n^{2}}$};
\draw (498.11,107.4) node [anchor=north west][inner sep=0.75pt]  [font=\small]  {$w_{n^{2}}$};
\draw (153.14,227.4) node [anchor=north west][inner sep=0.75pt]    {$\vdots $};
\draw (284.55,117.37) node [anchor=north west][inner sep=0.75pt]  [font=\small]  {$a$};
\draw (284.55,149.37) node [anchor=north west][inner sep=0.75pt]  [font=\small]  {$a$};
\draw (281.83,189.37) node [anchor=north west][inner sep=0.75pt]  [font=\small]  {$a$};
\draw (285.26,207.37) node [anchor=north west][inner sep=0.75pt]    {$\vdots $};
\draw (431.12,115.87) node [anchor=north west][inner sep=0.75pt]  [font=\small]  {$a$};
\draw (431.12,147.87) node [anchor=north west][inner sep=0.75pt]  [font=\small]  {$a$};
\draw (431.12,187.87) node [anchor=north west][inner sep=0.75pt]  [font=\small]  {$a$};
\draw (431.83,205.87) node [anchor=north west][inner sep=0.75pt]    {$\vdots $};
\draw (351,192.4) node [anchor=north west][inner sep=0.75pt]    {$...$};

\end{tikzpicture}
        \caption{A van Kampen diagram $\Delta$ for $wbw^{-1}(bc^{n^2})^{-1}$}
        \label{fig: Heisenberg_min_area_diagram}
    \end{figure}
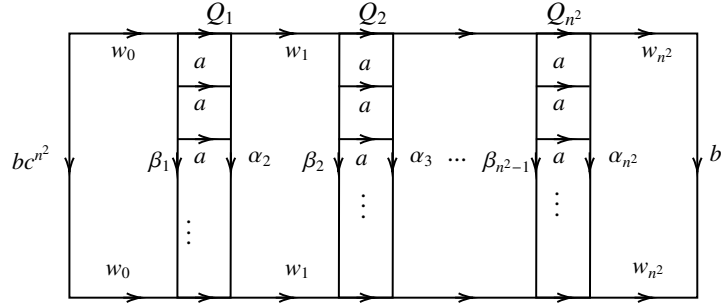

For all $0 \leq i \leq n^2$, the exponent sum  of the $a^{\pm 1}$ in $w_i$ is $0$, so $w_i\in \langle b,c\rangle=\ZZ^2$. This group is abelian, and $\alpha_i$ is a word on $\{b,c\}$, so $w_i$ commutes with $\alpha_i$. Thus, $\beta_i=\alpha_{i-1}$, and proceeding inductively from $\alpha_1=b$ we see that $\alpha_i=\beta_{i-1}^a=bc^i$. Every word on $\{b,c\}$ equal to $bc^i$ has at least $i$ many $c$-letters, therefore $Q_i$ at least $i$ many cells. So $Q_1,\ldots,Q_{n^2}$ have total area at least $1 + 2 + \cdots + (n^2-1) + n^2 = n^2(n^2+1)/2$, giving the desired quartic lower bound.
\end{proof}

\section{Computations of $\Area_{G_4}(n)$, $\CL_{G_4}(n)$, and $\Ann_{G_4}(n)$}\label{graphofgroups_BS12_and_heisenberg}

We begin with a lemma describing the structure of $G_4$.

\begin{lemma}\label{structureG4}  
The group   $$G_4 \ = \ \langle a,b,c,d,s\mid [a,b]c^{-1}, [a,c], [b,c], [b,d], s^a s^{-2}, s^d   s^{-2} \rangle$$
is a free product with amalgamation $A *_C B$ of its subgroups  $A=\langle a,b,c,d\rangle$ and $B=\langle a,d,s\rangle$ along the subgroup $C=\langle a,d\rangle$.  It is also an HNN-extension of $$E   \ = \   \langle a, c,d,s \mid  [a,c],  s^a s^{-2}, s^d   s^{-2} \rangle$$  with stable letter $b$ and an HNN-extension of  $$\langle a,b,c,s\mid [a,b]c^{-1}, [a,c], [b,c],  s^a s^{-2}  \rangle$$ with stable letter $d$.  Moreover, 
    \begin{enumerate}
        \item \label{A subgroup property} $A = \langle a,b,c,d \mid [a,b]c^{-1}, [a,c], [b,c], [b,d]  \rangle$  is the HNN-extension of $K =  \langle a, c,d  \mid   [a,c]   \rangle \cong \ZZ^2 * \ZZ$ with stable letter $b$ acting via the automorphism of $K$ that maps  $a \mapsto c^{-1}a$,  $c \mapsto c$, and $d \mapsto d$;
        \item  \label{B subgroup property} $B = \langle a, d,s\mid  s^a s^{-2}, s^d   s^{-2} \rangle$, the amalgamated free product of two copies of $\BS(1,2)$, denoted $D_1=\langle a,s\mid s^a=s^2\rangle$ and $D_2=\langle d,s\mid s^d=s^2\rangle$, along   $\langle s \rangle \cong \Z$.
           \item \label{C subgroup property} $C = \langle a, d \mid \ \rangle$ is free of rank 2.
        \item \label{Heisenberg retract} Killing $d$ and $s$ retracts $G_4$ onto a subgroup $\langle a ,b,c\mid [a,c], [b,c], [a,b]c^{-1} \rangle$, which is the Heisenberg group.
          \item \label{BS retract} Killing $b$ and $c$ and mapping $d \mapsto a$ retracts $G_4$ onto its subgroup $D_1 = \langle a, s \mid  s^as^{-2} \rangle$.
        \item \label{L property} The subgroup $L=\langle b,c,s\rangle$ is $\ZZ^2 \ast \ZZ =\langle b,c,s \mid [b,c] \rangle$.      
    \end{enumerate}
\end{lemma}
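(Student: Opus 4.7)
The plan is to verify each claim by exhibiting an appropriate partition of the defining relators of $G_4$ and confirming the required subgroup embeddings via explicit retractions, after which standard presentation theorems for amalgamated products and HNN-extensions apply.

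First I would observe that the six relators split as $R_A = \{[a,b]c^{-1}, [a,c], [b,c], [b,d]\}$ (involving only $a,b,c,d$) and $R_B = \{s^a s^{-2}, s^d s^{-2}\}$ (involving only $a,d,s$), with shared generators exactly $a,d$. They also split according to whether they involve $b$: $[a,b]c^{-1}, [b,c], [b,d]$ rewrite as the conjugation rules $bab^{-1} = c^{-1}a$, $bcb^{-1} = c$, $bdb^{-1} = d$, while the remaining three relators present $E$; this gives the candidate HNN-extension with stable letter $b$. A symmetric partition by whether each relator involves $d$ produces the candidate HNN-extension with stable letter $d$, encoded by $dbd^{-1} = b$ and $dsd^{-1} = s^2$.

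Next I would establish (1)--(3). For (1), the relators of $A$ directly present it as the HNN-extension of $K = \langle a,c,d \mid [a,c]\rangle \cong \ZZ^2 \ast \ZZ$ with stable letter $b$, where the associated map sends $a \mapsto c^{-1}a$, $c \mapsto c$, $d \mapsto d$ and is an automorphism of $K$ because it restricts to a unimodular automorphism of $\langle a,c\rangle \cong \ZZ^2$ and fixes $\langle d\rangle$. For (2), the presentation of $B$ is visibly the pushout of $D_1 = \langle a,s \mid s^a s^{-2}\rangle$ and $D_2 = \langle d,s \mid s^d s^{-2}\rangle$ over $\langle s\rangle \cong \ZZ$. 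For (3), $C = \langle a,d\rangle$ is free of rank $2$ in both $A$ and $B$: in $A$, since $a$ and $d$ lie in $K$ in distinct free factors of $\ZZ^2 \ast \ZZ$; in $B$, since $a \in D_1 \setminus \langle s\rangle$ and $d \in D_2 \setminus \langle s\rangle$ (checked by the retraction $s \mapsto 1$ of $B$ onto $F_2$), so alternating words in $a^{\pm 1}, d^{\pm 1}$ are in normal form for the amalgamated product.

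With (1)--(3) in hand, the amalgamation $G_4 = A \ast_C B$ follows from the standard presentation theorem applied to the partition $R_A \cup R_B$ together with the two embeddings of $C$. For the HNN-extension with stable letter $b$, one checks $\langle a,c,d\rangle \hookrightarrow E$ as $\ZZ^2 \ast \ZZ$ using the retraction $E \to \langle a,c,d \mid [a,c]\rangle$ obtained by $s \mapsto 1$, which splits the inclusion. An analogous argument identifies the base of the second HNN-extension as an amalgamated product $\mathcal{H}_3(\ZZ) \ast_{\langle a\rangle} \BS(1,2)$ and verifies that the associated subgroups $\langle b,s\rangle$ and $\langle b,s^2\rangle$ are each free of rank $2$. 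Finally, (4) and (5) are retraction checks: sending $d, s \mapsto 1$ trivializes $[b,d], s^a s^{-2}, s^d s^{-2}$ and preserves the Heisenberg relators; sending $b, c \mapsto 1$ and $d \mapsto a$ reduces the presentation to that of $D_1$. For (6), since $G_4 = A \ast_C B$, both $\langle b,c\rangle \leq A$ and $\langle s\rangle \leq B$ intersect $C$ trivially (by Britton's lemma inside the HNN-extension $A$, and via the retraction $s \mapsto 1$ in $B$), while $b$ and $c$ commute in $A$, so $\langle b,c\rangle \cong \ZZ^2$; the amalgamated-product normal form then delivers $\langle b,c,s\rangle \cong \ZZ^2 \ast \ZZ$. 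No single step looks to be the main obstacle; rather, the work is spread across these routine embedding and retraction verifications.
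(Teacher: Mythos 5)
Your proposal is correct and follows essentially the same route as the paper: retractions split the inclusions of $A$, $B$, $C$, and the Heisenberg and $\BS(1,2)$ subgroups, and the standard presentation theorems for amalgamated products and HNN-extensions, together with normal-form arguments, do the rest. The only real divergence is in establishing the HNN-structure over the stable letter $d$: you identify the base group as $\mathcal{H}_3(\ZZ)\ast_{\langle a\rangle}\BS(1,2)$ and verify freeness of $\langle b,s\rangle$ and $\langle b,s^2\rangle$ directly there, whereas the paper deduces it from claim (6), i.e., from $L=\langle b,c,s\rangle\cong\ZZ^2\ast\ZZ$ inside $G_4=A\ast_C B$ --- both work.
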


\begin{proof}
Killing $s$ retracts $G_4$ onto $A$, which therefore has the presentation claimed in \eqref{A subgroup property}, and the remaining claims of  \eqref{A subgroup property} then follow, as does the claim that $G_4$ is an HNN-extension with stable letter $b$.   Similarly killing $b$ and $c$ retracts $G_4$ onto $B$ so as to give  \eqref{B subgroup property}.  That $G_4 =  A *_C B$ follows from \eqref{A subgroup property} and \eqref{B subgroup property}.  Killing $b$ and $c$ retracts $A$ onto $C = \langle a, d \mid \ \rangle$, giving \eqref{C subgroup property}.  That $G_4\cong A *_C B$ follows.  Claims \eqref{Heisenberg retract} and \eqref{BS retract} are straight-forwardly verified.  
Claim \eqref{L property} holds on account of a free-product-with-amalgamation normal form for $A *_C B$, given that  $[b,c]=1$ in $G_4$ and  $\langle b,c \rangle \cap C = \langle s \rangle \cap C = \set{1}$.     It follows  that $b$ and $s$ generate a free subgroup and mapping $b \mapsto b$ and $s \mapsto s^2$ defines an isomorphism between rank-2 free subgroups of $G_4$, and so $G_4$ an HNN-extension with stable letter $d$.  
\end{proof}

\begin{cor}\label{G4Cor} ${}$ 
\vspace*{-2mm}
\begin{enumerate}
    \item\label{ABUndistorted} $A$ and $B$ are undistorted in $G_4$.
    \item\label{conjInA} If $x,y\in A$ are conjugate in $G_4$, then they are conjugate in $A$.
    \item \label{conjInB} If $x,y\in B$ are conjugate in $G_4$, then they are conjugate in $B$.
\end{enumerate}
\end{cor}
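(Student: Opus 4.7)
The plan is to derive all three claims from a single elementary source: the existence of two homomorphic retractions $\pi_A : G_4 \twoheadrightarrow A$ and $\pi_B : G_4 \twoheadrightarrow B$. Once these are in hand, the conjugacy claims (2) and (3) reduce to a one-line calculation, bypassing the conjugacy theorem for amalgamated free products that one might otherwise expect to invoke.

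First I would construct $\pi_A$ by extending $a,b,c,d\mapsto a,b,c,d$ and $s\mapsto 1$. By Lemma~\ref{structureG4}\eqref{A subgroup property}, the four relators $[a,b]c^{-1},[a,c],[b,c],[b,d]$ of $G_4$ are defining relators of $A$, while the remaining two relators $s^as^{-2}$ and $s^ds^{-2}$ map to the identity under the substitution $s\mapsto 1$. Thus $\pi_A$ is a well-defined homomorphism, and since $A=\langle a,b,c,d\rangle$ it fixes $A$ pointwise. Symmetrically, the assignment $a,d,s\mapsto a,d,s$ and $b,c\mapsto 1$ defines a retraction $\pi_B : G_4\twoheadrightarrow B$: the first four relators of $G_4$ each contain $b$ or $c$ and so are killed, while $s^as^{-2}$ and $s^ds^{-2}$ are defining relators of $B$ by Lemma~\ref{structureG4}\eqref{B subgroup property}.

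For (1), let $g\in A$ and let $w$ be any word of length $\ell$ in the $G_4$-generators representing $g$. Applying $\pi_A$ letter by letter produces a word $\pi_A(w)$ of length at most $\ell$ in the $A$-generators representing $\pi_A(g)=g$, so $|g|_A\leq |g|_{G_4}$; the reverse inequality is clear because the $A$-generators are a subset of the $G_4$-generators. Hence $A\hookrightarrow G_4$ is isometric, and in particular undistorted; the same argument via $\pi_B$ handles $B$. For (2), suppose $x,y\in A$ and $g\in G_4$ satisfy $gxg^{-1}=y$. Applying $\pi_A$ and using $\pi_A(x)=x$, $\pi_A(y)=y$ yields $\pi_A(g)\,x\,\pi_A(g)^{-1}=y$ with $\pi_A(g)\in A$, so $x$ and $y$ are conjugate in $A$. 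Claim (3) follows identically using $\pi_B$. I do not anticipate a real obstacle here; the only thing to get right is the routine check that the two substitutions respect all six defining relators of $G_4$.
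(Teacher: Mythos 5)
Your proposal is correct and takes essentially the same route as the paper: the paper's proof is the single sentence that the corollary is a consequence of $A$ and $B$ being retracts of $G_4$, and the retractions you construct (killing $s$; killing $b,c$) are precisely those already recorded in Lemma~\ref{structureG4}. You merely spell out what the paper leaves implicit, including the (correct, if stronger than necessary) observation that these retractions are nonexpansive on generators and hence give isometric embeddings of $A$ and $B$.
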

 
\begin{proof}
These properties are consequences of   $A$ and $B$ being  retracts of $G_4$.
\end{proof}

\begin{cor}\label{cor_base}
 $2^n\preceq\Area_{G_4}(n)$ and  $n^2\preceq \CL_{G_4}(n)$.
\end{cor}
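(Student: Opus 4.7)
The plan is to derive both lower bounds from the retraction structure recorded in Lemma~\ref{structureG4}, together with known estimates for the Heisenberg group and $\BS(1,2)$.

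First I would invoke the general principle that Dehn functions and conjugator length functions are monotonic under retraction: if $H$ is a retract of a finitely presented group $G$, with inclusion $i: H \hookrightarrow G$ and retraction $\rho: G \twoheadrightarrow H$, then $\Area_H(n) \preceq \Area_G(n)$ and $\CL_H(n) \preceq \CL_G(n)$. Indeed, a word $w$ in the generators of $H$ represents $1$ in $H$ if and only if it does in $G$, and applying $\rho$ letter-by-letter to a minimal van~Kampen diagram for $w$ over a presentation of $G$ produces a van~Kampen diagram for $w$ over a presentation of $H$ whose area is bounded by $\Area_G(w)$ times a constant (the maximum area in $H$ of the image of a defining relator of $G$). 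The same idea applied to a minimal conjugator gives the $\CL$ statement.

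By Lemma~\ref{structureG4}\eqref{BS retract}, the group $D_1 = \BS(1,2)$ is a retract of $G_4$. Combined with the fact recalled in Section~\ref{Proof of main thm} that $\Area_{\BS(1,2)}(n) \simeq 2^n$, this gives
\[
\Area_{G_4}(n) \ \succeq \ \Area_{\BS(1,2)}(n) \ \simeq \ 2^n,
\]
which is the first asserted bound.

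For the second bound I would use Lemma~\ref{structureG4}\eqref{Heisenberg retract}: the Heisenberg group $G_1 = \mathcal{H}_3(\ZZ)$ is a retract of $G_4$. Since $\CL_{G_1}(n) \simeq n^2$ (as recalled from \cite{BRS} in Section~\ref{Proof of main thm}), the retraction principle yields
\[
\CL_{G_4}(n) \ \succeq \ \CL_{G_1}(n) \ \simeq \ n^2.
\]
There is no real obstacle here; the content lies entirely in the retract structure already established in Lemma~\ref{structureG4} and in the previously cited Dehn and conjugator length estimates for $\BS(1,2)$ and $\mathcal{H}_3(\ZZ)$. The genuinely difficult work is in the matching upper bounds and in the lower bound on $\Ann_{G_4}(n)$, which will be addressed by the later Propositions~\ref{Dehn exp upper bound G4}, \ref{G4_CL_Upper}, and \ref{lower bound on anng4 prop}.
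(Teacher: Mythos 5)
Your proposal is correct and follows essentially the same route as the paper: both invoke the retractions of $G_4$ onto $\BS(1,2)$ and onto $\mathcal{H}_3(\ZZ)$ from Lemma~\ref{structureG4}, together with the known lower bounds $\Area_{\BS(1,2)}(n) \succeq 2^n$ and $\CL_{\mathcal{H}_3(\ZZ)}(n) \succeq n^2$. The paper is terser, taking the monotonicity of $\Area$ and $\CL$ under retraction for granted, whereas you spell it out, but the content is identical.
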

\begin{proof}
    We  noted that $2^n\preceq \Area_{\BS(1,2)}(n)$  and $ n^2 \preceq\CL_{\mathcal{H}_3(\ZZ)}(n)$  in Section~\ref{Proof of main thm}.  By \eqref{BS retract} and  \eqref{Heisenberg retract}, respectively, of  Lemma~\ref{structureG4}, $D_1 \cong \BS(1,2)$ and $\mathcal{H}_3(\ZZ)$ are retracts of $G_4$.  The claimed   bounds follow.
\end{proof}

  Next we will prove that $\Area_{G_4}(n) \preceq 2^n$ by means of the following three lemmas.  We write $\text{exp}_x(u)$ for the exponent sum of the letters $x$ in a word $u$, and $|u|_x$ for the number of letters $x^{\pm 1}$.

\begin{lemma} \label{length detail} If $g \in \langle a ,c,d \rangle \leq G_4$ and $|g|_{G_4} =n$, then there is a word $\sigma$ on $a,c,d$ such that $g=\sigma$ in $G_4$, $| \sigma |_a + | \sigma |_d  \leq n$, and  $| \sigma |_c  \leq 3 n^2$.  
\end{lemma}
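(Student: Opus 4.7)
The plan is to start from a word $w$ of length $n$ on $\{a,b,c,d,s\}^{\pm 1}$ representing $g$, and reduce it in two stages. First, apply the retraction $G_4 \twoheadrightarrow A$ that sends $s \mapsto 1$ and fixes the other generators; this is well-defined because the only defining relations of $G_4$ involving $s$ are $s^a s^{-2}$ and $s^d s^{-2}$, both of which become trivial under $s \mapsto 1$. Deleting the $s^{\pm 1}$-letters from $w$ yields a word $w_1$ on $\{a,b,c,d\}^{\pm 1}$ of length at most $n$, still representing $g$.

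Second, exploit the structure $A = K \rtimes_\phi \ZZ$ furnished by Lemma~\ref{structureG4}\eqref{A subgroup property}: here $K = \langle a,c,d \mid [a,c]\rangle$ and $\phi$ is the automorphism of $K$ with $\phi(a) = c^{-1}a$, $\phi(c) = c$, $\phi(d) = d$ (so $bxb^{-1} = \phi(x)$ for $x \in K$). Writing $w_1 = y_0 b^{\epsilon_1} y_1 \cdots b^{\epsilon_m} y_m$ with each $y_i$ a word on $\{a,c,d\}^{\pm 1}$, iteration of the identity $b^\epsilon y = \phi^\epsilon(y)\, b^\epsilon$ pushes every $b^{\pm 1}$ to the right:
\[
w_1 \,=\, \sigma \cdot b^E, \qquad \sigma \,=\, y_0\, \phi^{E_1}(y_1)\cdots\phi^{E_m}(y_m),
\]
where $E_i = \epsilon_1 + \cdots + \epsilon_i$ and $E = E_m$. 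Since $g \in K$ and the decomposition $k \cdot b^j$ in the semidirect product is unique, $E = 0$ and $\sigma = g$ in $G_4$.

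The final step is to bound letter counts in $\sigma$. Because $\phi$ fixes $c$ and $d$ and satisfies $\phi^k(a^{\pm 1}) = c^{\mp k}a^{\pm 1}$, applying $\phi^{E_i}$ to $y_i$ preserves the tallies of $a^{\pm 1}$ and $d^{\pm 1}$ while adding $|E_i|\cdot |y_i|_a$ new $c^{\pm 1}$-letters. Summing over $i$ gives $|\sigma|_a + |\sigma|_d = |w_1|_a + |w_1|_d \leq n$ and
\[
|\sigma|_c \,=\, |w_1|_c + \sum_{i=1}^{m} |E_i|\cdot |y_i|_a \,\leq\, |w_1|_c + m\cdot |w_1|_a \,\leq\, n + n^2 \,\leq\, 3n^2,
\]
valid for $n \geq 1$ (the case $n = 0$ is trivial with $\sigma$ empty).

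The only delicate point is tracking precisely how many $c$-letters are produced when $b$-letters commute past $a$-letters; this is where the $n^2$ term (and hence the stated $3n^2$ bound) arises. There is no deeper obstacle, as the argument is essentially the semidirect-product normal-form computation for $A = K \rtimes_\phi \ZZ$.
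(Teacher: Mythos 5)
Your argument is correct, and it reaches the conclusion by a genuinely different and more elementary route than the paper. The paper fixes $\sigma$ to be the canonical $\ZZ^2 \ast \ZZ$ normal form $\sigma_0 d^{\delta_1}\sigma_1\cdots d^{\delta_m}\sigma_m$ of $g$ in $\langle a,c,d\rangle$ and then bounds its letter counts diagrammatically: it takes a reduced van~Kampen diagram for $\sigma\tau^{-1}$ over $A$ (with $\tau$ a geodesic on $a,b,c,d$), shows by an innermost-corridor argument that no $a$- or $d$-corridor joins two letters of $\sigma$ (giving $|\sigma|_a+|\sigma|_d\leq n$), and then cuts along $d$-corridors into Heisenberg-type subdiagrams to control the $c$-letters. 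You instead work purely algebraically with the semidirect-product structure $A = K\rtimes_\phi\ZZ$ from Lemma~\ref{structureG4}\eqref{A subgroup property} (legitimate, since the HNN-extension there has associated subgroup all of $K$ and $\phi$ an automorphism), shuffling all $b$-letters to the right and reading off the $c$-cost from $\phi^{k}(a^{\pm1}) = c^{\mp k}a^{\pm1}$; the retraction killing $a,c,d$ forces the residual power of $b$ to vanish. Your version is constructive and shorter, at the price of producing some word with the stated bounds rather than bounding the normal form itself --- but the lemma, and both places it is invoked (Propositions~\ref{Dehn exp upper bound G4} and \ref{G4_CL_Upper}), only require existence, so nothing is lost. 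All the estimates check out: $|\sigma|_a+|\sigma|_d = |w_1|_a+|w_1|_d\leq n$ and $|\sigma|_c \leq |w_1|_c + m\,|w_1|_a \leq n+n^2\leq 3n^2$ for $n\geq 1$.
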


\begin{proof}
  Let $\tau$ be a minimal length word on $a, b,c,d$ representing $g$.  Then $|\tau|=n$, because $g \in A$ and killing $s$ retracts $G_4$ onto $A$.
 
 The subgroup $\langle a ,c,d \rangle$ is  $\Z^2 \ast \Z$ with $a$ and $c$ generating the $\Z^2$-factor and $d$ the $\Z$-factor.  Since $g\in \langle a,c,d\rangle$, there exists a representative word $\sigma$ of $g$ of the form $\sigma = \sigma_0 d^{\delta_1} \sigma_1 \cdots  d^{\delta_m} \sigma_m$ where  $\delta_i \neq 0$ and $\sigma_i = a^{\alpha_i} c^{\gamma_i}$ for $i=0,\ldots, m$, and  $\sigma_j \neq 1$ for $j=1,\ldots, m-1$.  Since $\sigma=\tau$ in $A$, there exists a reduced van~Kampen diagram $\Delta$ for $\sigma \tau^{-1}$ over $A$. 
 
 We claim that there is no  $a$- or $d$-corridor connecting an $a$ to an $a^{-1}$ or a $d$ to a $d^{-1}$ in $\sigma$. Suppose, for a contradiction, that there is such a corridor $Q$.  Let  $\sigma'$ be the subword of $\sigma$ whose first and last letters label the edges at the ends of $Q$.  Then $Q$ bounds a subdiagram $\Delta'$ of $\Delta$. No $a$-corridor can cross a $d$-corridor, so there is an \emph{innermost} such $Q$---that is, a $Q$ such that no  $a$- or $d$-corridors connects an $a$ to an $a^{-1}$ or a $d$ to a  $d^{-1}$ in $\sigma'$. The first and last edges of $Q$ cannot be contained in the same subword $\sigma_i$, nor in the same subword $d^{\delta_i}$, since $\sigma_i$ and $d^{\delta_i}$  are both freely reduced. So, if $Q$ is an $a$-corridor, then $\sigma'$ contains at least one $d$-letter, and if $Q$ is a $d$-corridor then $\sigma'$ contains at least one $a$-letter. This $a$- or $d$-letter in $\sigma'$ must be part of an $a$- or $d$-corridor $Q'$. However $Q'$ can neither cross $Q$ nor connect two letters of $\sigma'$. This contradiction proves our claim. 
 Thus, every $a$ or $d$ in $\sigma$ is connected to an  $a$ or $d$ in $\tau$ by an $a$- or $d$-corridor in $\Delta$.    So $| \sigma |_a + | \sigma |_d  \leq n$.

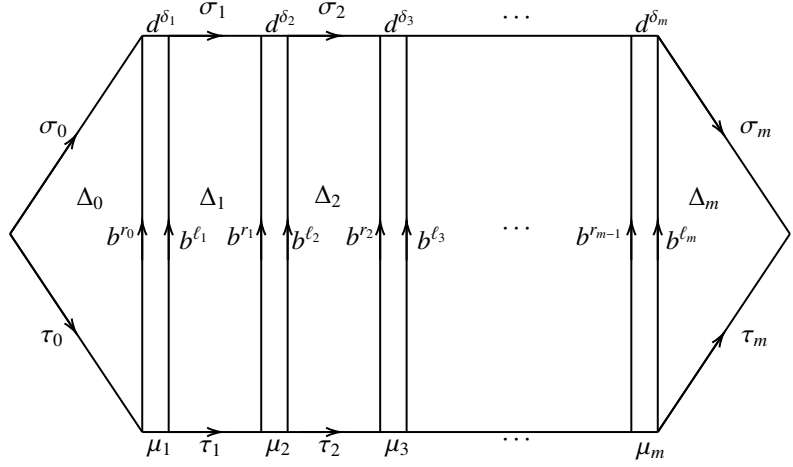
\begin{figure}
    \centering

\tikzset{every picture/.style={line width=0.75pt}} 

\begin{tikzpicture}[x=0.5pt,y=0.5pt,yscale=-1,xscale=1]

\draw    (60,200) -- (160,50) ;
\draw    (550,50) -- (160,50) ;
\draw    (650,200) -- (550,50) ;
\draw    (160,350) -- (60,200) ;
\draw    (550,350) -- (650,200) ;
\draw    (550,350) -- (160,350) ;
\draw    (160,50) -- (160,350) ;
\draw    (180,50) -- (180,350) ;
\draw    (250,50) -- (250,350) ;
\draw    (270,50) -- (270,350) ;
\draw    (530,50) -- (530,350) ;
\draw    (550,50) -- (550,350) ;
\draw    (340,50) -- (340,350) ;
\draw    (360,50) -- (360,350) ;
\draw    (60,200) -- (108.89,126.66) ;
\draw [shift={(110,125)}, rotate = 123.69] [color={rgb, 255:red, 0; green, 0; blue, 0 }  ][line width=0.75]    (10.93,-3.29) .. controls (6.95,-1.4) and (3.31,-0.3) .. (0,0) .. controls (3.31,0.3) and (6.95,1.4) .. (10.93,3.29)   ;
\draw    (60,200) -- (108.89,273.34) ;
\draw [shift={(110,275)}, rotate = 236.31] [color={rgb, 255:red, 0; green, 0; blue, 0 }  ][line width=0.75]    (10.93,-3.29) .. controls (6.95,-1.4) and (3.31,-0.3) .. (0,0) .. controls (3.31,0.3) and (6.95,1.4) .. (10.93,3.29)   ;
\draw    (550,350) -- (598.89,276.66) ;
\draw [shift={(600,275)}, rotate = 123.69] [color={rgb, 255:red, 0; green, 0; blue, 0 }  ][line width=0.75]    (10.93,-3.29) .. controls (6.95,-1.4) and (3.31,-0.3) .. (0,0) .. controls (3.31,0.3) and (6.95,1.4) .. (10.93,3.29)   ;
\draw    (550,50) -- (598.89,123.34) ;
\draw [shift={(600,125)}, rotate = 236.31] [color={rgb, 255:red, 0; green, 0; blue, 0 }  ][line width=0.75]    (10.93,-3.29) .. controls (6.95,-1.4) and (3.31,-0.3) .. (0,0) .. controls (3.31,0.3) and (6.95,1.4) .. (10.93,3.29)   ;
\draw    (180,50) -- (218,50) ;
\draw [shift={(220,50)}, rotate = 180] [color={rgb, 255:red, 0; green, 0; blue, 0 }  ][line width=0.75]    (10.93,-3.29) .. controls (6.95,-1.4) and (3.31,-0.3) .. (0,0) .. controls (3.31,0.3) and (6.95,1.4) .. (10.93,3.29)   ;
\draw    (270,50) -- (297,50) -- (308,50) ;
\draw [shift={(310,50)}, rotate = 180] [color={rgb, 255:red, 0; green, 0; blue, 0 }  ][line width=0.75]    (10.93,-3.29) .. controls (6.95,-1.4) and (3.31,-0.3) .. (0,0) .. controls (3.31,0.3) and (6.95,1.4) .. (10.93,3.29)   ;
\draw    (280,350) -- (308,350) ;
\draw [shift={(310,350)}, rotate = 180] [color={rgb, 255:red, 0; green, 0; blue, 0 }  ][line width=0.75]    (10.93,-3.29) .. controls (6.95,-1.4) and (3.31,-0.3) .. (0,0) .. controls (3.31,0.3) and (6.95,1.4) .. (10.93,3.29)   ;
\draw    (180,350) -- (218,350) ;
\draw [shift={(220,350)}, rotate = 180] [color={rgb, 255:red, 0; green, 0; blue, 0 }  ][line width=0.75]    (10.93,-3.29) .. controls (6.95,-1.4) and (3.31,-0.3) .. (0,0) .. controls (3.31,0.3) and (6.95,1.4) .. (10.93,3.29)   ;
\draw    (530,220) -- (530,192) ;
\draw [shift={(530,190)}, rotate = 90] [color={rgb, 255:red, 0; green, 0; blue, 0 }  ][line width=0.75]    (10.93,-3.29) .. controls (6.95,-1.4) and (3.31,-0.3) .. (0,0) .. controls (3.31,0.3) and (6.95,1.4) .. (10.93,3.29)   ;
\draw    (550,220) -- (550,192) ;
\draw [shift={(550,190)}, rotate = 90] [color={rgb, 255:red, 0; green, 0; blue, 0 }  ][line width=0.75]    (10.93,-3.29) .. controls (6.95,-1.4) and (3.31,-0.3) .. (0,0) .. controls (3.31,0.3) and (6.95,1.4) .. (10.93,3.29)   ;
\draw    (340,220) -- (340,192) ;
\draw [shift={(340,190)}, rotate = 90] [color={rgb, 255:red, 0; green, 0; blue, 0 }  ][line width=0.75]    (10.93,-3.29) .. controls (6.95,-1.4) and (3.31,-0.3) .. (0,0) .. controls (3.31,0.3) and (6.95,1.4) .. (10.93,3.29)   ;
\draw    (360,220) -- (360,192) ;
\draw [shift={(360,190)}, rotate = 90] [color={rgb, 255:red, 0; green, 0; blue, 0 }  ][line width=0.75]    (10.93,-3.29) .. controls (6.95,-1.4) and (3.31,-0.3) .. (0,0) .. controls (3.31,0.3) and (6.95,1.4) .. (10.93,3.29)   ;
\draw    (250,220) -- (250,192) ;
\draw [shift={(250,190)}, rotate = 90] [color={rgb, 255:red, 0; green, 0; blue, 0 }  ][line width=0.75]    (10.93,-3.29) .. controls (6.95,-1.4) and (3.31,-0.3) .. (0,0) .. controls (3.31,0.3) and (6.95,1.4) .. (10.93,3.29)   ;
\draw    (270,220) -- (270,192) ;
\draw [shift={(270,190)}, rotate = 90] [color={rgb, 255:red, 0; green, 0; blue, 0 }  ][line width=0.75]    (10.93,-3.29) .. controls (6.95,-1.4) and (3.31,-0.3) .. (0,0) .. controls (3.31,0.3) and (6.95,1.4) .. (10.93,3.29)   ;
\draw    (160,220) -- (160,192) ;
\draw [shift={(160,190)}, rotate = 90] [color={rgb, 255:red, 0; green, 0; blue, 0 }  ][line width=0.75]    (10.93,-3.29) .. controls (6.95,-1.4) and (3.31,-0.3) .. (0,0) .. controls (3.31,0.3) and (6.95,1.4) .. (10.93,3.29)   ;
\draw    (180,220) -- (180,192) ;
\draw [shift={(180,190)}, rotate = 90] [color={rgb, 255:red, 0; green, 0; blue, 0 }  ][line width=0.75]    (10.93,-3.29) .. controls (6.95,-1.4) and (3.31,-0.3) .. (0,0) .. controls (3.31,0.3) and (6.95,1.4) .. (10.93,3.29)   ;

\draw (80,112.4) node [anchor=north west][inner sep=0.75pt]    {$\sigma _{0}$};
\draw (201,22.4) node [anchor=north west][inner sep=0.75pt]    {$\sigma _{1}$};
\draw (377,20.4) node [anchor=north west][inner sep=0.75pt]    {$\ $};
\draw (531,28.4) node [anchor=north west][inner sep=0.75pt]    {$d^{\delta _{m}} \ $};
\draw (341,28.4) node [anchor=north west][inner sep=0.75pt]    {$d^{\delta _{3}} \ $};
\draw (161,28.4) node [anchor=north west][inner sep=0.75pt]    {$d^{\delta _{1}} \ $};
\draw (291,22.4) node [anchor=north west][inner sep=0.75pt]    {$\sigma _{2}$};
\draw (81,270.4) node [anchor=north west][inner sep=0.75pt]    {$\tau _{0}$};
\draw (201,354.4) node [anchor=north west][inner sep=0.75pt]    {$\tau _{1}$};
\draw (610,112.4) node [anchor=north west][inner sep=0.75pt]    {$\sigma _{m}$};
\draw (611,270.4) node [anchor=north west][inner sep=0.75pt]    {$\tau _{m}$};
\draw (131,192.4) node [anchor=north west][inner sep=0.75pt]    {$b^{r_{0}}$};
\draw (186,192.4) node [anchor=north west][inner sep=0.75pt]    {$b^{\ell _{1}}$};
\draw (554,192.4) node [anchor=north west][inner sep=0.75pt]    {$b^{\ell _{m}}$};
\draw (430,32.4) node [anchor=north west][inner sep=0.75pt]    {$\cdots $};
\draw (430,192.4) node [anchor=north west][inner sep=0.75pt]    {$\cdots $};
\draw (430,352.4) node [anchor=north west][inner sep=0.75pt]    {$\cdots $};
\draw (532,355.4) node [anchor=north west][inner sep=0.75pt]    {$\mu _{m} \ $};
\draw (342,353.4) node [anchor=north west][inner sep=0.75pt]    {$\mu _{3}$};
\draw (162,353.4) node [anchor=north west][inner sep=0.75pt]    {$\mu _{1} \ $};
\draw (252,353.4) node [anchor=north west][inner sep=0.75pt]    {$\mu _{2} \ $};
\draw (108,162.4) node [anchor=north west][inner sep=0.75pt]    {$\Delta _{0}$};
\draw (201,162.4) node [anchor=north west][inner sep=0.75pt]    {$\Delta _{1}$};
\draw (571,162.4) node [anchor=north west][inner sep=0.75pt]    {$\Delta _{m}$};
\draw (251,28.4) node [anchor=north west][inner sep=0.75pt]    {$d^{\delta _{2}} \ $};
\draw (222,192.4) node [anchor=north west][inner sep=0.75pt]    {$b^{r_{1}}$};
\draw (271,192.4) node [anchor=north west][inner sep=0.75pt]    {$b^{\ell _{2}}$};
\draw (312,192.4) node [anchor=north west][inner sep=0.75pt]    {$b^{r_{2}}$};
\draw (366,192.4) node [anchor=north west][inner sep=0.75pt]    {$b^{\ell _{3}}$};
\draw (486,192.4) node [anchor=north west][inner sep=0.75pt]    {$b^{r_{m-1}}$};
\draw (288,160.4) node [anchor=north west][inner sep=0.75pt]    {$\Delta _{2}$};
\draw (291,354.4) node [anchor=north west][inner sep=0.75pt]    {$\tau _{2}$};

\end{tikzpicture}

\caption{The van~Kampen diagram $\Delta$ for $\sigma \tau^{-1}$.  }
    \label{fig:acd diagram}
\end{figure}

 Per Figure~\ref{fig:acd diagram}, for $i=1,\ldots, m-1$, let $\Delta_i$ be the subdiagram of $\Delta$   between the pair of $d$-corridors starting at the last letter of $d^{\delta_{i}}$ and the first letter of $d^{\delta_{i+1}}$. Let $\Delta_0$ be the subdiagram to the left of  
the $d$-corridor starting at the first $d$-letter of $d^{\delta_1}$ and let $\Delta_m$ be that 
to the right of the $d$-corridor starting at the last $d$-letter of $d^{\delta_m}$.   
Thereby, $\sigma = \sigma_0  d^{\delta_{1}} \sigma_1   \cdots  d^{\delta_{m}} \sigma_m$ and  $\tau = \tau_0 \mu_1 \tau_1   \cdots  \mu_m \tau_m $ (as words) so that   $\Delta_i$ is a subdiagram for $b^{\ell_{i}} \sigma_i  b^{-r_{i}}  \tau_i^{-1}$ where the $b^{\ell_i}$ and $b^{r_i}$ are words along the sides of $d$-corridors, as shown, and $\ell_0=r_m=0$.

Now, for all $i$, as witnessed by the subdiagram of $\Delta$ to the left of the path labelled by $b^{\ell_i}$, we have that $b^{\ell_i} = \hat{\tau}_i^{-1}\hat{\sigma}_i$ and $b^{r_i} = \bar{\tau}_i^{-1}\bar{\sigma}_i$ in $G_4$ for some  prefixes  $\hat{\sigma}_i$ and $\hat{\tau}_i$ and suffixes    $\bar{\sigma}_i$ and $\bar{\tau}_i$  of $\sigma$ and $\tau$, respectively.    So mapping $A \onto \langle b \rangle = \Z$ by killing $a,c,d$ gives that
$|\ell_i|+|r_i| \leq n$.

 Let $\overline{\tau_i}$ be $\tau_i$ with all $d$-letters removed.
Because  $\sigma_i   = b^{\ell_{i}}  \tau_i b^{-r_{i}}$ in $G_4$, we learn  from Lemma~\ref{structureG4}\eqref{Heisenberg retract} that $\sigma_i   = b^{\ell_{i}}  \overline{\tau_i} b^{-r_{i}}$  in $\mathcal{H}_3(\ZZ)$.
 Therefore $$|\sigma_i| \ \leq \ |\tau_i|_c +  \abs{\text{exp}_a ( \overline{\tau_i})}  \,  \abs{\text{exp}_b (b^{\ell_i} \overline{\tau_i}  b^{-r_{i}})}.$$ The result follows.
\end{proof}

\begin{lemma} \label{no b-rings}
Every word $w$ representing the identity $G_4$ admits a reduced van~Kampen diagram with no $b$-rings. Further, every pair of words representing conjugate $G_4$ admits a reduced annular diagram with no contractible $b$- or $d$-rings. 
\end{lemma}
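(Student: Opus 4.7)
The plan is a surgery argument exploiting the subgroup structure of $G_4$ laid out in Lemma~\ref{structureG4}.  Two observations are key.  First, the only $G_4$-relators involving $b^{\pm 1}$ are $[a,b]c^{-1}$, $[b,c]$, and $[b,d]$, so both sides of every $b$-corridor spell words on $\{a,c,d\}$; by Lemma~\ref{structureG4}\eqref{A subgroup property} these represent elements of the subgroup $\langle a,c,d\rangle\leq G_4$, which is isomorphic to $K=\langle a,c,d\mid[a,c]\rangle\cong\ZZ^2\ast\ZZ$.  Second, the only $G_4$-relators involving $d^{\pm 1}$ are $[b,d]$ and $s^ds^{-2}$, so both sides of every $d$-corridor spell words on $\{b,s\}$; by Lemma~\ref{structureG4}\eqref{L property} these represent elements of the free rank-$2$ subgroup $\langle b,s\rangle\leq G_4$ (which sits inside $L=\langle b,c\rangle\ast\langle s\rangle$).

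Among all reduced (van~Kampen or annular) diagrams for the given boundary data, let $\Delta$ minimize the combined count $\mu(\Delta)$ defined to be the number of $b$-cells plus the number of $d$-cells of $\Delta$.  We claim $\Delta$ has no $b$-ring (and, in the annular case, no contractible $d$-ring).  Suppose for a contradiction $\Delta$ contains a $b$-ring, and take one that is innermost, $R$; let $\Delta'$ be the disk it bounds.  Its inner boundary is a word on $\{a,c,d\}$ and carries no $b$-edge, so any $b$-corridor inside $\Delta'$ would close into a $b$-ring inside $\Delta'$, contradicting the innermostness of $R$; hence $\Delta'$ contains no $b$-cells.  The closed disk $D=R\cup\Delta'$ has boundary $\beta$, a word on $\{a,c,d\}$ equal to $1$ in $G_4$, hence in $K$.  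Replace $D$ by any van~Kampen diagram for $\beta$ over the one-relator presentation $\langle a,c,d\mid[a,c]\rangle$ of $K$:  built from $[a,c]$-cells alone, it contributes no $b$-cells and no $d$-cells.  Free-reducing the result, which cannot raise any cell count, yields a reduced diagram with $\mu$ strictly smaller than $\mu(\Delta)$---at least one $b$-cell of $R$ was excised and nothing of either type was added---contradicting the choice of $\Delta$.

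The parallel argument removes contractible $d$-rings in annular diagrams, with $\{b,s\}$ and the free group $\langle b,s\rangle$ in place of $\{a,c,d\}$ and $K$: the corresponding $\beta$ is freely trivial, so $D$ is replaced by a cell-free subcomplex---a planar tree embedded in the disk that pairs up the freely cancelling letters of $\beta$---again strictly decreasing $\mu$.  The main point requiring care is that the two surgeries mesh when the annular case runs both: removing a $b$-ring must not create a contractible $d$-ring, and vice versa.  This will be immediate because the replacement subdiagrams contain only $[a,c]$-cells or no cells at all, and free reduction only removes cells, so neither operation introduces any $b$- or $d$-cells---let alone closed corridors of them.
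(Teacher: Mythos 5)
Your argument is correct and is essentially the ring-excision surgery that the paper delegates to \cite{BRS}: excise the disk bounded by a contractible $b$- or $d$-ring and refill it over the presentation of the subgroup in which the ring's outer boundary word lives ($\langle a,c,d\mid[a,c]\rangle$, resp.\ the free group $\langle b,s\rangle$), with your count $\mu$ of $b$- and $d$-cells serving as the terminating measure. The only real difference is cosmetic but pleasant: you read off the corridor-side subgroups $\langle a,c,d\rangle$ and $\langle b,s\rangle$ directly from the relators rather than invoking the full HNN base groups, which makes the refilling step clean since the replacement diagrams contribute no $b$- or $d$-cells at all.
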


\begin{proof}
Given that $G_4$ is, by Lemma~\ref{structureG4}, an HNN-extension with stable letter $b$ and an HNN-extension with stable letter $d$, this claim then follows results in \cite{BRS} explaining how contractible $b$- or $d$-rings can be eliminated.
\end{proof}

\begin{lemma} \label{area detail in E}
If $u=1$ in  $E   =   \langle a, c,d,s \mid  [a,c],  s^a s^{-2}, s^d   s^{-2} \rangle$, then  $$\Area_E(u) \leq (|u|_c + |u|_s) 2^{( |u|_a + |u|_d )}.$$  
\end{lemma}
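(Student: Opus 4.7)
\emph{Plan.} The plan is to induct on $N := |u|_a + |u|_d$, exploiting the iterated HNN structure of $E$: it is the HNN extension of $E' := \langle a, c, s \mid [a,c], s^a s^{-2}\rangle$ with stable letter $d$ (associated subgroups $\langle s\rangle$ and $\langle s^2\rangle$), while $E'$ is itself the HNN extension of the rank-two free group $\langle c, s\rangle$ with stable letter $a$ (associated subgroups $\langle c,s\rangle$ and $\langle c,s^2\rangle$). For the base case $N = 0$, the word $u$ lies in $\langle c, s\rangle$; this subgroup is free of rank two in $E$, since it embeds in the HNN-base $F = \Z * \BS(1,2)$ with $c$ and $s$ in distinct free-product factors, so by the free-product normal form theorem it is isomorphic to a rank-two free group. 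Hence $u = 1$ in $E$ forces $u$ to freely reduce to the empty word, giving $\Area_E(u) = 0 \leq |u|_c + |u|_s$.

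\emph{Inductive step.} I would organize the argument as a two-tier induction. The inner tier is a sublemma asserting $\Area_{E'}(u) \leq (|u|_c + |u|_s)\cdot 2^{|u|_a}$ whenever $u \in \{a,c,s\}^*$ satisfies $u = 1$ in $E'$. This is proved by induction on $|u|_a$: apply Britton's lemma for $E'$ to find an $a$-pinch $a^\epsilon v a^{-\epsilon}$ with $v \in \{c,s\}^*$ representing an element of the corresponding associated subgroup, collapse it to $\phi^\epsilon(v)$ (where $\phi$ fixes $c$ and sends $s$ to $s^2$) via the rewrites $[a,c]$ and $as = s^2 a$ (or their inverses for $\epsilon = -1$) at cost $\leq |v|_c + |v|_s$, and apply the inductive hypothesis to the new word $u'$ with $|u'|_a = |u|_a - 2$, $|u'|_c = |u|_c$, and $|u'|_s \leq 2|u|_s$. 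The outer tier is the lemma itself, proved by induction on $|u|_d$: when $|u|_d = 0$ the sublemma suffices because $u \in E' \leq E$; when $|u|_d \geq 2$, Britton's lemma for $E$ over $E'$ yields a $d$-pinch $d^\epsilon v d^{-\epsilon}$ with $v \in \{a,c,s\}^*$ and $\bar v = s^k$ in $E'$. I would rewrite $v$ to $s^k$ in $E'$ at cost controlled by the sublemma applied to $vs^{-k}$, then apply $s^ds^{-2}$ a further $|k|$ times to collapse the pinch to $s^{2^\epsilon k}$, and finally invoke the outer inductive hypothesis on the resulting word.

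\emph{Main obstacle.} The principal difficulty is the quantitative bookkeeping: the reduction of a $d$-pinch introduces an $s$-power that can be exponentially larger than the original content of $v$, because of the exponential distortion of $\langle s\rangle$ in $\BS(1,2) \leq E'$, which gives $|k| \leq |v|_s \cdot 2^{|v|_a/2}$; the combined cost of rewriting, collapse, and the inductive bound on the resulting $u'$ must all fit within $(|u|_c + |u|_s)\cdot 2^{|u|_a + |u|_d}$. The inner tier reduces to the elementary inequality $(|u|_c + |u|_s) + (|u|_c + 2|u|_s)\cdot 2^{|u|_a - 2} \leq (|u|_c + |u|_s) \cdot 2^{|u|_a}$ for $|u|_a \geq 2$, which follows from $3|u|_c + 2|u|_s \geq |u|_c + |u|_s$. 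The outer tier's analogue is more delicate, absorbing the extra factor $2^{|v|_a/2}$ coming from distortion into the new exponential weight $2^{|u|_d}$, and choosing the pinch judiciously (so that the $a$- and $d$-content of $v$ is balanced against the corresponding counts in $u$) is the step requiring the most care.
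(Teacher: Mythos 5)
Your plan uses a two-tier iterated HNN decomposition ($E$ over $E' = \langle a,c,s \mid [a,c],\, s^as^{-2}\rangle$ over $F(c,s)$), whereas the paper views $E$ as a \emph{multiple} HNN extension of $E_0 = F(c,s)$ with \emph{both} $a$ and $d$ as stable letters simultaneously. This is more than a cosmetic difference; it is exactly what keeps the paper's bookkeeping trivial and makes yours fail. Your inner tier is fine. But in your outer tier, Britton's lemma for $E$ over $E'$ returns a $d$-pinch $d^{\epsilon} v d^{-\epsilon}$ whose middle word $v$ may still contain $a$-letters, and this can be unavoidable: in $u = d\,a^m s a^{-m}\,d^{-1}\, s^{-2^{m+1}}$ the unique $d$-pinch has content $a^m s a^{-m}$. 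Then $\bar v = s^k$ with $|k|$ as large as $|v|_s\, 2^{|v|_a/2}$, and the single term $|k|\cdot 2^{|v|_a}$ from applying the sublemma to $vs^{-k}$ is of order $|v|_s\, 2^{3|v|_a/2}$, which already overshoots the budget $(|u|_c+|u|_s)\,2^{|u|_a + |u|_d}$ once $|v|_a$ is close to $|u|_a$ and $|u|_d$ is small. Your proposed remedy of absorbing the extra factor $2^{|v|_a/2}$ into the weight $2^{|u|_d}$ cannot work, since removing one $d$-pinch only gains the constant factor $2^2$ there while the loss grows without bound in $|v|_a$; and ``choosing the pinch judiciously'' is unavailable when there is only one. (The hidden counterweight is that the rest of $u$ must itself represent $s^{\mp 2^{\epsilon} k}$ and so inflate $|u|_s$ or $|u|_a + |u|_d$, but this constraint does not enter your accounting, and importing it would substantially complicate the induction.)

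The paper sidesteps all of this. Treating $E$ as a multiple HNN extension over $E_0$, Britton's lemma yields a pinch $e^{\pm 1} y e^{\mp 1}$ with $e \in \{a,d\}$ and $y$ a word on $c,s$ \emph{only}, lying in the relevant associated subgroup of $E_0$. For $e=a$ the collapse is your inner-tier step; for $e=d$, the free reduction of $y$ is literally $s^k$ with $|k|\le |y|_s$ (no exponential factor), and collapsing to $s^{2^{\epsilon}k}$ costs at most $|k|$ applications of $s^d s^{-2}$. Either way the cost is at most $|y| \le |u|_c + |u|_s$, the $c,s$-count at most doubles, and the stable-letter count drops by two, so a geometric sum gives the claimed bound. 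If you want to retain the iterated picture, the fix is to first clear the $a$-pinches \emph{inside} $v$ (which exist by Britton's lemma for $E'$ over $F(c,s)$, since $\bar v \in \langle s \rangle \le F(c,s)$) before resolving the enclosing $d$-pinch --- but at that point you have reconstructed the multiple-HNN argument.
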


\begin{proof}

Let $E_0=\langle c,s\mid\rangle$,   let $\varphi_a$ be the monomorphism $E_0\hookrightarrow E_0$ given by $c\mapsto c, s\mapsto s^2$, and let $\varphi_d$ be the monomorphism $\langle s\rangle_{E_0}$ given by $s\mapsto s^2$. Then $E$ is the multiple HNN-extension of $E_0$ along $\varphi_a$ and $\varphi_d$ with stable letters $a$ and $d$, respectively. Our claim now follows from Britton's lemma, observing that $\varphi_a$ and $\varphi_d$ increase the length of a word by at most a factor of 2, that, for $e\in \{a,d\}$, $x\in E_0$, a subword of $u$ of the form $e^{\pm1}\varphi_e^{\mp1}(x)e^{\pm1}$ requires $\abs{\varphi_e^{\mp1}(x)}$ relations to transform into the word $x$, and that this transformation strictly decreases the number of $a$- or $d$-letters in $u$.
\end{proof}

\begin{lemma}\label{area detail A} If $u=1$ in $A$, then $\Area_A(u)\leq \lambda|u|^3$ for some constant $\lambda>0$.
\end{lemma}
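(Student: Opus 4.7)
My plan is to leverage the amalgamated free product decomposition $A = H *_{\langle b\rangle}\langle b,d\rangle$ from Lemma~\ref{structureG4}---with $H=\mathcal{H}_3(\mathbb{Z})$ having cubic Dehn function, $\langle b,d\rangle\cong\mathbb{Z}^2$ having quadratic Dehn function, and $\langle b\rangle$ undistorted in both factors via the obvious retractions---to construct a van Kampen diagram of cubic area for any trivial word $u\in A$ of length $n$.

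First I would take a reduced van Kampen diagram $\Delta$ for $u$ and, by the argument of Lemma~\ref{no b-rings} applied to $A$ viewed as an HNN-extension of $H$ with stable letter $d$, assume $\Delta$ has no $d$-rings. Because $[b,d]$ is the only defining relation involving $d$, each $d$-edge lies in a $[b,d]$-cell, and these assemble into $d$-corridors with $b^{\pm L_i}$ sides. Every such corridor runs between two $d$-edges on $\partial\Delta$ (so there are at most $n/2$ of them), and its $b^{L_i}$-side equals a subword of $u$ in $H$; since $\langle b\rangle$ is undistorted in $H$, $L_i\leq n$, so $\sum_i L_i\leq n^2/2$ and the total corridor area is at most $n^2/2$. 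Cutting $\Delta$ along these corridors yields Heisenberg subdiagrams whose boundary words on $\{a,b,c\}^{\pm 1}$ alternate between arcs of $\partial\Delta$ (total length $\leq n$) and $b^{\pm L_i}$-seams (total length $\leq n^2$).

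To bound the Heisenberg subdiagrams' total area by $O(n^3)$, I would shuffle each $b^{L_i}$-seam leftward past the adjacent $\partial\Delta$-arcs using $[a,b]c^{-1}$ and $[b,c]$, at cost $L_i\cdot(\text{arc length passed})$ relations per shuffle; this consolidates each subdiagram boundary into a Heisenberg word of length $O(n)$ whose non-$b$ content can then be filled by the standard cubic Heisenberg bound. The shuffling cost across all subdiagrams aggregates to $\leq n\cdot\sum_i L_i\leq n^3$, and the residual cubic Heisenberg fillings also sum to $O(n^3)$, giving $\Area_A(u)\leq \lambda n^3$ as required.

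The main obstacle is the careful bookkeeping in the shuffling phase---tracking how the $c^{\pm 1}$-letters generated by each shuffle interact with later shuffles and get absorbed into the final Heisenberg filling without quartic blow-up. A cleaner alternative is to invoke the general theorem that $\Area_{G_1*_CG_2}(n)\preceq\max(\Area_{G_1}(n),\Area_{G_2}(n),n^2)$ for amalgamated free products over undistorted finitely generated subgroups $C$, which applied to $A$ immediately yields $\Area_A(n)\preceq\max(n^3,n^2,n^2)=n^3$.
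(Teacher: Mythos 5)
Your ``cleaner alternative'' is, in essence, the paper's actual proof: the paper disposes of this lemma in one line by citing a general combination result (Lemma~III.$\Gamma$.6.20 of Bridson--Haefliger) together with the cubic Dehn function of $\mathcal{H}_3(\ZZ)$ and the observation that $A$ is the \emph{trivial} HNN-extension of $\mathcal{H}_3(\ZZ)$ with stable letter $d$ along the undistorted subgroup $\langle b\rangle$ (your decomposition $A\cong \mathcal{H}_3(\ZZ)*_{\langle b\rangle}\ZZ^2$ is the same structure in different clothing). Be careful, though, with the form in which you quote the combination theorem: $\Area_{G_1*_CG_2}(n)\preceq\max\{\Area_{G_1}(n),\Area_{G_2}(n),n^2\}$ for an \emph{arbitrary} undistorted finitely generated $C$ is stronger than what the routine pinching argument delivers --- that argument performs up to $n$ pinches and so generically loses a factor of $n$, which here would only give $n^4$. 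To get the clean bound you should invoke the precise statement that applies, leaning on the extra structure present here: the extension is trivial and $\langle b\rangle$ is cyclic and a retract of both factors (undistortedness of $\langle b\rangle$ in $A$, which you use twice, follows from the retraction of $A$ onto $\langle b\rangle$ killing $a$, $c$, $d$, and is worth recording).

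Your primary, hands-on argument has a genuine gap exactly where you flag one, and it is not mere bookkeeping. After cutting along the $d$-corridors, a single Heisenberg subdiagram can have boundary word $\zeta_0 b^{\pm L_1}\zeta_1\cdots b^{\pm L_k}\zeta_k$ with $\sum_j|\zeta_j|\leq n$ but $\sum_i L_i$ as large as $n^2$. Shuffling the seams past the arcs costs at most $n\sum_iL_i\leq n^3$ applications of $[a,b]c^{-1}$, as you say, but it also \emph{creates} up to $n^3$ letters $c^{\pm1}$ scattered through a word whose length is already of order $n^2$; gathering and cancelling these with the commutation relations costs, naively, far more than $n^3$, since a block $c^{\pm L_i}$ may have to travel past $\Theta(n^2)$ letters. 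So the assertion that ``the residual cubic Heisenberg fillings also sum to $O(n^3)$'' is precisely the content of the lemma and is not established by the sketch. To close this route you would need either to organize the shuffles so that the $c$'s produced cancel locally, or to fill each subdiagram by a bespoke argument exploiting that the seams are powers of the single generator $b$ with total exponent $O(n)$; as written, the estimate does not close, and the citation route is the efficient fix.
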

\begin{proof}
    This follows immediately from \cite[Lemma III.$\Gamma$.6.20]{BrH}, the Dehn function for $\mathcal{H}_{3}(\Z)$, and the fact that $A$ is a trivial HNN-extension of $\mathcal{H}_{3}(\Z)$ with stable letter $d$ along the undistorted subgroup $\langle b\rangle_{A}$.
\end{proof}

\begin{prop} \label{Dehn exp upper bound G4}
$\Area_{G_4}(n) \preceq 2^n$
\end{prop}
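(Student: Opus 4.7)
The plan is to exploit the HNN-extension structure $G_4 = \mathrm{HNN}(E, b, \phi)$ from Lemma~\ref{structureG4}, where $\phi$ is the automorphism of $H := \langle a,c,d\rangle \leq E$ given by $\phi(a) = c^{-1}a$, $\phi(c) = c$, $\phi(d) = d$. Given a word $w$ of length $n$ representing the identity in $G_4$, Lemma~\ref{no b-rings} supplies a reduced van~Kampen diagram $\Delta$ for $w$ with no $b$-rings, so all $b$-corridors of $\Delta$ are arches $C_1, \ldots, C_m$ with $m \leq n/2$; they partition $\Delta$ into regions $R_0, \ldots, R_m$, each a van~Kampen subdiagram over $E$. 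The total area then splits as $\sum_i |C_i| + \sum_j \Area(R_j)$, and the plan is to \emph{modify} $\Delta$ into a new diagram $\Delta^{\star}$ with the same boundary $w$ but whose corridor sides are short words in $H$, then apply Lemma~\ref{area detail in E} to each refilled region.

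For each arch $C_i$, the subdiagram of $\Delta$ on one side of $C_i$ witnesses that the element $u_i \in H$ represented by that side is conjugate to a portion $Q_i$ of $\partial\Delta$ by $b^{\pm 1}$ in $G_4$, so $|u_i|_{G_4} \leq |Q_i| + 2 \leq n+2$. Lemma~\ref{length detail} then produces a word $\sigma_i$ on $\{a,c,d\}$ representing $u_i$ with $|\sigma_i|_a + |\sigma_i|_d \leq n+2$ and $|\sigma_i|_c \leq 3(n+2)^2$. We build $\Delta^{\star}$ by replacing each $C_i$ with the $b$-arch $C_i'$ of $|\sigma_i|$ cells constructed letter-by-letter from $\sigma_i$ using the HNN-relators $[a,b]c^{-1}$, $[b,c]$, and $[b,d]$; its two sides read $\sigma_i$ and $\phi^{\pm 1}(\sigma_i)$. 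Each region $R_j^{\star}$ is refilled by a minimal $E$-diagram for its new boundary word, which is trivial in $E$ because each new side represents the same element of $H \leq E$ as the original $U_i$ or $V_i$.

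The total corridor length $\sum_i |\sigma_i| = O(n^3)$ is negligible next to the exponential we are targeting. For the region areas, orient the planar dual tree of the arch structure with a chosen root region $R_0$: for each non-root region $R$ with parent arch $C_p$ and children arches $C_1^{*}, \ldots, C_{\mu}^{*}$, the planar arrangement yields $|R \cap \partial\Delta| + \sum_k |Q_{C_k^{*}}| = |Q_{C_p}|$ (with $|Q_{C_p}|$ replaced by $n$ for the root). Using $|\sigma_i|_{a+d} \leq |Q_i| + 2$ and $|\phi^{\pm 1}(\sigma_i)|_{a+d} = |\sigma_i|_{a+d}$, a telescoping calculation yields $|\partial R_j^{\star}|_{a+d} \leq O(n)$ uniformly in $j$, while $|\partial R_j^{\star}|_{c+s} \leq O(n^2)$. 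Lemma~\ref{area detail in E} then bounds each region's area by $O(n^2) \cdot 2^{O(n)} = 2^{O(n)}$; summing over the at most $n$ regions gives $\Area(\Delta^{\star}) \leq 2^{O(n)}$, yielding the desired $\Area_{G_4}(n) \preceq 2^n$.

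The main obstacle is establishing the uniform bound $|\partial R_j^{\star}|_{a+d} \leq O(n)$, which has to hold even when a single region is adjacent to as many as $m = O(n)$ arches (the star case). This is what lets Lemma~\ref{length detail}'s \emph{linear} bound on $a$-and-$d$ letter counts yield an exponential area estimate rather than the double-exponential $2^{n^2}$ one would naively fear; the telescoping identity for the subtended boundary portions $Q_i$ handles both the star and the chain extremes in one stroke. A secondary bookkeeping point is verifying that the refilling is well-defined, which reduces to the embedding $H \hookrightarrow E$ that is implicit in Lemma~\ref{structureG4}.
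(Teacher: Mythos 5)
Your proof is correct, and while it rests on the same three ingredients as the paper's argument---Lemma~\ref{no b-rings} (all $b$-corridors are arches), Lemma~\ref{length detail} (linear $a,d$-content and quadratic $c$-content for representatives in $\langle a,c,d\rangle$), and Lemma~\ref{area detail in E} (the exponent in the $E$-area bound is the $a,d$-count)---the execution is genuinely different. The paper never touches the interior arch structure: it uses only the \emph{outermost} arches to single out a family of disjoint subwords of $w$ containing all the $b$-letters, replaces each such subword by its $\{a,c,d\}$-representative from Lemma~\ref{length detail}, pays $O(n^6)$ for these replacements via the cubic Dehn function of $A$ (Lemma~\ref{area detail A}), and then applies Lemma~\ref{area detail in E} \emph{once} to the resulting $b$-free word. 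You instead keep every arch, rebuild each as a thin corridor over $\sigma_i$, and refill each complementary region as an $E$-diagram, which obliges you to prove the per-region boundary bounds. Your telescoping identity does deliver these: strictly it should read $|R\cap\partial\Delta|+\sum_k|Q_{C_k^*}|+2\mu\le|Q_{C_p}|$ to account for the children's $b$-edges, but since $\mu\le n/2$ the $O(n)$ bound on $|\partial R_j^{\star}|_{a+d}$ (and the $O(n^2)$ bound on the $c,s$-count, using $\sum_k(|Q_{C_k^*}|+2)^2\le(\sum_k(|Q_{C_k^*}|+2))^2$) is unaffected. The well-definedness of the surgery also checks out: $\sigma_i$ equals the old corridor side not merely in $G_4$ but in $E$, because killing $s$ retracts $E$ onto $\langle a,c,d\mid[a,c]\rangle\cong\Z^2\ast\Z$, the same group as $\langle a,c,d\rangle_{G_4}$, and applying $\phi^{\pm1}$ transports the equality to the inner sides. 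What your route buys is the complete avoidance of Lemma~\ref{area detail A}; what the paper's route buys is that the only combinatorial input is the disjointness of the outermost subtended subwords, with no diagram surgery, no region-by-region accounting, and no telescoping.
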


\begin{proof}
Suppose $w$ is a word of length $n$ representing $1$ in $G_4$.  By Lemma~\ref{no b-rings}, $w$ admits a reduced van~Kampen diagram $\Delta$ in which there are no $b$-rings.

Consider a $b$-corridor $\beta$ in $\Delta$. If $w_1$ is the subword of $w$ along the portion of $\partial \Delta$ between the first and last $b$-edges of $\beta$ (inclusive), then $w_1$ represents an element of $\langle a,c,d\rangle_{G_4}$ and so, by Lemma~\ref{length detail}, there exists a word $w_2$ on $\{a,c,d\}$ that equals $w_1$ in $G_4$ and has   $|w_2|_a + |w_2|_d  \leq |w_1|$ and $|w_2|_c  \leq 3|w_1|^2$. So $|w_2| \leq 4|w_1|^2$.  Then, per Lemma~\ref{area detail A}, $$\Area(w_2 w_1^{-1})  \ \leq \ \lambda |w_1 w_2^{-1}|^3 \  \leq  \  \lambda(|w_1|+|w_2|)^3 \  \leq \ \lambda ( |w_1| + 4|w_1|^2)^3 \  \leq  \ 5^3 \lambda |w_1|^6.$$

There is a family  of disjoint subwords  in $w$ of the form of $w_1$ described above that together include all the $b$-letters in $w$. After replacing each one with its corresponding $w_2$, we get a word $w'$ on $a$, $c$, $d$, and $s$ such that $w = w'$ in $G_4$,  $|w'|_a + |w'|_d + |w'|_s  \leq n$,  $|w'|_c  \leq 3 n^2$, and $\Area_{G_4}(w'  w ^{-1}) \leq 5^3 \lambda n^6$.       

Because $w'$ contains no $b$-letters, it represents $1$ in  $E$ (by Lemma~\ref{structureG4}).  So  Lemma~\ref{area detail A} applies and tells us that  $\Area_E(w') \leq (3 n^2 + n) 2^n$. 
But  $\Area_{G_4}(w') \leq \Area_E(w')$ and, by Lemma~\ref{triangle ineq for A},  $\Area_{G_4}(w) \leq \Area_{G_4}(w'  w^{-1})  + \Area_{G_4}(w')$.  The result follows.  
\end{proof}

\begin{lemma} \label{Aquadratic}  $\CL_A(n) \preceq n^2$ for $A = \langle a,b,c,d \mid [a,b]c^{-1}, [a,c], [b,c], [b,d]  \rangle$.
\end{lemma}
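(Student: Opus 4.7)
The plan is to adapt the argument for $\CL_{\mathcal{H}_3(\ZZ)}(n) \preceq n^2$ from Section~\ref{heisensec} using the semidirect-product description of Lemma~\ref{structureG4}(1). Setting $K = \langle a,c,d \mid [a,c]\rangle \cong \ZZ^2 * \langle d\rangle$ and letting $\phi \in \Aut(K)$ be the automorphism fixing $c, d$ with $\phi(a) = c^{-1}a$, we have $A = K \rtimes_\phi \langle b\rangle$, and every element of $A$ is uniquely expressible as $kb^p$ with $k \in K$, $p \in \ZZ$.

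Given conjugate words $u, v \in A$ with $|u|+|v| \leq n$, the first step is to write $u = k_u b^p$ and $v = k_v b^p$ by pushing every $b$-letter to the right; the common value $p = \exp_b(u)$ is a conjugacy invariant, and each $b$-$a$ crossing introduces one $c^{\pm 1}$-letter, so $|p| \leq n$, $|k_u|_a + |k_u|_d \leq n$, and $|k_u|_c \leq n + n^2/4$ (analogously for $k_v$). A conjugator has the unique form $w = x b^m$ with $x \in K$ and $m \in \ZZ$, and the identity $wuw^{-1} = v$ is equivalent to
\[
x\,\phi^m(k_u)\,\phi^p(x^{-1}) \;=\; k_v \qquad\text{in } K.
\]
One then seeks solutions $x, m$ of this equation with $|x|_K + |m| \leq Cn^2$.

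The principal case, which mirrors the Heisenberg argument directly, is $k_u, k_v \in \langle a,c\rangle = \ZZ^2 \subset K$. Writing $k_u = a^\alpha c^{\gamma_u}$ and $k_v = a^\alpha c^{\gamma_v}$ (common $\alpha = \exp_a(u)$) and trying $x = a^X$, the equation collapses to the Bezout equation $m\alpha - pX = \gamma_u - \gamma_v$ in $\ZZ$. Since $|\alpha|, |p| \leq n$ and $|\gamma_u - \gamma_v| \leq Cn^2$, the quantitative Bezout estimate used in the Heisenberg proof yields integer solutions with $|m|, |X| \leq C'n^2$, so $|w|_A = O(n^2)$. The remaining case, in which the $K$-normal forms of $k_u, k_v$ contain $d^{\pm 1}$-syllables, is handled via the conjugacy theory of the free product $K = \ZZ^2 * \langle d\rangle$: because $\phi$ preserves this decomposition (acting only on the $\ZZ^2$-factor), the cyclic sequence of $d$-exponents is a $K$-conjugacy invariant, so aligning the $d$-sequences of $\phi^m(k_u)$ and $k_v$ forces $x$ to be a prefix of $k_u$ in $K$-normal form, of $K$-length at most $|k_u|_K = O(n^2)$. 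Matching the interleaved $\ZZ^2$-syllables under the $\phi^m$-twist then produces Bezout-type equations determining $m$ and the $\ZZ^2$-parts of $x$, with inputs of size $O(n^2)$ and hence solutions of size $O(n^2)$.

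The main obstacle will be the bookkeeping in the remaining case: ensuring that the Bezout conditions from matching the various $\ZZ^2$-syllables are simultaneously solvable by a single pair $(x, m)$ of $A$-length $O(n^2)$, and handling the boundary interaction produced by the $\phi^p$-twist on the outermost $\ZZ^2$-syllables of $x$. The principal case already attains the quadratic bound, matching the lower bound $n^2 \preceq \CL_A(n)$ implied by the $\mathcal{H}_3(\ZZ)$-retract in Corollary~\ref{cor_base}, so the remaining work only needs to confirm that the additional $d$-structure does not force a superquadratic conjugator.
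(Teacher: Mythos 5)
Your route is genuinely different from the paper's. You split $A$ as $K\rtimes_\phi\langle b\rangle$ with $K=\ZZ^2\ast\langle d\rangle$ and reduce everything to a single $\phi^p$-twisted conjugacy equation in the free product $K$, to be solved by syllable-matching plus quantitative Bezout. The paper instead views $A$ as an HNN-extension of $\mathcal{H}_3(\ZZ)$ with stable letter $d$, analyses radial $d$-corridors in an annular diagram (the conjugator pieces between consecutive radial $d$-corridors are forced to be powers of $b$), and extracts the same kind of equations $\gamma_i-\alpha_ik_i=\gamma_i'$; when some $\alpha_i\neq 0$ this pins down a conjugator of length $\leq n^2$, and when all $\alpha_i=0$ it switches to a \emph{second} splitting of $A$ (as an HNN-extension of the RAAG $M=\langle b,c,d\mid [b,c],[b,d]\rangle$ with stable letter $a$) and quotes Servatius' linear conjugator length for RAAGs. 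Your approach buys a uniform algebraic framework and avoids diagrams; the paper's buys a clean escape hatch for the degenerate case.

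Two points in your plan need repair before it is a proof. First, in the principal case you quote quantitative Bezout for $m\alpha-pX=\gamma_u-\gamma_v$, but small solutions exist only if \emph{some} solution exists, i.e.\ $\gcd(\alpha,p)\mid(\gamma_u-\gamma_v)$; this is not free, and must be derived from the existence of an arbitrary conjugator $w=xb^m$ by first running the free-product normal-form argument to show $x$ can be taken in the $\ZZ^2$-factor (or by retracting to $\mathcal{H}_3(\ZZ)$ along $d\mapsto 1$). Second, and more substantively, your claim that matching the $\ZZ^2$-syllables ``produces Bezout-type equations determining $m$'' fails exactly when every syllable of $k_u$ has zero $a$-exponent: then $\phi^m(k_u)=k_u$, the equations read $\gamma_i=\gamma_i'$ and constrain $m$ not at all, and the whole problem degenerates to a $\phi^p$-twisted conjugacy in $\langle c\rangle\ast\langle d\rangle$ together with a bound on the ``factor element'' $\sigma\in\ZZ^2$ permitted by the free-product conjugacy theorem (one must check $\sigma q\,\phi^p(\sigma)^{-1}=q'$ on a boundary syllable, splitting again on whether $p=0$). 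This is precisely the subcase the paper offloads to Servatius, and it is the part of your ``bookkeeping'' that is not routine; until it is written out, the lemma is not proved.
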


\begin{proof} 
Recall that we denote  the Heisenberg group $\langle a,b,c  \mid [a,b]c^{-1}, [a,c], [b,c]  \rangle$ by $G_1$.  The group $A$ is an HNN-extension of $G_1$ with stable letter $d$.  Let   $\psi:A \to G_1$ be  the surjection   killing $d$.   

Let $u$ and $v$ be words on $\set{a,b,c,d}$ which represent conjugate elements of $A$, and let $n=|u|+|v|$. Let $\Omega$ be a reduced annular diagram witnessing $u\sim v$ in $A$.  We have two cases.

Suppose first that there are no radial $d$-corridors in $\Omega$. 
Then $u = \psi(u)$ and $v = \psi(v)$ in $A$ and the result then  follows in this case from the fact that $\CL_{G_1}(n)\preceq n^2$ and the fact that $|\psi(u)|+|\psi(v)|\leq |u|+|v|$.

Suppose, on the other hand, that $\Omega$ has $m \geq 1$ radial $d$-corridors.
Then, after replacing $u$ and $v$ with cyclic conjugates,  we can express them as concatenations of subwords $$\begin{array}{rl} u  & \!\!\! = \   d^{\epsilon_1} u_1 \cdots d^{\epsilon_m} u_m \\ v &  \!\!\! = \    d^{\epsilon_1} v_1 \cdots d^{\epsilon_m} v_m \end{array}$$ where, for all $i$, $\epsilon_i = \pm 1$ and the edge labelled by the $d^{\epsilon_i}$ in  $u$ is joined by a radial $d$-corridor to the edge labelled by the $d^{\epsilon_i}$ in $v$.

Every element of $G_1$ can be expressed uniquely as  $a^{\alpha}b^{\beta}c^{\gamma}$ for some $\alpha, \beta, \gamma \in \Z$.  Let $\bar{u}_i = a^{\alpha_i}b^{\beta_i}c^{\gamma_i}$ and $\bar{v}_i = a^{\alpha'_i}b^{\beta'_i}c^{\gamma'_i}$ be the normal forms of $\psi(u_i)$ and $\psi(v_i)$.

For all $i$, any $d$-corridor in $\Omega$ emanating from a letter $d^{\pm 1}$ in $u_i$ (or $v_i$) must be a $d$-arch ending at some letter $d^{\mp 1}$ in $u_i$ (or $v_i$). So $\psi(u_i) = u_i$ and $\psi(v_i) = v_i$ in $A$, and therefore  
\begin{equation}
u_i  \ = \  \bar{u}_i \ \text{ and }  \ v_i  \ = \  \bar{v}_i  \ \text{ in } \  A \label{uuinA}
\end{equation}
for all $i$.  A word on $\set{a,b,c}$ can be converted to its $G_1$-normal form by shuffling letters,  with the proviso that interchanging an $a$ and $b$ introduces a $c^{\pm 1}$.   So, because   $\bar{u}_i$ and $\bar{v}_i$ can be obtained from $u_i$ and $v_i$ by deleting their $d$-letters and shuffling letters in this manner, 
\begin{equation}
\sum_{i=1}^m \left( |\alpha_i| + |\beta_i| + |\alpha_i'| + |\beta_i'| \right) \leq n  \  \text{ and } \   \sum_{i=1}^m \left( |\gamma|+|\gamma'| \right) \leq n^2. \label{sizes of exponents}
\end{equation}

Let $b^{k_i}$ be the word read along the sides of the $i$-th radial $d$-corridor in $\Omega$ so that 
\begin{equation} \label{bu=vb}
b^{k_i} u_i \ = \  v_i b^{k_{i+1}}
\end{equation}
in $A$ for all $i$ (subscripts modulo $m$). Now, \eqref{uuinA} and \eqref{bu=vb} imply that       
\begin{flalign} \label{bu=vb in H3}
b^{k_i} \ a^{\alpha_i}b^{\beta_i}c^{\gamma_i} \ = \  a^{\alpha'_i}b^{\beta'_i}c^{\gamma'_i} \ b^{k_{i+1}} 
\end{flalign}
in $A$, and therefore in $G_1$.  
The normal forms of left and right sides of \eqref{bu=vb in H3} are $a^{\alpha_i} b^{\beta_i+ k_i}c^{\gamma_i - \alpha_i k_i}$ and  $a^{\alpha'_i}b^{\beta'_i + k_{i+1}}c^{\gamma'_i}$, respectively.  So, for all $i$ (subscripts modulo $m$),
\begin{flalign}  
\alpha_i & = \alpha'_i,  \label{alphas}  \\ 
\beta_i+ k_i & = \beta'_i + k_{i+1}, \\ 
\gamma_i - \alpha_i k_i  & = \   \gamma'_i.   \label{gamma eqn}
\end{flalign}

Suppose $\alpha_i \neq 0$ for some $i$.  Then \eqref{gamma eqn} gives $k_i = (\gamma_i'-\gamma_i)/ \alpha_i$, whence $|k_i| \leq |\gamma_i'| + |\gamma_i| \leq n^2$, which proves the result in this case because $b^{k_i}$ conjugates a cyclic conjugate of $u$ to  a cyclic conjugate of $v$.    

Suppose, on the other hand, that $\alpha_i = 0$ for all $i$.   Then  $\alpha'_i=0$ for all $i$ by \eqref{alphas}. 
Define
 $$\begin{array}{rl} \bar{u}  & \!\!\! = \   d^{\epsilon_1} \bar{u}_1 \cdots d^{\epsilon_m} \bar{u}_m \\ \bar{v} &  \!\!\! = \    d^{\epsilon_1} \bar{v}_1 \cdots d^{\epsilon_m} \bar{v}_m. \end{array}$$ 
 Then  $u = \bar{u}$ and $v = \bar{v}$ in $A$ by \eqref{uuinA}.

Let $M = \langle b,c,d \mid  [b,c], [b,d]  \rangle$.  Mapping $b \mapsto cb$ and $c \mapsto c$ defines an automorphism of the subgroup $\Z^2 = \langle b, c \rangle$ of $M$. Thereby, $A$ is an HNN-extension of $M$ with stable letter $a$.  Because there are no $a$-letters in $\bar{u}$ and $\bar{v}$, they are conjugate in $M$.  The  conjugator length function of $M$ is linear per  Servatius' solution to the conjugacy problem for RAAGs in \cite{Servatius}.  The required bound then follows: for some constant $C >0$, $$\CL_A(u,v)  \ \leq  \ \CL_M(\bar{u},\bar{v}) \ \leq  \  C \left( |\bar{u}| + |\bar{v}| \right)  \ \leq  \  C (n+ n^2),$$ with the final inequality following from  \eqref{sizes of exponents}.        
\end{proof}

\begin{lemma} \label{Blinear}
$\CL_B(n) \preceq n$ for  $B = \langle a, d,s\mid  a s a^{-1} = s^{2}, d s d^{-1} =   s^{2} \rangle$.
\end{lemma}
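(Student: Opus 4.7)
The plan is to exploit the amalgamated free product structure $B = D_1 \ast_{\langle s\rangle} D_2$ established in Lemma~\ref{structureG4}\eqref{B subgroup property}, where $D_1=\langle a,s\mid s^as^{-2}\rangle$ and $D_2=\langle d,s\mid s^ds^{-2}\rangle$ are both isomorphic to $\BS(1,2)$. I will combine this with two known facts: $\CL_{\BS(1,2)}(n)\simeq n$, and that $\langle s\rangle$ is exponentially distorted in $\BS(1,2)$, so $|s^k|_B \simeq \log|k|$.

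Given $u,v\in B$ conjugate with $|u|+|v|\leq n$, first put both words into reduced amalgam normal form: each becomes an alternating product of syllables from $D_1\smallsetminus\langle s\rangle$ and $D_2\smallsetminus\langle s\rangle$ (with $\langle s\rangle$-factors absorbed). This normalization is achieved without increasing lengths beyond $O(n)$. By cyclically conjugating both words by short prefixes (of length $O(n)$), further assume $u$ and $v$ are cyclically reduced in the amalgam.

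The proof then splits into two cases based on the classical conjugacy theorem for amalgamated free products (Magnus--Karrass--Solitar). In the \emph{hyperbolic case} (cyclic syllable length $\geq 2$), the theorem says $u$ and $v$ must be related by a cyclic rotation of their syllables together with a conjugation by some $c\in\langle s\rangle$; the cyclic rotation is realized by conjugation by a prefix of $u$ of length $\leq n$, and I will argue $|c|_B=O(n)$ by comparing the first syllables of the rotated $u$ and of $v$ (elements of a single $D_i$) using $\CL_{D_i}\simeq n$. In the \emph{elliptic case} (cyclic syllable length $\leq 1$), a short conjugator pushes both $u,v$ into a single vertex group, say $D_1$. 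Either they are already $D_1$-conjugate, in which case $\CL_{D_1}$ directly supplies a linear conjugator, or the amalgam conjugacy theorem forces both to be $D_1$-conjugate to powers of $s$, reducing the problem to finding a short $B$-conjugator between $s^k$ and $s^m$; such powers are $B$-conjugate iff $m/k$ is a power of $2$, with conjugator $a^N$ of length $|N|$, and exponential distortion gives $|s^k|_B,|s^m|_B\leq n \Rightarrow |k|,|m|\leq 2^{O(n)} \Rightarrow |N|=O(n)$.

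The main obstacle is controlling the conjugator $c\in\langle s\rangle$ supplied by the amalgam conjugacy theorem in the hyperbolic case: although exponential distortion helps because $|s^k|_B$ is only $\log|k|$, we must argue that the required exponent is at most exponential in $n$, and this requires reducing to a conjugacy problem inside one $\BS(1,2)$ vertex group and invoking its linear conjugator length. A careful bookkeeping of how the $\langle s\rangle$-factors shuffle through the alternating syllables (using the Britton-style HNN relations $s^a=s^2$ and $s^d=s^2$) will be needed to justify this reduction.
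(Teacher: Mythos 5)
Your framework---the Magnus--Karrass--Solitar conjugacy criterion for the splitting $B=D_1\ast_{\langle s\rangle}D_2$---is a legitimate alternative to the paper's annular-diagram argument, and your elliptic case is essentially sound: reducing to conjugacy of powers of $s$ and using the exponential distortion of $\langle s\rangle$ to get $|k|,|m|\leq 2^{O(n)}$, hence a conjugator $a^N$ with $|N|=O(n)$, is exactly the computation the paper performs in its ``no radial $a$- or $d$-corridor'' case. The gap is in the hyperbolic case, which carries the whole weight of the lemma. The amalgam conjugacy theorem gives $c\,u'c^{-1}=v$ for some $c=s^k$ in the edge group, and syllable-by-syllable this unwinds into relations $c_{i-1}x_ic_i^{-1}=y_i$ for a chain $c_0=c, c_1,\ldots,c_m=c$ of edge-group elements, each equation containing \emph{two} unknowns. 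Comparing first syllables via $\CL_{D_i}(n)\simeq n$ therefore does not bound $k$: it bounds the length of \emph{some} conjugator of $x_1$ to $y_1$ inside $D_i$, not the specific edge-group element $c$ that must propagate consistently around the entire cycle of syllables. The ``careful bookkeeping'' you defer is precisely the proof: writing (up to rotation) $u=s^{\chi_1}e_1\cdots s^{\chi_m}e_m$ and $v=s^{\xi_1}e_1\cdots s^{\xi_m}e_m$ with $e_i\in\{a^{\pm1},d^{\pm1}\}$ and pushing $s^k$ through yields $k=2^{\mu}k+\sum_{i}2^{\mu_i}(\chi_i-\xi_i)$, where $\mu$ is the total $a,d$-exponent sum; only when $\mu\neq0$ can one solve for $k$ and conclude $|k|\preceq 2^{n}$, whence $|s^k|_B\preceq n$.

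When $\mu=0$ that equation is vacuous and $k$ is not determined (indeed $s$ then commutes with $u'$, so $c$ could a priori be anything); your sketch does not address this degenerate subcase at all. The paper handles it by choosing the cyclic rotation so that every suffix of $u'$ has nonnegative $a,d$-exponent sum, deducing $u's(u')^{-1}=s$ and hence $u'=v$, so the conjugator is just the rotating prefix. Until you supply both the $\mu\neq0$ computation and the $\mu=0$ argument, the hyperbolic case---and with it the lemma---is not proved. A smaller point: your claim that normalization into syllable form costs only $O(n)$ is true for lengths measured in $B$, but the $s$-exponents $\chi_i,\xi_i$ appearing in the syllables can reach $2^{O(n)}$; this is harmless yet must be tracked explicitly, since these exponents are exactly what enter the equation determining $k$.
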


          \begin{proof}
Suppose $g$ and $h$ are conjugate elements of $B$.  
  Let $u_0$ and $v_0$ be minimal length words  representing $g$ and $h$, respectively, let  $n := |u_0|+|v_0|$, and let $\Delta$ be an  annular van~Kampen diagram for the pair $u_0$ and $v_0$.        
  
\emph{Case: $\Delta$ has no radial $a$- or $d$-corridors.}  In this event, all the $a$- or $d$-corridors originating at a boundary component must be arches of $\Delta$. Indeed, we claim that there exist some cyclic conjugates $u_1$ and $v_1$ of $u_0$ and $v_0$ equal to $s^{k}$ and $s^{\ell}$, respectively,  for some integers $k$, $\ell$ such that $|k|+|\ell| \leq 2^n$. To see this, take $u_1$ and $v_1$ to be the cyclic conjugates of $u_0$ and $v_0$ such that there exists a simple path in the 1-skeleton of $\Delta$ from the initial vertex of $u_1$ to the initial vertex of $v_1$ which crosses no $a$- or $d$-arch. Then $u_1$ and $v_1$ can be converted to $s^{k}$ and $s^{\ell}$, respectively, by eliminating all \emph{pinches}: that is, if $u_1$ or $v_1$ contains a subword of the form $a s^{i} a^{-1}$, $d s^{i} d^{-1}$, $a^{-1} s^{2i} a$, or $d^{-1} s^{2i} d$, where $i \in \Z$, then replace it with $s^{2i}$, $s^{2i}$, $s^{i}$, or, $s^{i}$, respectively.  Arches in $\Delta$ correspond to pinches in cyclic conjugates of $u_0$ and $v_0$, and our choices of $u_1$ and $v_1$ guarantee that they contain all such pinches. Therefore, we may exhaustively freely reduce and make such substitutions until no such subwords remain. In particular, every $a$- or $d$-letter will be removed from $u_1$ and $v_1$, so the result will be powers $s^{k},s^{\ell}$ of $s$; respectively. Moreover, since at most $n$ pinches are present in $u_1$ and $v_1$, and every substitution at most doubles the total number of $s$-letters on both boundary components, so $|k|+|\ell|\leq 2^n$ as desired.

But then there exists a  word $w$ such that  $w s^{k} w^{-1} = s^{\ell}$ in $B$. So $w s^{k} w^{-1}$ can be converted to $s^{\ell}$ by successively eliminating pinches, and therefore    $s^{\ell} =  s^{2^r k} = a^{r} s^k a^{-r}$  for $r = \exp_a(w) + \exp_d(w)$.  So $\ell = 2^r k$ and $r \preceq n$, which implies $\CL(u, v) \preceq n$.

\emph{Case: there is a radial $a$- or $d$-corridor in $\Delta$.}   Take cyclic conjugates $u_1$ and $v_1$ of  $u_0$ and $v_0$, respectively,  beginning at the same radial $a$- or $d$-corridor $Q$.  Then $u_1 = \alpha_1 e_1 \cdots \alpha_m e_m$ and $v_1 = \beta_1 f_1 \cdots \beta_m f_m$ for some number $m \in \N$, some letters $e_1, \ldots, e_m, f_1, \ldots, f_m \in \set{a^{\pm 1},d^{\pm 1}}$, and some words $\alpha_1, \ldots, \alpha_m, \beta_1, \ldots, \beta_m$ on $a,d,s$.  The power of $s$ read along one side of $Q$ conjugates $u_1$ to $v_1$ in $B$.  Furthermore, for all $i$, a radial corridor connects the edge labelled by $e_i$ in $u_1$ to that labelled by $f_i$ in $v_1$, and each $\alpha_i$ and $\beta_i$ can be converted to a power of $s$ by eliminating pinches.  Accordingly, $u_1$ and $v_1$ are equal in $B$ to words $u = s^{\chi_1} e_1 \cdots s^{\chi_m} e_m$ and $v = s^{\xi_1} e_1 \cdots s^{\xi_m} e_m$, respectively, for some $\chi_1, \ldots, \chi_m, \xi_1, \ldots, \xi_m \in \Z$ such that $|u| + |v| = 2^n$. So $u_1 s^k v_1^{-1}  = u s^k v^{-1}  = s^k$ in $B$ for some $k \in \Z$.    Let $\mu = \exp_a(u) + \exp_d(u)$.  

\emph{Subcase: $\mu =0$.} By a propitious choice of which radial $a$- or $d$-corridor to read $u_1$ and $v_1$ from, we may assume that every suffix $\hat{u}$ of $u$ has the property that $\exp_a(\hat{u}) + \exp_d(\hat{u}) \geq 0$. This ensures that $u s u^{-1}$ can be converted to a power of $s$ by replacing the pinch between $e_m$ in $u$ and $e_m^{-1}$ in $u^{-1}$ by a power of $s$, then likewise replacing the pinch between $e_{m-1}$  and $e_{m-1}^{-1}$, and so on.  Indeed, we get $u s u^{-1} =s$.  And then, given that $u s^k v^{-1}  = s^k$, we deduce that $u=v$ in $B$. This implies $\CL(g,h) \preceq n$, as desired.

\emph{Subcase: $\mu  \neq 0$.}  Then no non-zero power of $s$ commutes with $u$ and so the $k \in \Z$ such that  $u s^k v^{-1}  = s^k$ is unique.  For $i=1, \ldots, m$ let $\mu_i$ be the exponent sum of $e_1 \cdots e_{i-1}$.  The existence of the diagram $\Delta$  implies that  $$u s^k v^{-1} \ = \ s^{\chi_1} e_1 \cdots s^{\chi_m} e_m  \ s^k \  e_m^{-1} s^{-\xi_m} \cdots e_1^{-1} s^{-\xi_1}$$ can be converted to $s^k$ by eliminating the pinch bookended by $e_m$ and $e_m^{-1}$, then eliminating that  bookended by $e_{m-1}$ and $e_{m-1}^{-1}$, and so on.  Doing so, we calculate that $$k \ = \  2^\mu k  + \sum_{i=1}^m 2^{\mu_i}( \chi_i - \xi_i).$$ 
Now $|\mu|, |\mu_i| \leq |n|$ for all $i$ and $\sum_{i=1}^m ( |\chi_i| + |\xi_i|) \leq 2^n$, and so $k = \frac{1}{1-2^{\mu}} \sum_{i=1}^m 2^{\mu_i}( \chi_i - \xi_i)  \preceq 2^n$.  Therefore $|s^k|_B \preceq n$, from which we deduce that $\CL(g,h)  \preceq n$.           
\end{proof}

\begin{lemma} \label{CL of E}
$\CL_E(n) \preceq n$ for
 $E   =   \langle a, c,d,s \mid  [a,c],  s^a s^{-2}, s^d   s^{-2} \rangle$.  
\end{lemma}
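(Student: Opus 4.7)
The plan is to mirror the proof of Lemma~\ref{Blinear}, using the fact that the only relator of $E$ involving $c$ is $[a,c]$, so every $c$-corridor in a reduced annular diagram has both sides labeled by the same power of $a$.  Given conjugate words $u,v\in E$ with $|u|+|v|\leq n$ and a reduced annular diagram $\Omega$ for the pair, I would split on whether $\Omega$ contains a radial $c$-corridor.

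If $\Omega$ has no radial $c$-corridor, every $c$-letter on $\partial\Omega$ belongs to a $c$-arch.  Collapsing these arches innermost-first---the subword $\omega$ bounded by an innermost arch has no further $c$-letters and equals the power of $a$ along the arch's side in $E$, so replacing $c^{\epsilon}\omega c^{-\epsilon}$ by that power of $a$ does not lengthen the word (since $\exp_a(\omega)$ equals the side-length)---produces words $u',v'\in B$ with $u=u'$ and $v=v'$ in $E$ and $|u'|+|v'|\leq n$.  Because $B$ is a retract of $E$ (kill $c$), $u'$ and $v'$ are conjugate in $B$, and Lemma~\ref{Blinear} supplies a conjugator in $B\subset E$ of length $\preceq n$.

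If $\Omega$ has radial $c$-corridors $Q_1,\ldots,Q_m$, I first collapse the $c$-arches inside the pieces between the $Q_i$ and then, after a cyclic shift, write $u=c^{\epsilon_1}u_1\cdots c^{\epsilon_m}u_m$ and $v=c^{\epsilon_1}v_1\cdots c^{\epsilon_m}v_m$ with $u_i,v_i\in B$, letting $a^{k_i}$ be the word along the sides of $Q_i$.  The subdiagrams between consecutive corridors give $a^{k_i}u_i=v_ia^{k_{i+1}}$ in $B$, and pulling the $a^{k_i}$'s past the $c^{\epsilon_i}$'s via $[a,c]=1$ collapses these equations to $a^{k_1}ua^{-k_1}=v$ in $E$.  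Applying the homomorphism $\exp_a:B\to\ZZ$ yields $k_{i+1}-k_i=\exp_a(u_i)-\exp_a(v_i)$, so $k_1,k_2,\ldots,k_m,k_1$ is a closed walk on $\ZZ$ with total variation at most $|u|_a+|v|_a\leq n$.  Choosing the starting corridor lets any $k_j$ play the role of $k_1$, and the total conjugator length is $|k_j|$ plus a cyclic shift of length $\leq n$.

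The main obstacle is to show that some choice of starting corridor gives $|k_j|\preceq n$.  If every $u_i\in\langle a\rangle$, then $u$ and $v$ collapse to elements of $\langle a,c\rangle\cong\ZZ^2$, and the equal $\exp_a$- and $\exp_c$-values of $u,v$ force $u=v$ in $E$, so $k_1=0$ works.  Otherwise, pick $i_0$ with $u_{i_0}\notin\langle a\rangle$ and start at $Q_{i_0}$; rewriting the equation there gives $\tilde v_{i_0}:=v_{i_0}a^{k_{i_0+1}-k_{i_0}}=a^{k_{i_0}}u_{i_0}a^{-k_{i_0}}$ in $B$ with $|\tilde v_{i_0}|_B\leq 2n$.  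The key structural claim, which I expect to be the hardest step, is that for $b\in B\setminus\langle a\rangle$ and $b'=a^rba^{-r}$, one has $|r|\preceq|b|_B+|b'|_B$.  I would prove it via the Bass-Serre tree of $B=D_1*_{\langle s\rangle}D_2$: when $b\in D_1=\BS(1,2)=\ZZ[1/2]\rtimes\ZZ$, conjugation by $a^r$ scales the $\ZZ[1/2]$-part by $2^r$, so the geodesic lengths of $b$ and $b'$ in $\BS(1,2)$ together dominate $|r|$; when the amalgamated normal form of $b$ contains a $D_2$-syllable, the reduced form of $a^rba^{-r}$ acquires $D_1$-syllables of length $\simeq|r|$ at both ends.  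Applying this with $b=u_{i_0}$ bounds $|k_{i_0}|\preceq n$, and combining with the cyclic shift gives a conjugator in $E$ of length $\preceq n$.
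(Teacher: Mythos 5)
Your first case (no radial $c$-corridors) is essentially the paper's, modulo the nice observation that collapsing $c$-arches innermost-first cannot increase length. Your second case takes a genuinely different route. The paper further splits on whether there is a radial $d$-corridor: if there is, a direct corridor-counting argument shows the power of $a$ read along the radial $c$-corridor is at most $|u|+|v|$; if there is none, the paper excises $d$-arches (incurring an exponential blow-up in $s$-letters but keeping the $a$-count linear), pushes the remaining $a$-letters to the ends, and reduces to an equation $a^{p'}s^{q_1}a^{-p'}=s^{q_2}$ in $B$, from which $|p'|\leq 2n$ by taking a logarithm. You instead work directly with the $c^{\epsilon_i}u_i$ decomposition, obtain the telescoping relations $a^{k_i}u_i=v_i a^{k_{i+1}}$ in $B$, and reduce everything to a single structural claim: for $b\in B\setminus\langle a\rangle$ and $b'=a^rba^{-r}$, one has $|r|\preceq |b|_B+|b'|_B$. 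That claim, if true, is an attractive clean statement and would give a more uniform argument.

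However, your proof of the key claim is genuinely incomplete, and this is the step everything hinges on. The $b\in D_1$ case is fine: the retraction $B\onto D_1$ (sending $d\mapsto a$) is injective on $D_1$, and in $\BS(1,2)=\ZZ[1/2]\rtimes\ZZ$ the geodesic length of $(x,m)$ grows linearly in the $2$-adic valuation and in $\log$ of the numerator of $x$, so the lengths of $(x,m)$ and $(2^r x,m)$ do together dominate $|r|$ when $x\neq 0$. But for $b\notin D_1$ your sketch says the reduced normal form of $a^rba^{-r}$ ``acquires $D_1$-syllables of length $\simeq|r|$ at both ends.'' This is not automatic: the outermost $D_1$-syllables $g_1,g_p$ of $b$ get replaced by $a^rg_1$ and $g_pa^{-r}$, and these could be short in $D_1$ if $g_1\approx a^{-r}$---your argument must then fall back on $b$ itself being long, which needs the separate (and nontrivial, since the edge group $\langle s\rangle$ is exponentially distorted in $D_1,D_2$) fact that word length in $B=D_1*_{\langle s\rangle}D_2$ is bounded below by the double-coset lengths of the normal-form syllables. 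You also need to handle $b$ whose images under the retractions $B\to D_1$, $B\to D_2$, and $B\to F_2=\langle a,d\rangle$ all land in $\langle a\rangle$ (such $b\notin\langle a\rangle$ exist, e.g.\ $b=sda^{-1}s^{-1}ad^{-1}$), so the quick ``apply a retraction'' argument does not cover all cases. In short, you have correctly identified the hard step and proposed a plausible route through it, but you have not closed it, whereas the paper's radial-$d$-corridor dichotomy sidesteps the claim entirely with more elementary means.
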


\begin{proof}
Suppose $u$ and $v$ are words representing conjugate elements of $E$ and that $\Delta$ is a reduced annular diagram for $u$ and $v$.  If there are no radial $c$-corridors in $\Delta$ then, perhaps after replacing $u$ and $v$ by cyclic conjugates, $u,v\in B$. Also, since the only relation involving $c$ is $[a,c]=1$, we see $|u|_B+|v|_B=|u|_E+|v|_E\leq n$. Thus $\CL(u,v)\leq \CL_B(n)$, which is linear by Lemma~\ref{Blinear}.  If there is at least one radial $c$-corridor, then the word $\tau$ along its sides is a reduced power of $a$, any canceling pair $a^{\pm1}a^{\mp1}$ would have to lie on the boundary of two cancelling cells. Up to cyclic conjugation, $u$ is therefore conjugated to $v$ by a power $\tau=a^p$ of $a$ for some $p\in \ZZ$.  If there is a radial $d$-corridor in $\Delta$, then every letter in $\tau$ is connected to a different  $a$-letter in the boundary words $u$ or $v$  by an $a$-corridor, since they cannot cross the $d$-corridor nor, because $\tau$ is reduced, return to cross the $c$-corridor. So $|\tau| \leq |u| + |v|$.  If there are no radial $d$-corridors in $\Delta$, then by excising the $d$-arches of $\Delta$ we may replace $u,v$ with $u',v'$ such that $u',v'$ contain no $d$-letters, $u'=u,v'=v$ in $E$, $|u'|+|v'|\leq 2^{n}$, and $|u'|_a+|v'|_a\leq n$. Moving each instance of $a$ in $u'$ and $v'$ to the right and each instance of $a^{-1}$, we have $u'=a^{-k_1}w_1(c,s)a^{\ell_1},v'=a^{-k_2}w_2(c,s)a^{\ell_2}$, for some $k_1,k_2,\ell_1,\ell_2\in\Naturals$ and words $w_1,w_2$ on $c$ and $s$ such that $|w_1|+|w_2|\leq 2^{2n}$. Conjugating $u'$ and $v'$ by $a^{\ell_1}$ and $a^{\ell_2}$ respectively, we obtain the words $u''=a^{-k_1+\ell_1}w_1,v''=a^{-k_2+\ell_2}w_2$, which are also conjugate by a power $a^{p-\ell_1+\ell_2}$ of $a$. Since $|-\ell_1+\ell_2|\leq |u'|_a+|v'|_a\leq n$ and $\CL(u,v)\leq p$, to show $\CL(u,v)\preceq n$ it suffices to find a linear bound for $p'=p-\ell_1+\ell_2$. Indeed, since $E$ is an HNN-extension with stable letter $a$ (see Lemma \ref{area detail in E}), we must have $-k_1+\ell_1=-k_2+\ell_2$, so $a^{p'}w_1a^{-p'}=w_2$. Applying to this equation the retraction $E\onto B$ given by killing $c$, we have $a^{p'}s^{q_1}a^{-p'}=s^{q_2}$ in $B$, where $|q_1|+|q_2|\leq |w_1|+|w_2|\leq 2^{2n}$. This gives $|p'|=\log_2\left(\abs{q_1-q_2}\right)\leq 2n$, and we are done.
\end{proof} 

Per Lemma~\ref{structureG4}\eqref{L property}, the subgroup $L=\langle b,c,s\rangle$ of $G_4$ is $\ZZ^2 \ast \ZZ =\langle b,c,s \mid [b,c] \rangle$.  The following lemma 
provides quantitative details of the normal form for elements $L$: 
\begin{equation} 
u  \ = \ b^{\beta_0} c^{\gamma_0} s^{\mu_1} b^{\beta_1} c^{\gamma_1} \cdots s^{\mu_{k}} b^{\beta_{k}} c^{\gamma_{k}}, \label{L normal form}
\end{equation}
where  $\mu_1, \ldots, \mu_{k}$ are non-zero and none of $b^{\beta_1} c^{\gamma_1}, \ldots, b^{\beta_{k -1}} c^{\gamma_{k -1}}$ are the identity in $\Z^2$. 

\begin{lemma} \label{length detail for L} If  $u$ of \eqref{L normal form} equals in $G_4$ a word  $v$ on $\set{a^{\pm 1}, b^{\pm 1}, c^{\pm 1}, d^{\pm 1}, s^{\pm 1}}$, then  $| \beta_0 | + \cdots + | \beta_k | \leq |v|$, $| \gamma_0| + \cdots + | \gamma_k |  \leq |v|^2$ and  $| \mu_1 | + \cdots + | \mu_k |  \leq 2^{|v|}$.  
\end{lemma}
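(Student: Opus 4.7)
The plan is to establish each of the three bounds separately, using different structural decompositions of $G_4$ recorded in Lemma~\ref{structureG4}.

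For $\sum|\beta_i|\le |v|$, I exploit the HNN-extension structure of $G_4$ over $E$ with stable letter $b$ and edge subgroup $\langle a,c,d\rangle\leq E$. The $L$-normal form of $u$ is automatically $b$-Britton-reduced: each intermediate syllable $c^{\gamma_i}s^{\mu_{i+1}}$ contains a nontrivial power of $s$, and $s\notin\langle a,c,d\rangle$ inside $E$ (visible from the structure of $E$ as a multiple HNN-extension of $\langle c,s\rangle$ with stable letters $a$ and $d$). Britton's Lemma then forces $|v|_b\ge\sum|\beta_i|$.

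For the other two bounds I use the amalgamated free product structure $G_4=A*_CB$. The $L$-normal form of $u$ is simultaneously its $A*_CB$-normal form, with $A$-syllables $x_i=b^{\beta_i}c^{\gamma_i}\in\langle b,c\rangle\setminus C$ and $B$-syllables $y_j=s^{\mu_j}\in\langle s\rangle\setminus C$. By uniqueness of amalgamated normal forms, reducing $v$ yields syllables $z_i\in A$ and $w_j\in B$ each related to the corresponding $x_i,y_j$ by left/right multiplication by edge-group elements $c_\ell\in C=\langle a,d\rangle$, with aggregate lengths $\sum|z_i|_A$, $\sum|w_j|_B$ and $\sum|c_\ell|$ each controlled linearly by $|v|$.

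For $\sum|\gamma_i|\lesssim|v|^2$: project $A\to H=\mathcal{H}_3(\ZZ)$ by killing $d$ (Lemma~\ref{structureG4}\eqref{Heisenberg retract}). Writing $\alpha_\ell$ for the $a$-exponent of the image of $c_\ell$ in $H$, the projection of $z_i$ has Heisenberg normal form $a^{\alpha_{i+1}-\alpha_i}b^{\beta_i}c^{\gamma_i-\alpha_{i+1}\beta_i}$. The quadratic distortion $|c^N|_H\asymp\sqrt{|N|}$ of the center, combined with $\sum|z_i|_A\lesssim|v|$, yields $\sum\sqrt{|\gamma_i-\alpha_{i+1}\beta_i|}\lesssim|v|$, hence $\sum|\gamma_i-\alpha_{i+1}\beta_i|\lesssim|v|^2$ by the superadditivity $(\sum\sqrt{y_i})^2\ge\sum y_i$. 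A triangle inequality together with $\sum|\alpha_\ell|\lesssim|v|$ and $\sum|\beta_i|\leq|v|$ then gives $\sum|\gamma_i|\lesssim|v|^2$.

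For $\sum|\mu_i|\lesssim 2^{|v|}$: project $B\to\BS(1,2)$ via $d\mapsto a$ (Lemma~\ref{structureG4}\eqref{BS retract}). The image of $w_j$ has the form $a^{p_j}s^{\mu_j}a^{q_j}$ with $p_j,q_j$ determined by the $a$- and $d$-exponents of the edge-group elements adjacent to $w_j$. Exponential distortion $|s^N|_{\BS(1,2)}\asymp\log|N|$ gives $\log|\mu_j|\lesssim|w_j|_B+|p_j|+|q_j|$ for each $j$. The superadditivity inequality $\sum 2^{a_j}\le 2^{\sum a_j}$ (valid when each $a_j\ge 1$), combined with $\sum(|w_j|_B+|p_j|+|q_j|)\lesssim|v|$, yields $\sum|\mu_j|\lesssim 2^{|v|}$. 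The main obstacle throughout is the careful bookkeeping of the edge-group corrections $c_\ell$ to ensure their aggregate length is linearly bounded by $|v|$, as required for both the $\gamma$- and $\mu$-bounds.
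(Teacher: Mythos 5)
You take a genuinely different route from the paper. The paper proves all three bounds in a single induction on $|v|$ using the Britton-reduced nesting of the $a^{\pm1},d^{\pm1}$-letters in $v$ (viewing $G_4$ as a multiple HNN extension of $L$ with stable letters $a$ and $d$). You instead handle the three bounds by three different decompositions: the $b$-HNN structure over $E$ for the $\beta$-bound, and the amalgam $A\ast_CB$ together with the retractions to $\mathcal{H}_3(\ZZ)$ and $\BS(1,2)$ for the $\gamma$- and $\mu$-bounds. Your $\beta$-argument is correct and clean: the $L$-normal form of $u$ is $b$-Britton-reduced since $c^{\gamma_i}s^{\mu_{i+1}}\notin\langle a,c,d\rangle$ when $\mu_{i+1}\neq 0$, so $\sum|\beta_i|$ is the invariant $b$-syllable count and is bounded by $|v|_b\le|v|$. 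The $\gamma$-argument is essentially right, except where you invoke $\sum_\ell|\alpha_\ell|\lesssim|v|$ you should use $\max_\ell|\alpha_\ell|\lesssim|v|$ instead---the latter does hold (each $c_\ell$ is the retraction to $C$ of a prefix of the normal form of $v$, so has $C$-length at most $\sum_i|z_i|_A+\sum_j|w_j|_B\le|v|$), and it suffices because $\sum_i|\alpha_{i+1}\beta_i|\le\bigl(\max_\ell|\alpha_\ell|\bigr)\sum_i|\beta_i|$.

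The $\mu$-bound, however, has a genuine gap. The step $\sum_j\bigl(|w_j|_B+|p_j|+|q_j|\bigr)\lesssim|v|$, and hence the asserted $\sum_\ell|c_\ell|\lesssim|v|$, is false. Take $v=a^N(sb)^ka^{-N}$ with $N\approx k$, so $|v|=2N+2k$ and $u=(s^{2^N}c^Nb)^k$. Reducing $v$ over $A\ast_CB$ gives syllables $(a^Ns),b,s,b,\ldots,s,(ba^{-N})$ with every interior edge-group correction equal to $c_\ell=a^N$. Hence $\sum_\ell|c_\ell|_C\approx 2kN$, of order $|v|^2$ rather than $|v|$, and your chain yields $2^{\sum_j a_j}\approx 2^{|v|^2}$, far above the required $2^{|v|}$ (while the conclusion does hold here, since $\sum_j|\mu_j|=k\cdot2^N\le 2^{2N+2k}$, so this is a gap, not a counterexample). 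The trouble is that a single large correction is charged once per syllable, and the number of syllables also grows linearly in $|v|$. You could repair the argument to get $\sum_j|\mu_j|\le 2^{K|v|}$ for some constant $K$, using $\max_j(|p_j|+|q_j|)\lesssim|v|$ together with $\sum_j2^{|w_j|_B}\le2^{\sum_j|w_j|_B}\le 2^{|v|}$; that would be qualitatively adequate for the paper's downstream uses, but it does not give the stated bound $2^{|v|}$. The paper's nested Britton induction avoids the issue entirely: each $a^{\pm1}$ or $d^{\pm1}$ in $v$ is charged exactly once for doubling the $s$-count, which is what produces the clean bounds $|v|$, $|v|^2$, $2^{|v|}$.
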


\begin{proof}
We proceed by induction on $|v|$. The base case of $|v|=1$ is trivial. For the inductive step, we have by Britton's Lemma that $$v=\alpha_1e_1\beta_1e_1^{-1}\alpha_2\cdots \alpha_me_m\beta_me_m^{-1}\alpha_{m+1},$$ where $\alpha_i$ is a word on $\{b,c,s\}$, $e_i\in \{a^{\pm1},d^{\pm1}\}$, and  $\beta_i$ is a word representing:\begin{itemize} \item  an element of $\langle b,c,s\rangle_{G_4}$ if $e_i=a$, \item an element of $\langle b,c,s^2\rangle_{G_4}$ if $e_i=a^{-1}$, \item an element of $\langle b,s\rangle_{G_4}$ if $e_i=d$, or\item an element of $\langle b,s^2\rangle_{G_4}$ if $e_i=d^{-1}$\end{itemize} for all $i$. In any of these cases, the inductive hypothesis gives that $\beta_i$ is equal to a word on $\{b,c,s\}$ with $|\beta_i|$ many $b$-letters, $|\beta_i|^2$ many $c$-letters, and $2^{|\beta_i|}$ many $s$-letters. Conjugating by $e_i$ adds at most $|\beta_i|$ many $c$-letters and $2^{|\beta_i|}$ many $s$-letters, so $e_1\beta_1e_1^{-1}$ can be written as a word on $\{b,c,s\}$ with $|\beta_i|$ many $b$-letters, $|\beta_i|^2+|\beta_i|\leq (|\beta_i|+1)^2\leq |e_1\beta_1e_1^{-1}|^2$ many $c$-letters, and $2^{|\beta_i|+1}\leq 2^{|e_1\beta_1e_1^{-1}|}$ many $s$-letters. The claim then follows by the total number of each letter and by the structure of $L$.  \end{proof}

Our next lemma  concerns manipulations of  words in the  normal form \eqref{L normal form}.

\begin{lemma} \label{block}
Suppose $z = f_m f_{m-1} \cdots f_1$ is a reduced word where  $f_1, \ldots, f_m \in \set{a^{\pm 1}, d^{\pm 1}}$.  Suppose that $u_0, \ldots, u_m$ are words  in the  normal form \eqref{L normal form} and that 
\begin{equation} \label{normal form of u_0}  
u_0  \ = \ b^{\beta_0} c^{\gamma_0} s^{\mu_1} b^{\beta_1} c^{\gamma_1} \cdots s^{\mu_{k}} b^{\beta_{k}} c^{\gamma_{k}}. 
\end{equation} 
Assume that  $u_i = f_i u_{i-1} f_i^{-1}$ in $G_4$ for $i=1, \ldots, m$ and that some (and so every) $u_i$ is not an element of $\langle s \rangle$.  
Then 
\begin{itemize}
\item either $z = a^{\lambda}$ (as words) for some $\lambda \in \Z$ such that $m = |\lambda|$ and   
$$\begin{array}{rl} 
u_m   & \!\! = \ b^{\beta_0} c^{\gamma_0 + \lambda \beta_0} s^{2^{\lambda} \mu_1} b^{\beta_1} c^{\gamma_1 + \lambda \beta_1} \cdots s^{2^{\lambda} \mu_{k}} b^{\beta_{k}} c^{\gamma_k + \lambda \beta_k},
\end{array}$$
\item or  $z= a^{\lambda_2}d^{\xi} a^{\lambda_1}$ (as words) for some $\lambda_1, \xi, \lambda_2 \in \Z$ such that $m = |\lambda_1|+ |\xi| + |\lambda_2|$ and
$$\begin{array}{rl} 
u_{|\lambda_1|}   & \!\! = \ b^{\beta_0}   s^{2^{\lambda_1} \mu_1} b^{\beta_1}   \cdots s^{2^{\lambda_1} \mu_{k}} b^{\beta_{k}} , \\ 
u_{|\lambda_1| + |\xi|}   & \!\! = \ b^{\beta_0}   s^{2^{\lambda_1 + \xi} \mu_1} b^{\beta_1}   \cdots s^{2^{\lambda_1 + \xi} \mu_{k}} b^{\beta_{k}}  , \\ 
u_m   & \!\! = \ b^{\beta_0} c^{\gamma'_0} s^{2^{\lambda_1 + \xi + \lambda_2} \mu_1} b^{\beta_1} c^{\gamma'_1} \cdots s^{2^{\lambda_1 + \xi + \lambda_2} \mu_{k}} b^{\beta_{k}} c^{\gamma'_{k}}m
\end{array}$$
and $\gamma_i= -\beta_i \lambda_1$ and $\gamma'_i= \beta_i \lambda_2$ for $i=0, \ldots, k$.  
\end{itemize}
\end{lemma}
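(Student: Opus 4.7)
The plan is to analyze the reduced word $z=f_m f_{m-1}\cdots f_1$ syllable by syllable, tracking how each $f_i$ acts on the $L$-normal form of $u_{i-1}$ and using the constraint $u_i\in L$ to restrict the shape of $z$.

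First I would compute the effect of a single conjugation. Using $aba^{-1}=bc$, $aca^{-1}=c$, $asa^{-1}=s^2$, $dbd^{-1}=b$, $dsd^{-1}=s^2$ (and noting that $c$ and $d$ do \emph{not} commute in $G_4$), block-by-block one finds
\begin{align*}
a\,u\,a^{-1}&\;=\;b^{\beta_0}c^{\gamma_0+\beta_0}s^{2\mu_1}\cdots s^{2\mu_k}b^{\beta_k}c^{\gamma_k+\beta_k},\\
d\,u\,d^{-1}&\;=\;b^{\beta_0}(dc^{\gamma_0}d^{-1})s^{2\mu_1}\cdots s^{2\mu_k}b^{\beta_k}(dc^{\gamma_k}d^{-1}).
\end{align*}
The key technical point is that $d\,u\,d^{-1}\in L$ forces every $\gamma_i=0$. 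I would prove this via the amalgamated free product decomposition $G_4=A*_C B$ with $C=\langle a,d\rangle$ free (Lemma~\ref{structureG4}): the $L$-normal form of $u$ is at the same time an $(A*_C B)$-normal form with $A$-syllables $b^{\beta_i}c^{\gamma_i}$ (which lie in $\langle b,c\rangle=L\cap A$, and outside $C$ for interior $i$) and $B$-syllables $s^{\mu_j}\in\langle s\rangle\setminus C$. Conjugation by $d\in C$ preserves this structure, replacing each $A$-syllable by $b^{\beta_i}(dcd^{-1})^{\gamma_i}$; since $dcd^{-1}$ is a non-trivial reduced word in $K=\langle a,c\rangle*\langle d\rangle\leq A$ not lying in $\langle b,c\rangle$, the new $A$-syllable is in $L\cap A$ if and only if $\gamma_i=0$. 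A symmetric argument gives the same conclusion for $d^{-1}ud\in L$, and Britton's Lemma applied to the HNN decompositions of $G_4$ with stable letters $a$ and $d$ (Lemma~\ref{structureG4}) shows that $a^{-1}ua\in L$ and $d^{-1}ud\in L$ additionally require every $\mu_j$ to be even.

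Next I would analyze the syllables of $z$. Reducedness partitions $z$ into maximal runs of a single letter from $\{a,a^{-1},d,d^{-1}\}$. If no $d$-syllable is present, then $z=a^{\lambda}$; iterating the $a$-action (or the $a^{-1}$-action, with the accompanying divisibility of each $\mu_j$ by $2^{|\lambda|}$) and inducting on $|\lambda|$ gives case~1. Otherwise let $a^{\lambda_1}$ (with $\lambda_1\in\Z$, possibly zero) be the initial $a$-syllable and $d^{\xi}$ the first $d$-syllable, $\xi\neq 0$. Iterating the $a$-action through $a^{\lambda_1}$ gives $u_{|\lambda_1|}=b^{\beta_0}c^{\gamma_0+\lambda_1\beta_0}s^{2^{\lambda_1}\mu_1}\cdots$, and applicability of the first $d^{\pm1}$ then forces every $c$-exponent to vanish, yielding $\gamma_i=-\lambda_1\beta_i$ and the stated form of $u_{|\lambda_1|}$. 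Iterating the $d$-action through $d^{\xi}$ scales every $\mu_j$ by $2^{\xi}$ and leaves the (now-trivial) $c$-exponents alone, producing the claimed $u_{|\lambda_1|+|\xi|}$. The next syllable, if any, must be an $a$-syllable $a^{\lambda_2}$, $\lambda_2\neq 0$, by reducedness; iterating the $a$-action restores the $c$-exponents to $\lambda_2\beta_i$, establishing the formula for $u_m$ with $\gamma'_i=\lambda_2\beta_i$ and $m=|\lambda_1|+|\xi|+|\lambda_2|$.

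Finally I would rule out a further syllable. Any subsequent $d$-syllable would once again demand every $c$-exponent of the current word to vanish, i.e.\ $\lambda_2\beta_i=0$ for all $i$; since $\lambda_2\neq 0$, this forces every $\beta_i=0$, and then $\gamma_i=-\lambda_1\beta_i=0$, so $u_0\in\langle s\rangle$, contradicting the hypothesis. Hence $z=a^{\lambda_2}d^{\xi}a^{\lambda_1}$, as asserted in case~2. The main obstacle throughout is the amalgamated-free-product bookkeeping showing $dcd^{-1}\notin\langle b,c\rangle$ in $A$; once that is settled, every remaining step is a direct computation with the $L$-normal form.
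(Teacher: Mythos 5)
Your high-level plan---compute the block-by-block effect of $a^{\pm1}$ and $d^{\pm1}$ on the $L$-normal form, observe that a $d^{\pm1}$-conjugation which lands back in $L$ forces every $c$-exponent to vanish, and then read off the constraints on the syllable decomposition of $z$---is sound, and the final syllable bookkeeping (including the step ruling out a fourth syllable) is correct. The problem is in the central claim that $dud^{-1}\in L$ forces every $\gamma_i=0$. You assert that ``the new $A$-syllable is in $L\cap A$ if and only if $\gamma_i=0$,'' but the normal form theorem for $A*_CB$ determines the syllables of a reduced form only up to $C$-shifts: if $g_1\cdots g_n=h_1\cdots h_n$ with both reduced then there exist $c_0=1,\,c_1,\ldots,c_{n-1},\,c_n=1\in C$ with $h_i=c_{i-1}g_ic_i^{-1}$. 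So from $dud^{-1}\in L$ one may conclude only that $c_{i-1}\,b^{\beta_i}(dc^{\gamma_i}d^{-1})\,c_i^{-1}\in\langle b,c\rangle$ for some $c_{i-1},c_i\in C$, which is a priori weaker than your assertion. The gap is fillable---from $c_{j-1}s^{2\mu_j}c_j^{-1}\in\langle s\rangle$ and the retraction $B\to F(a,d)$ killing $s$ one gets $c_{j-1}=c_j$ across every $B$-syllable, and this together with $c_0=1$ propagates along the word and, after a free-product syllable count in $K=\langle a,c\rangle*\langle d\rangle$, pins every $c_j$ to $1$ and forces $\gamma_i=0$---but as written your argument omits this entirely, and it is precisely the content of the claim.

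There is a shorter route, and it is the one Lemma~\ref{structureG4} is set up for. That lemma exhibits $G_4$ as an HNN-extension with stable letter $d$ of $H=\langle a,b,c,s\mid [a,b]c^{-1},[a,c],[b,c],s^as^{-2}\rangle$, with associated subgroups $\langle b,s\rangle$ and $\langle b,s^2\rangle$. Since $u$ and $dud^{-1}$ both lie in $L\leq H$, the word $d\,u\,d^{-1}$ represents an element of the base $H$, and Britton's Lemma gives $u\in\langle b,s\rangle$; symmetrically $d^{-1}ud\in L$ forces $u\in\langle b,s^2\rangle$. Since $\langle b,s\rangle\leq L=\langle b,c\rangle*\langle s\rangle$ is the free factor $\langle b\rangle*\langle s\rangle$ and $L$-normal forms are unique, every $\gamma_i=0$ immediately. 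A minor note along these lines: Lemma~\ref{structureG4} supplies HNN decompositions with stable letters $b$ and $d$, not $a$ and $d$ as you say, so the divisibility remark you credit to an ``$a$-HNN'' needs its own justification, although it is a side observation not used in your main line of argument.
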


\begin{proof}
Here are the key points.  When $f_i = d^{\pm 1}$, the relation $u_i = f_i u_{i-1} f_i^{-1}$ in $G_4$ necessitates that there be no $c$-letters in the normal form of $u_{i-1}$.  And when $f_i = a^{\pm 1}$, we can relate the $c$-letters in $u_{i-1}$ to those in $u_{i}$---for example, because $u_{i-1} , u_{i} \notin \langle s \rangle$, at least one of $u_{i-1}$ and $u_{i}$ contains $c$-letters. 
\end{proof}

We are now ready to bound the lengths of conjugators.   

\begin{lemma} \label{quadratic bound from stack}
Suppose $x$ and $y$ are words on $\set{a^{\pm 1}, b^{\pm 1}, c^{\pm 1}, d^{\pm 1}, s^{\pm 1}}$.  Let  $n = |x|+|y|$.  If there exists a word $w$ on $\set{a^{\pm 1},  d^{\pm 1}}$ such that $wx = y w$ in $G_4$, then there exists such a $w$ for which $|w| \leq 2n +  8 n^2$.
\end{lemma}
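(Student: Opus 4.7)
Plan: The strategy is to bound a minimum-length word $w \in \langle a, d\rangle$ satisfying $wxw^{-1} = y$ by tracking intermediate conjugates along $w$. Write $w = f_m\cdots f_1$ as a freely reduced word with $f_i\in\{a^{\pm 1}, d^{\pm 1}\}$, and define $x_j := (f_j\cdots f_1)\,x\,(f_1^{-1}\cdots f_j^{-1})\in G_4$ for $0\leq j\leq m$. Minimality forces the $x_j$ to be pairwise distinct elements of $G_4$: a coincidence $x_i = x_j$ with $i<j$ would make $f_m\cdots f_{j+1}\,f_i\cdots f_1$ into a valid conjugator of $x$ to $y$, of length at most $m-(j-i)<m$ after free reduction, contradicting minimality. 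Hence $m+1$ is bounded by the number of distinct conjugates of $x$ arising along this trajectory.

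I would next exploit the HNN decomposition of $G_4$ with stable letter $b$ over base $E=\langle a,c,d,s\rangle$ (Lemma~\ref{structureG4}), whose associated subgroup $\langle a,c,d\rangle$ is preserved by the automorphism $a\mapsto c^{-1}a$, $c\mapsto c$, $d\mapsto d$. Since $[b,d]=1$ and $aba^{-1}=cb$, a direct computation yields $wb^{\epsilon}w^{-1} = c^{\epsilon\,\text{exp}_a(w)}\,b^\epsilon$ for any $w\in\langle a,d\rangle$ and $\epsilon\in\{\pm 1\}$. Writing $x = e_0\,b^{\epsilon_1}\,e_1\cdots b^{\epsilon_k}\,e_k$ in Britton normal form with $e_i\in E$ and comparing $wxw^{-1}$ to the Britton normal form of $y$ both pins $|\text{exp}_a(w)|$ linearly in $n$ (contributing the $2n$ term) and reduces the problem to finding a $w\in\langle a,d\rangle$ that simultaneously $C$-conjugates each $e_i$ to the corresponding $E$-piece of $y$.

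The quadratic estimate then comes from applying Lemma~\ref{block} to the $L$-syllables arising inside the $e_i$'s, where $L=\langle b,c,s\rangle\cong\mathbb{Z}^2\ast\mathbb{Z}$ is the subgroup of Lemma~\ref{structureG4}\eqref{L property}. Putting an $L$-element into the normal form \eqref{L normal form} and restricting attention to segments of the trajectory that stay inside $L\setminus\langle s\rangle$, Lemma~\ref{block} forces the corresponding sub-word of $w$ into the restricted shape $a^\lambda$ or $a^{\lambda_2}d^\xi a^{\lambda_1}$. Combining the explicit exponent formulas of that lemma with Lemma~\ref{length detail for L}'s bounds $\sum|\beta_i|\leq n$, $\sum|\gamma_i|\leq n^2$, and $\sum|\mu_j|\leq 2^n$, one obtains $|\lambda|, |\lambda_1|, |\lambda_2|\leq 2n^2$, while the Baumslag--Solitar $s$-growth forces $|\xi|\leq 2\log_2 n$; summing yields the $8n^2$ term. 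The main obstacle will be the case where intermediate conjugates $x_j$ leave $L$ (so that Lemma~\ref{block} does not apply directly); my plan to resolve this is to partition $w$ at each excursion out of and back into $L$, bound each segment independently using the appropriate sublemma, and use the pairwise-distinctness of the $x_j$ to control the total number of segments.
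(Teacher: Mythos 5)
Your opening observation (minimality of $w$ forces the intermediate conjugates $x_j$ to be pairwise distinct) is correct but yields nothing quantitative: pairwise distinctness in an infinite group does not bound $m$. The quantitative content must come from the structural analysis, and there the proposal has two genuine gaps. First, the identity $wb^{\epsilon}w^{-1}=c^{\epsilon\exp_a(w)}b^{\epsilon}$ is false once $a$'s and $d$'s interleave, because $c$ and $d$ do not commute in $G_4$ (the subgroup $\langle a,c,d\rangle$ is $\ZZ^2\ast\ZZ$ by Lemma~\ref{structureG4}\eqref{A subgroup property}); for instance $(da)b(da)^{-1}=d(cb)d^{-1}=(dcd^{-1})b\neq cb$. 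Even a corrected statement that pins down $\exp_a(w)$ cannot contribute a length bound, since words such as $a^{N}da^{-N}$ have zero $a$-exponent and arbitrary length. Second, and more seriously, Lemma~\ref{block} governs conjugation of elements of $L=\langle b,c,s\rangle$ by $\set{a,d}$-words, but your trajectory consists of conjugates of the arbitrary word $x$, and your Britton reduction lands in pieces of $E=\langle a,c,d,s\rangle$, not of $L$ ($L$ contains the stable letter $b$ and meets $E$ only in $\langle c,s\rangle$). So there is no point of your argument at which Lemma~\ref{block} legitimately applies, and the fallback of ``partitioning at excursions out of $L$ and counting segments by distinctness'' is not an argument.

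The missing idea is diagrammatic: take a van~Kampen diagram for $wxw^{-1}y^{-1}$ and use its $a$- and $d$-corridors. These pair the letters of the two $w$-sides so that $w=w_0zw_1=\tilde w_0 z\tilde w_1$ as words, with $|w_0|+|w_1|+|\tilde w_0|+|\tilde w_1|\leq n$ (each outer letter starts a corridor ending on $x$ or $y$), and the conjugates of $x_1=w_1x\tilde w_1^{-1}$ by prefixes of $z$ are words read along corridor sides, hence automatically lie in $L$ --- this is what makes Lemma~\ref{block} applicable. Moreover, the double occurrence of $z$ inside $w$ forces the periodicity $z=\tau^{\ell}\tau_0$, which is what controls $|z|$ in the degenerate cases (e.g.\ $u_0\in\langle s\rangle$ or $u_0=b^{\beta_0}$) where the normal-form exponents give no information; your proposal has no substitute for this. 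Finally, the bound $|\xi|\leq 2\log_2 n$ is unjustified: the $s$-exponents can have size $2^{\Theta(n)}$ by Lemma~\ref{length detail for L}, so the Baumslag--Solitar growth only bounds $|\lambda_1+\xi+\lambda_2|$ linearly, and $|\xi|$ itself is bounded only quadratically, after $\lambda_1,\lambda_2$ are controlled via the $c$-exponents. This last point would not change the order of the final bound, but the first two gaps require the diagrammatic input to be filled.
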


\begin{proof}
Suppose $w$ is a word on $\set{a^{\pm 1},  d^{\pm 1}}$ such that $wx = y w$ in $G_4$. Further assume that $w$ is of minimal length among all such $w$ (so in particular is reduced). Let $\Delta$ be a van~Kampen diagram for $w x w^{-1} y^{-1}$.  

No two $a$- or $d$-corridors in $\Delta$ can cross. No pair of $a$-edges in one of the two portions of $\partial \Delta$ labelled by $w$ can be connected by an $a$-corridor, and ditto no pair of $d$-edges by a $d$-corridor.  So, per Figure~\ref{vK diagram with corridor block}, $w$ can be expressed in two ways as concatenations of subwords:  $$w \ = \  w_0 z w_1   \  = \  \tilde{w}_0  z \tilde{w}_1$$  where $m = |z|$  and the  $z = f_m f_{m-1} \cdots f_1$ subwords label two portions of $\partial \Delta$  so that for all $1 \leq i \leq m$,  the  letter $f_i$ in the $z$ in one copy of $w$ is joined by an $a$- or $d$-corridor to the $f_i$ in the other, and  all $a$- and $d$-corridors starting from $w_0$ or $\tilde{w}_0$ end in $y$, and those starting from $w_1$ or $\tilde{w}_1$ end in $x$.  The latter implies that for $x_1 := w_1 x \tilde{w}_1^{-1}$ and  $y_1 := w_0^{-1} y \tilde{w}_0$ (as words),
      \begin{align}
    |w_0| + |w_1| + |\tilde{w}_0| + |\tilde{w}_1|  &  \ \leq \ n, \text{ and}  \label{u1}  \\
    |x_1| + |y_1|  & \ \leq \ 2n.  \label{u2}
    \end{align}

For $i=0, 1, \ldots, m$, the word $$f_i  f_{i-1} \cdots f_1  \, x_1 \,  f_1^{-1}   \cdots  f^{-1}_{i-1}  f^{-1}_{i}$$ around part of  $\partial \Delta$ equals in $G_4$ a word along one side of  one such corridor, so represents an   element of $L$.  Let $u_i$ be its normal form per \eqref{L normal form}.  Then $x_1 = u_0$ and $y_1= u_m$ in $G_4$ and $z u_0 =u_m z$ in $L$.

\begin{figure}[ht]
\tikzset{every picture/.style={line width=0.75pt}} 

\begin{tikzpicture}[x=0.5pt,y=0.5pt,yscale=-1,xscale=1]

\draw   (71.45,49.45) -- (541.86,49.45) -- (541.86,431.27) -- (71.45,431.27) -- cycle ;
\draw    (297.25,49.45) -- (314.07,49.45) ;
\draw [shift={(316.07,49.45)}, rotate = 180] [color={rgb, 255:red, 0; green, 0; blue, 0 }  ][line width=0.75]    (10.93,-3.29) .. controls (6.95,-1.4) and (3.31,-0.3) .. (0,0) .. controls (3.31,0.3) and (6.95,1.4) .. (10.93,3.29)   ;
\draw    (297.25,431.27) -- (314.07,431.27) ;
\draw [shift={(316.07,431.27)}, rotate = 180] [color={rgb, 255:red, 0; green, 0; blue, 0 }  ][line width=0.75]    (10.93,-3.29) .. controls (6.95,-1.4) and (3.31,-0.3) .. (0,0) .. controls (3.31,0.3) and (6.95,1.4) .. (10.93,3.29)   ;
\draw [line width=1.5]    (71.45,326.27) -- (541.86,288.09) ;
\draw    (71.45,326.27) -- (304.66,307.34) ;
\draw [shift={(306.66,307.18)}, rotate = 175.36] [color={rgb, 255:red, 0; green, 0; blue, 0 }  ][line width=0.75]    (10.93,-3.29) .. controls (6.95,-1.4) and (3.31,-0.3) .. (0,0) .. controls (3.31,0.3) and (6.95,1.4) .. (10.93,3.29)   ;
\draw [line width=1.5]    (71.45,192.64) -- (541.86,154.45) ;
\draw    (71.45,192.64) -- (304.66,173.71) ;
\draw [shift={(306.66,173.55)}, rotate = 175.36] [color={rgb, 255:red, 0; green, 0; blue, 0 }  ][line width=0.75]    (10.93,-3.29) .. controls (6.95,-1.4) and (3.31,-0.3) .. (0,0) .. controls (3.31,0.3) and (6.95,1.4) .. (10.93,3.29)   ;
\draw [line width=1.5]    (71.45,154.45) -- (541.86,116.27) ;
\draw    (71.45,154.45) -- (304.66,135.53) ;
\draw [shift={(306.66,135.36)}, rotate = 175.36] [color={rgb, 255:red, 0; green, 0; blue, 0 }  ][line width=0.75]    (10.93,-3.29) .. controls (6.95,-1.4) and (3.31,-0.3) .. (0,0) .. controls (3.31,0.3) and (6.95,1.4) .. (10.93,3.29)   ;
\draw    (71.45,173.55) -- (541.86,135.36) ;
\draw [line width=0.75]    (71.45,173.55) -- (304.66,154.62) ;
\draw [shift={(306.66,154.45)}, rotate = 175.36] [color={rgb, 255:red, 0; green, 0; blue, 0 }  ][line width=0.75]    (10.93,-3.29) .. controls (6.95,-1.4) and (3.31,-0.3) .. (0,0) .. controls (3.31,0.3) and (6.95,1.4) .. (10.93,3.29)   ;
\draw [line width=1.5]    (71.45,383.55) -- (541.86,345.36) ;
\draw    (71.45,383.55) -- (304.66,364.62) ;
\draw [shift={(306.66,364.45)}, rotate = 175.36] [color={rgb, 255:red, 0; green, 0; blue, 0 }  ][line width=0.75]    (10.93,-3.29) .. controls (6.95,-1.4) and (3.31,-0.3) .. (0,0) .. controls (3.31,0.3) and (6.95,1.4) .. (10.93,3.29)   ;
\draw    (71.45,345.36) -- (541.86,307.18) ;
\draw [line width=0.75]    (71.45,345.36) -- (304.66,326.43) ;
\draw [shift={(306.66,326.27)}, rotate = 175.36] [color={rgb, 255:red, 0; green, 0; blue, 0 }  ][line width=0.75]    (10.93,-3.29) .. controls (6.95,-1.4) and (3.31,-0.3) .. (0,0) .. controls (3.31,0.3) and (6.95,1.4) .. (10.93,3.29)   ;
\draw [line width=1.5]    (71.45,249.91) -- (541.86,211.73) ;
\draw    (71.45,249.91) -- (304.66,230.98) ;
\draw [shift={(306.66,230.82)}, rotate = 175.36] [color={rgb, 255:red, 0; green, 0; blue, 0 }  ][line width=0.75]    (10.93,-3.29) .. controls (6.95,-1.4) and (3.31,-0.3) .. (0,0) .. controls (3.31,0.3) and (6.95,1.4) .. (10.93,3.29)   ;
\draw [line width=1.5]    (71.45,326.27) -- (541.86,288.09) ;
\draw    (71.45,326.27) -- (304.66,307.34) ;
\draw [shift={(306.66,307.18)}, rotate = 175.36] [color={rgb, 255:red, 0; green, 0; blue, 0 }  ][line width=0.75]    (10.93,-3.29) .. controls (6.95,-1.4) and (3.31,-0.3) .. (0,0) .. controls (3.31,0.3) and (6.95,1.4) .. (10.93,3.29)   ;
\draw    (71.45,230.82) -- (541.86,192.64) ;
\draw [line width=0.75]    (71.45,230.82) -- (81.89,229.97) -- (304.66,211.89) ;
\draw [shift={(306.66,211.73)}, rotate = 175.36] [color={rgb, 255:red, 0; green, 0; blue, 0 }  ][line width=0.75]    (10.93,-3.29) .. controls (6.95,-1.4) and (3.31,-0.3) .. (0,0) .. controls (3.31,0.3) and (6.95,1.4) .. (10.93,3.29)   ;
\draw    (71.45,211.73) -- (541.86,173.55) ;
\draw [line width=0.75]    (71.45,211.73) -- (304.66,192.8) ;
\draw [shift={(306.66,192.64)}, rotate = 175.36] [color={rgb, 255:red, 0; green, 0; blue, 0 }  ][line width=0.75]    (10.93,-3.29) .. controls (6.95,-1.4) and (3.31,-0.3) .. (0,0) .. controls (3.31,0.3) and (6.95,1.4) .. (10.93,3.29)   ;
\draw  [dash pattern={on 0.84pt off 2.51pt}]  (52.64,135.36) -- (52.64,364.45) ;
\draw  [dash pattern={on 0.84pt off 2.51pt}]  (52.64,249.91) -- (52.64,242.36) ;
\draw [shift={(52.64,240.36)}, rotate = 90] [color={rgb, 255:red, 0; green, 0; blue, 0 }  ][line width=0.75]    (10.93,-3.29) .. controls (6.95,-1.4) and (3.31,-0.3) .. (0,0) .. controls (3.31,0.3) and (6.95,1.4) .. (10.93,3.29)   ;
\draw    (41.18,155) -- (60,155) ;
\draw    (43.23,383.55) -- (62.05,383.55) ;
\draw  [dash pattern={on 0.84pt off 2.51pt}]  (598.31,97.18) -- (598.31,326.27) ;
\draw  [dash pattern={on 0.84pt off 2.51pt}]  (598.31,211.73) -- (598.31,204.18) ;
\draw [shift={(598.31,202.18)}, rotate = 90] [color={rgb, 255:red, 0; green, 0; blue, 0 }  ][line width=0.75]    (10.93,-3.29) .. controls (6.95,-1.4) and (3.31,-0.3) .. (0,0) .. controls (3.31,0.3) and (6.95,1.4) .. (10.93,3.29)   ;
\draw    (588.9,116.27) -- (607.72,116.27) ;
\draw    (588.9,345.36) -- (607.72,345.36) ;
\draw  [dash pattern={on 0.84pt off 2.51pt}]  (52.64,49.45) -- (52.64,135.36) ;
\draw  [dash pattern={on 0.84pt off 2.51pt}]  (52.64,97.18) -- (52.64,89.64) ;
\draw [shift={(52.64,87.64)}, rotate = 90] [color={rgb, 255:red, 0; green, 0; blue, 0 }  ][line width=0.75]    (10.93,-3.29) .. controls (6.95,-1.4) and (3.31,-0.3) .. (0,0) .. controls (3.31,0.3) and (6.95,1.4) .. (10.93,3.29)   ;
\draw    (43.23,49.45) -- (62.05,49.45) ;
\draw  [dash pattern={on 0.84pt off 2.51pt}]  (52.64,364.45) -- (52.64,402.64) ;
\draw  [dash pattern={on 0.84pt off 2.51pt}]  (52.64,431.27) -- (52.64,404.64) ;
\draw [shift={(52.64,402.64)}, rotate = 90] [color={rgb, 255:red, 0; green, 0; blue, 0 }  ][line width=0.75]    (10.93,-3.29) .. controls (6.95,-1.4) and (3.31,-0.3) .. (0,0) .. controls (3.31,0.3) and (6.95,1.4) .. (10.93,3.29)   ;
\draw    (43.23,431.27) -- (62.05,431.27) ;
\draw  [dash pattern={on 0.84pt off 2.51pt}]  (598.31,49.45) -- (598.31,97.18) ;
\draw  [dash pattern={on 0.84pt off 2.51pt}]  (598.31,78.09) -- (598.31,75.32) ;
\draw [shift={(598.31,73.32)}, rotate = 90] [color={rgb, 255:red, 0; green, 0; blue, 0 }  ][line width=0.75]    (10.93,-3.29) .. controls (6.95,-1.4) and (3.31,-0.3) .. (0,0) .. controls (3.31,0.3) and (6.95,1.4) .. (10.93,3.29)   ;
\draw    (588.9,49.45) -- (607.72,49.45) ;
\draw  [dash pattern={on 0.84pt off 2.51pt}]  (598.31,326.27) -- (598.31,431.27) ;
\draw  [dash pattern={on 0.84pt off 2.51pt}]  (598.31,393.09) -- (598.31,380.77) ;
\draw [shift={(598.31,378.77)}, rotate = 90] [color={rgb, 255:red, 0; green, 0; blue, 0 }  ][line width=0.75]    (10.93,-3.29) .. controls (6.95,-1.4) and (3.31,-0.3) .. (0,0) .. controls (3.31,0.3) and (6.95,1.4) .. (10.93,3.29)   ;
\draw    (588.9,431.27) -- (597.37,431.27) -- (607.72,431.27) ;
\draw    (71.45,364.45) -- (541.86,326.27) ;
\draw [line width=0.75]    (71.45,364.45) -- (304.66,345.53) ;
\draw [shift={(306.66,345.36)}, rotate = 175.36] [color={rgb, 255:red, 0; green, 0; blue, 0 }  ][line width=0.75]    (10.93,-3.29) .. controls (6.95,-1.4) and (3.31,-0.3) .. (0,0) .. controls (3.31,0.3) and (6.95,1.4) .. (10.93,3.29)   ;
\draw    (541.86,383.55) -- (560.68,383.55) ;
\draw  [dash pattern={on 0.84pt off 2.51pt}]  (551.27,383.55) -- (551.27,402.64) ;
\draw  [dash pattern={on 0.84pt off 2.51pt}]  (551.27,431.27) -- (551.27,404.64) ;
\draw [shift={(551.27,402.64)}, rotate = 90] [color={rgb, 255:red, 0; green, 0; blue, 0 }  ][line width=0.75]    (10.93,-3.29) .. controls (6.95,-1.4) and (3.31,-0.3) .. (0,0) .. controls (3.31,0.3) and (6.95,1.4) .. (10.93,3.29)   ;
\draw    (541.86,431.27) -- (560.68,431.27) ;
\draw  [dash pattern={on 0.84pt off 2.51pt}]  (9.41,135.91) -- (10,430) ;
\draw  [dash pattern={on 0.84pt off 2.51pt}]  (9.41,250.45) -- (9.41,242.91) ;
\draw [shift={(9.41,240.91)}, rotate = 90] [color={rgb, 255:red, 0; green, 0; blue, 0 }  ][line width=0.75]    (10.93,-3.29) .. controls (6.95,-1.4) and (3.31,-0.3) .. (0,0) .. controls (3.31,0.3) and (6.95,1.4) .. (10.93,3.29)   ;
\draw    (0,50) -- (18.82,50) ;
\draw  [dash pattern={on 0.84pt off 2.51pt}]  (9.41,50) -- (9.41,135.91) ;
\draw    (1.18,430) -- (20,430) ;
\draw  [dash pattern={on 0.84pt off 2.51pt}]  (639.41,135.91) -- (640,430) ;
\draw  [dash pattern={on 0.84pt off 2.51pt}]  (639.41,250.45) -- (639.41,242.91) ;
\draw [shift={(639.41,240.91)}, rotate = 90] [color={rgb, 255:red, 0; green, 0; blue, 0 }  ][line width=0.75]    (10.93,-3.29) .. controls (6.95,-1.4) and (3.31,-0.3) .. (0,0) .. controls (3.31,0.3) and (6.95,1.4) .. (10.93,3.29)   ;
\draw    (630,50) -- (648.82,50) ;
\draw  [dash pattern={on 0.84pt off 2.51pt}]  (639.41,50) -- (639.41,135.91) ;
\draw    (631.18,430) -- (650,430) ;

\draw (303.48,32.4) node [anchor=north west][inner sep=0.75pt]  [font=\footnotesize]  {$x$};
\draw (303.48,437.13) node [anchor=north west][inner sep=0.75pt]  [font=\footnotesize]  {$y$};
\draw (71.92,366.49) node [anchor=north west][inner sep=0.75pt]  [font=\scriptsize]  {$f_{m}$};
\draw (71.66,347.4) node [anchor=north west][inner sep=0.75pt]  [font=\scriptsize]  {$f_{m-1}$};
\draw (72.98,315.31) node [anchor=north west][inner sep=0.75pt]  [font=\scriptsize]  {$\vdots $};
\draw (72.92,155.45) node [anchor=north west][inner sep=0.75pt]  [font=\scriptsize]  {$f_{1}$};
\draw (71.98,174.58) node [anchor=north west][inner sep=0.75pt]  [font=\scriptsize]  {$f_{2}$};
\draw (72.92,192.63) node [anchor=north west][inner sep=0.75pt]  [font=\scriptsize]  {$f_{3}$};
\draw (72.92,212.72) node [anchor=north west][inner sep=0.75pt]  [font=\scriptsize]  {$f_{4}$};
\draw (72.98,223.85) node [anchor=north west][inner sep=0.75pt]  [font=\scriptsize]  {$\vdots $};
\draw (543.27,329.26) node [anchor=north west][inner sep=0.75pt]  [font=\scriptsize]  {$f_{m}$};
\draw (543,310.17) node [anchor=north west][inner sep=0.75pt]  [font=\scriptsize]  {$f_{m-1}$};
\draw (548.09,275.17) node [anchor=north west][inner sep=0.75pt]  [font=\scriptsize]  {$\vdots $};
\draw (543.33,176.54) node [anchor=north west][inner sep=0.75pt]  [font=\scriptsize]  {$f_{4}$};
\draw (543.33,157.45) node [anchor=north west][inner sep=0.75pt]  [font=\scriptsize]  {$f_{3}$};
\draw (543.39,185.67) node [anchor=north west][inner sep=0.75pt]  [font=\scriptsize]  {$\vdots $};
\draw (576.31,309.22) node [anchor=north west][inner sep=0.75pt]  [font=\footnotesize]  {$\tau $};
\draw (576.31,175.58) node [anchor=north west][inner sep=0.75pt]  [font=\footnotesize]  {$\tau $};
\draw (563.7,127.81) node [anchor=north west][inner sep=0.75pt]  [font=\footnotesize]  {$\ \ \ \tau _{0}$};
\draw (297.72,261.4) node [anchor=north west][inner sep=0.75pt]    {$\vdots $};
\draw (290.19,118.26) node [anchor=north west][inner sep=0.75pt]  [font=\footnotesize]  {$u_{0}$};
\draw (290.19,176.76) node [anchor=north west][inner sep=0.75pt]  [font=\footnotesize]  {$u_{|\tau_0|}$};
\draw (290.13,366.45) node [anchor=north west][inner sep=0.75pt]  [font=\footnotesize]  {$u_{m}$};
\draw (25,232.81) node [anchor=north west][inner sep=0.75pt]  [font=\small]  {$z$};
\draw (-10,237.81) node [anchor=north west][inner sep=0.75pt]  [font=\small]  {$w$};
\draw (643,237.81) node [anchor=north west][inner sep=0.75pt]  [font=\small]  {$w$};
\draw (604.54,194.63) node [anchor=north west][inner sep=0.75pt]  [font=\small]  {$z$};
\draw (21,87.4) node [anchor=north west][inner sep=0.75pt]  [font=\small]  {$w_{1}$};
\draw (22.88,395.04) node [anchor=north west][inner sep=0.75pt]  [font=\small]  {$w_{0}$};
\draw (604.3,366.26) node [anchor=north west][inner sep=0.75pt]  [font=\small]  {$\widetilde{w}_{0}$};
\draw (542.39,118.31) node [anchor=north west][inner sep=0.75pt]  [font=\scriptsize]  {$f_{1}$};
\draw (543.33,138.35) node [anchor=north west][inner sep=0.75pt]  [font=\scriptsize]  {$f_{2}$};
\draw (604.3,70.35) node [anchor=north west][inner sep=0.75pt]  [font=\small]  {$\widetilde{w}_{1}$};
\draw (559.14,395.04) node [anchor=north west][inner sep=0.75pt]  [font=\small]  {$w_{0}$};
\draw (580.39,350.28) node [anchor=north west][inner sep=0.75pt]  [font=\large,rotate=-180]  {$\begin{cases}
 & \\
 & 
\end{cases}$};
\draw (580.39,215.74) node [anchor=north west][inner sep=0.75pt]  [font=\large,rotate=-180]  {$\begin{cases}
 & \\
 & 
\end{cases}$};
\draw (575.18,150.96) node [anchor=north west][inner sep=0.75pt]  [font=\tiny,rotate=-180]  {$\begin{cases}
 & \\
 & 
\end{cases}$};
\draw (551.85,356.95) node [anchor=north west][inner sep=0.75pt]  [font=\footnotesize]  {$\tau $};

\end{tikzpicture}

 \caption{The van~Kampen diagram $\Delta$ for Lemma~\ref{quadratic bound from stack}.}
  \label{vK diagram with corridor block}
\end{figure}
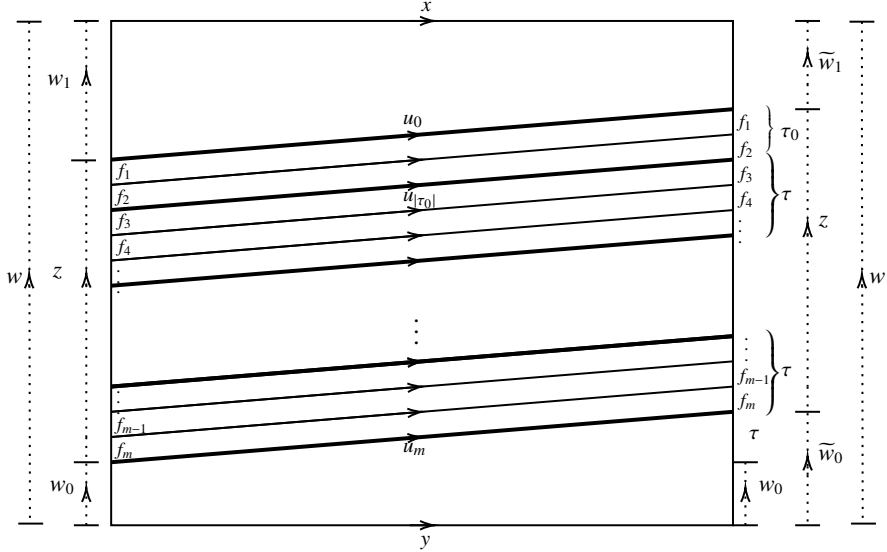

Let $\lambda = \exp(z)$.  Here is an estimate we will call on multiple times. It is contingent on the exponent $\mu_1$ of the first power of $s$ in the  normal form \eqref{normal form of u_0} for $u_0$:
\begin{equation} \label{lambda estimate}
 \mu_1 \neq 0 \ \implies \ |\lambda| \  \leq \ \max \set{|x_1|, |y_1|} \ \leq \ 2n,
 \end{equation}   
To establish this, let $\mu'_1$ be exponent of the first power of $s$ in the normal form of $u_m$.  Then  $\mu'_1 = 2^{\lambda}{\mu_1}$ and, by Lemma~\ref{length detail for L},  $2^{\lambda}{\mu_1} \leq 2^{|y_1|}$.  So   $\lambda \leq |y_1|$ because $\mu_1  \in \Z \smallsetminus \set{0}$.  Also  $2^{-\lambda} {\mu}' = {\mu}$ and ${\mu}' \in \Z \smallsetminus \set{0}$, so likewise,  $-\lambda \leq |x_1|$. So  \eqref{lambda estimate} follows, with   \eqref{u2} giving the final inequality.

We will argue bounds on $|w|$ in cases according to details of the normal form \eqref{L normal form} of $u_0$.

\begin{enumerate}[label=\arabic*.]    

\item \label{Case 1} \emph{Case: $u_0 = s^{\mu_1}$ for some $\mu_1 \in \Z$.}   

\begin{enumerate}[label=\alph*.]
\item \emph{Subcase: $w_0 = \tilde{w}_0$ (as words).}  
If $\mu_1=0$, then $u_0 =u_m=1$ and $z$ is the empty word, for otherwise we could remove it and  $wx = y  w$ in $G_4$ would remain true.  So,     $|w| = |w_0 w_1|  = |w_0| + |w_1|    \leq 2n$ by \eqref{u1}.  

Assume, then, that $\mu_1 \neq 0$.  Then for $\lambda := \exp(z)$, we have $|\lambda| \leq 2n$ by \eqref{lambda estimate}, and $$u_m \ = \ z u_0 z^{-1} \ = \ z s^{\mu_1} z^{-1} \ = \  s^{2^{\lambda} \mu_1} \ = \ a^{\lambda} s^{\mu_1} a^{-\lambda}.$$ So we may take $z = a^{\lambda}$ because doing so could only shorten $w$ and it will remain true that $wx=yw$ in $G_4$.  So,  using \eqref{u1}, we get $|w| = |w_0 a^{\lambda} w_1|  = |w_0| + |w_1| + |\lambda|   \leq 3n$.

\item \label{Case 1b} \emph{Subcase: there is some non-empty word $\tau$ such that $\tilde{w}_0  = w_0 \tau$ (as words).}   
This is the situation illustrated in Figure~\ref{vK diagram with corridor block}.  We have $z = \tau^{\ell} \tau_0$ (as words) for some $\ell \geq 0$ and some proper (perhaps empty) prefix $\tau_0$ of $\tau$.    
Let $e = \exp(\tau)$.  

\begin{enumerate}[label=\roman*.]
\item \label{Case 1bi} \emph{Assume $e =0$ or $\mu_1 =0$.} Define $w' := w_0 \tau_0 w_1$. We claim that $w'x=yw'$.  If $\mu_1 =0$, then $u_0 = u_1 = \cdots = u_m$.  If $e=0$, then $\tau^{-1} u_m  \tau = u_m$ and so $u_{|\tau_0|} = \tau^{-\ell} u_m     \tau^{\ell} = u_m$.  Either way, removing the region whose boundary is labelled by $zu_{|\tau_0|}z^{-1}u_m^{-1}$  from the van~Kampen diagram of Figure~\ref{vK diagram with corridor block} and identifying the path labelled by $u_m$  with that labelled by $u_{|\tau_0|}$ demonstrates that $w'x=yw'$.  So, because $|w|$ is minimal, $\ell=0$ and $w  \ = \  w_0  \tau_0 w_1$.  But  $| w_0  \tau_0| \leq | w_0  \tau| \leq |y|$ and $|w_1| \leq |x|$, so $|w| \leq n$.   

\item \emph{Assume, instead, that $e \neq 0$ and $\mu_1 = 0$.} Let $\lambda := \exp(z)$. Then $|\lambda| \leq 2n$ because $\mu_1 \neq 0$ means \eqref{lambda estimate} applies.  But $\lambda = e \ell + \exp(\tau_0)$, so, because $e \neq 0$ and      $|\exp(\tau_0)| \leq |\tau| \leq n$, we deduce that $|\ell| \leq 3n$. So  $$|w|  \ = \  | w_0  \tau^{\ell} \tau_0 w_1|  \ \leq \  | w_0 | + |w_1|  + (\ell+1) |\tau|  \  \leq  \  n +  (3n+1) n   \ = \  3n^2 + 2n.$$
\end{enumerate}

\item \emph{Subcase: there is some non-empty word $\tau$ such that   $w_0  = \tilde{w}_0 \tau$ (as words).}   
We  argue that  $|w| \leq n$  or that  $|w| \leq 3n^2 + 2n$ like in the prior subcase, mutatis matantis.  

\end{enumerate}

\item \emph{Case, $u_0 =  c^{\gamma_0} s^{\mu_1}   c^{\gamma_1} \cdots s^{\mu_{k}}   c^{\gamma_{k}}$ with $\mu_1 \neq 0$ and  ($\gamma_0 \neq 0$ or $\gamma_1 \neq 0$).} 
 In this case all the normal form words $u_0, \ldots, u_m$ contain at least one $c$-letter and so there are no $d^{\pm 1}$ letters in $z$ and we are in the $z = a^{\lambda}$ case of Lemma~\ref{block}.  So, by \eqref{u1} and \eqref{lambda estimate},  $|w| =  | w_0 | + |z| + |w_1|    \leq  n + |\lambda| \leq 3n$.

\item \label{Case 3} \emph{Case, $u_0 = b^{\beta_0} c^{\gamma_0} \neq 1$.}   
Lemma~\ref{block} applies and we divide into subcases accordingly.  

\begin{enumerate}[label=\alph*.]
\item \label{Case 3a} \emph{Subcase, $z = a^{\lambda_2}d^{\xi} a^{\lambda_1}$ with $\xi \neq 0$.} 

\begin{enumerate}[label=\roman*.]
\item \emph{Assume $w_0 = \tilde{w}_0$ (as words).}   Due to the absence of $s$-letters in $u_0$, $u_{|\lambda_1|} = u_{|\lambda_1|+|\xi|}$ and   $\xi=0$, because otherwise we could shorten $w$.  So this case does not arise. 

\item  \label{Case 3aii} \emph{Assume, instead, that there is some non-empty word $\tau$ such that $\tilde{w}_0  = w_0 \tau$ (as words).}  
Then $z = \tau^{\ell} \tau_0$ (as words) for some $\ell \geq 0$ and some proper (perhaps empty) prefix $\tau_0$ of $\tau$.  There are now two possibilities.  The first is that  $a^{\lambda_2}d^{\xi}$ is a prefix of $\tau$, in which case $|z| \leq 2|\tau| \leq 2n$, and so    $|w| =  | w_0 | + |z| + |w_1|    \leq 4n$.  
The second is that $z = d^{\xi}$ (as words), but then $\gamma_0 =0$ and $u_0 = \cdots =u_m = b^{\beta_0}$,  and like in Case~\ref{Case 1}\ref{Case 1b}\ref{Case 1bi}, $\ell=0$,  $w  \ = \  w_0  \tau_0 w_1$, and $|w| \leq n$.

\item \emph{Assume, instead, that there is some non-empty word $\tau$ such that $w_0  = \tilde{w}_0 \tau$ (as words).}  
The same argument applies, mutatis mutantis.  
\end{enumerate}

\item \emph{Subcase, $z = a^{\lambda}$.}  In this event, $m = |\lambda|$  and   $u_m = b^{\beta_0} c^{\gamma_0 + \lambda \beta_0}$.

\begin{enumerate}[label=\roman*.]
\item \emph{Assume $\beta_0 = 0$.}  Then $m= \lambda=0$, and  $u_0 = \cdots =u_m = c^{\gamma_0}$,  and like in Cases~\ref{Case 1}\ref{Case 1b}\ref{Case 1bi} and \ref{Case 3}\ref{Case 3a}\ref{Case 3aii}, $\ell=0$,    $w  \ = \  w_0  \tau_0 w_1$, and $|w| \leq n$.     

\item \emph{Assume, instead, that  $\beta \neq 0$.}  We can use the bounds    $|\gamma| \leq |x_1|^2$ and $|\gamma_0 + \lambda \beta_0| \leq |y_1|^2$ from Lemma~\ref{length detail for L}, to get that $m= |\lambda| \leq |\lambda| |\beta_0| \leq |\gamma_0| +   |y_1|^2 \leq  |x_1|^2 +   |y_1|^2$.  Finally,   $|w|   \leq  n + (2 n)^2  \leq 5n^2$ by \eqref{u1} and \eqref{u2}.   
\end{enumerate}
\end{enumerate}

\item \emph{Case, $u_0 = b^{\beta_0} c^{\gamma_0} s^{\mu_1} b^{\beta_1} c^{\gamma_1} \cdots s^{\mu_{k}} b^{\beta_{k}} c^{\gamma_{k}}$ with $\mu_1 \neq 0$ and $\beta_i \neq 0$ for some $i$.}   

Lemma~\ref{block} applies and tells us that $z = a^{\lambda_2}d^{\xi} a^{\lambda_1}$ for some $\lambda_1, \xi, \lambda_2 \in \Z$.      

We have $\gamma_i = - \beta_i \lambda_1$ and $\gamma'_i =  \beta_i \lambda_2$.  Lemma~\ref{length detail for L}  applies to $x_1 =u_0$ and to $y_1 =u_m$ and gives us that $\gamma_i \leq |x_1|^2$ and $\gamma'_i \leq |y_1|^2$.  So, because $\beta_i \neq 0$, we learn that    $|\lambda_1| \leq  |x_1|^2$ and  $|\lambda_2| \leq |y_1|^2$. 

Now, the exponent   of the first power  of $s$ in the normal form of   $u_m$ is   $2^{\lambda_1 + \xi + \lambda_2} \mu_{1}$.  By  Lemma~\ref{length detail for L}, $|2^{\lambda_1 + \xi + \lambda_2} \mu_{1}| \leq 2^{|y_1|}$. And, because $\mu_1 \neq 0$, we deduce that  $|\lambda_1 + \xi + \lambda_2| \leq |y_1|$.  So $|\xi| \leq |y_1| +  |\lambda_1| + |\lambda_2| \leq |y_1| +  |x_1|^2 + |y_1|^2$ and $$|z| \  = \  |\lambda_1| + |\xi| + |\lambda_2| \ \leq \ |y_1| +  2|x_1|^2 + 2|y_1|^2 \ \leq \ n + 8n^2.$$    
Finally,   $|w|  \leq  n + |z| \leq  2n + 8n^2$ by \eqref{u1} and \eqref{u2}. 
\end{enumerate}
\end{proof}

 \begin{rem}
         The defining relations of $G_4$ that contain $c$ are $[a,b]=c,[a,c]=1$, and $[b,c]=1$. The last two of these form corridors in diagrams, except that they may begin  or end at a cell labelled $[a,b]=c$ instead of at the boundary of the diagram.  We will call these sequences of cells \emph{$c$-segments}, and the cells corresponding to $[a,b]=c$ the endpoints of a $c$-segment. A $c$-segment  can close up and  form an annulus (which could be contractible or non-contractible within an annular diagram). In general, if a $c$-segment has no endpoints, then they behave exactly like corridors, so we will refer to them as $c$-arches, radial $c$-segments, and so on. 
\end{rem}

\begin{prop}\label{G4_CL_Upper}   $\CL_{G_4}(n) \ \preceq \ n^2$.
\end{prop}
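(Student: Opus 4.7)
We combine the amalgamated product structure $G_4 = A \ast_C B$ with $C = \langle a, d \rangle$ (Lemma~\ref{structureG4}) together with the preceding lemmas: $\CL_A(n) \preceq n^2$ (Lemma~\ref{Aquadratic}), $\CL_B(n) \preceq n$ (Lemma~\ref{Blinear}), and most crucially Lemma~\ref{quadratic bound from stack}, which bounds the length of conjugators that can be realized by words in $\{a,d\}^{\pm 1}$. Let $u, v$ be conjugate in $G_4$ with $n = |u|+|v|$. After cyclic conjugation (which changes conjugator length by at most $n$), we may assume $u$ and $v$ are cyclically reduced in the amalgamated product sense and split into two cases.

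\emph{Case 1: $u$ represents an element of $A$ or of $B$.} Then $v$ represents an element of the same subgroup, and by Corollary~\ref{G4Cor} (parts \eqref{conjInA} and \eqref{conjInB}), $u$ and $v$ are conjugate within that subgroup. Since $A$ and $B$ are undistorted in $G_4$ (Corollary~\ref{G4Cor}(\ref{ABUndistorted})), we may replace $u$ and $v$ by equivalent words in the corresponding subgroup's generators of length $O(n)$. Then $\CL_A(n) \preceq n^2$ or $\CL_B(n) \preceq n$ produces a conjugator of length $\preceq n^2$.

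\emph{Case 2: The amalgamated product syllable length of $u$ is at least $2$.} By the conjugacy theorem for amalgamated free products (equivalently, by reading off a reduced annular diagram for $(u,v)$ with no contractible $b$- or $d$-rings, per Lemmas~\ref{annulardiagramlemma} and \ref{no b-rings}), $v$ is obtained from $u$ by a cyclic permutation $\sigma$ of length at most $n$, followed by conjugation by an element $\gamma_C \in C$. Setting $x := \sigma u \sigma^{-1}$, a word of length at most $2n$, we have the equation $\gamma_C x = v \gamma_C$ in $G_4$ with $\gamma_C$ representable as a word in $\{a,d\}^{\pm 1}$. Lemma~\ref{quadratic bound from stack} then produces such a $\gamma_C$ of length $\preceq (|x|+|v|)^2 \preceq n^2$, so the total conjugator $\gamma_C \sigma$ has length $\preceq n^2$.

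The main obstacle is Case~2: one must justify that a conjugator realizable by a word in $\{a,d\}^{\pm 1}$ actually exists, so that Lemma~\ref{quadratic bound from stack} may be invoked. This follows from standard amalgamated product theory applied to $G_4 = A \ast_C B$: when both $u$ and $v$ have amalgamation syllable length $\geq 2$, every conjugator decomposes, up to cyclic shift, as an element of the amalgamated subgroup $C = \langle a, d \rangle$. The real content is then absorbed into Lemma~\ref{quadratic bound from stack}, whose proof occupies the bulk of this section and supplies precisely the quadratic bound needed.
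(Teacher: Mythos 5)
Your overall strategy---splitting on the amalgam $G_4 = A \ast_C B$ and funnelling the syllable-length $\geq 2$ case into Lemma~\ref{quadratic bound from stack}---is genuinely different from the paper's, which instead exploits the HNN-splitting of $G_4$ over $E$ with stable letter $b$ and analyses the $b$-corridors and $c$-segments of an annular diagram directly. Your Case~2 does essentially match the final subcase of the paper's proof (radial $b$-corridors with no connecting $c$-segment), where Lemma~\ref{quadratic bound from stack} is indeed the engine.

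The gap is in Case~1. When $u$ and $v$ are cyclically reduced of syllable length at most one, it does not follow that they lie in the same factor: take $u = bab^{-1}$, which equals $c^{-1}a \in A \smallsetminus C$, and $v = s^{-1}as$, which equals $sa \in B \smallsetminus A$; these are conjugate in $G_4$ (both to $a$), yet Corollary~\ref{G4Cor}(\ref{conjInA})--(\ref{conjInB}) says nothing about them. In general the conjugacy theorem for amalgamated products only provides a chain $u \sim_A c_1 \sim_B c_2 \sim_A \cdots \sim v$ with the intermediate elements $c_i \in C$, and neither the number of steps nor the word lengths of the $c_i$ is controlled by your argument; bounding these is exactly the hard part of conjugator-length estimates over amalgams (and is one reason Proposition~\ref{freeprodannular} is stated only for free products, where $C$ is trivial). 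The paper's first two cases---no radial or non-contractible $b$-corridor, so that one reduces to $\CL_E$ via Lemma~\ref{CL of E}; and a non-contractible $b$-annulus, where intermediate words $\tilde u, \tilde v \in A$ are produced with quantitative control coming from the $c$-segment analysis together with Lemma~\ref{Blinear} and Corollary~\ref{G4Cor}---are precisely the machinery that tames this chain, and they constitute most of the proof rather than being ``absorbed into Lemma~\ref{quadratic bound from stack}.'' A secondary, fixable point: even in Case~2 you should justify that a length-$n$ word can be converted into a reduced syllable decomposition whose syllables are concatenations of subwords of $u$, so that the cyclic-permutation conjugator really has length at most $n$.
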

\begin{proof}
    Suppose words $u$ and  $v$ represent conjugate elements of $G_4$.  Let   $n = |u|+|v|$  and let $\Delta$ be an annular diagram for $u$ and $v$. We can assume $\Delta$ contains no  contractible $b$-corridor---any such corridor would have along its outer boundary $\delta$ a word on $\{a,c,d\}$  which represents the identity in $G_4$; by Lemma~\ref{structureG4}(\ref{A subgroup property}) we could replace the subdiagram bounded by $\delta$ with a van~Kampen diagram over $\langle a, c, d \mid  [a,c] \rangle$, without changing $u$ or $v$. Additionally, by Lemma \ref{length detail}, we may replace every $b$-arch in $\Delta$ to obtain boundary words $u'$ and $v'$ (equal, up to cyclic conjugacy, to $u$ and $v$ respectively) such that $|u'|+|v'|\leq 3n^2$. More specifically, we also have $|u'|_a+|v'|_a+|u'|_d+|v'|_d\leq n$.

    We consider three cases.

   \begin{enumerate}[label=\arabic*.]     
\item    \emph{Case: $\Delta$  has no radial or non-contractible $b$-corridor.} In this event, $\Delta$ contains no $b$-edges whatsoever, so $u'$ and $v'$  represent conjugate elements of   $E   =   \langle a, c,d,s \mid  [a,c],  s^a s^{-2}, s^d   s^{-2} \rangle$.   By Lemma~\ref{CL of E},  $\CL(u',v')$ is at most a constant times $|u'| + |v'|$, and so $\CL(u,v) \leq \lambda n^2$ for a suitable constant $\lambda >0$.

        \begin{figure}
            \centering

\tikzset{every picture/.style={line width=0.75pt}} 

\begin{tikzpicture}[x=0.5pt,y=0.5pt,yscale=-.75,xscale=.75]

\draw   (50,50) -- (600,50) -- (600,400) -- (50,400) -- cycle ;
\draw    (50,230) -- (50,222) ;
\draw [shift={(50,220)}, rotate = 90] [color={rgb, 255:red, 0; green, 0; blue, 0 }  ][line width=0.75]    (10.93,-4.9) .. controls (6.95,-2.3) and (3.31,-0.67) .. (0,0) .. controls (3.31,0.67) and (6.95,2.3) .. (10.93,4.9)   ;
\draw    (600,230) -- (600,222) ;
\draw [shift={(600,220)}, rotate = 90] [color={rgb, 255:red, 0; green, 0; blue, 0 }  ][line width=0.75]    (10.93,-4.9) .. controls (6.95,-2.3) and (3.31,-0.67) .. (0,0) .. controls (3.31,0.67) and (6.95,2.3) .. (10.93,4.9)   ;
\draw   (150,350) -- (200,350) -- (200,400) -- (150,400) -- cycle ;
\draw   (150,300) -- (200,300) -- (200,350) -- (150,350) -- cycle ;
\draw   (150,250) -- (200,250) -- (200,300) -- (150,300) -- cycle ;
\draw   (150,200) -- (200,200) -- (200,250) -- (150,250) -- cycle ;
\draw   (150,150) -- (200,150) -- (200,200) -- (150,200) -- cycle ;
\draw   (150,100) -- (200,100) -- (200,150) -- (150,150) -- cycle ;
\draw   (150,50) -- (200,50) -- (200,100) -- (150,100) -- cycle ;
\draw    (320,400) -- (328,400) ;
\draw [shift={(330,400)}, rotate = 180] [color={rgb, 255:red, 0; green, 0; blue, 0 }  ][line width=0.75]    (10.93,-4.9) .. controls (6.95,-2.3) and (3.31,-0.67) .. (0,0) .. controls (3.31,0.67) and (6.95,2.3) .. (10.93,4.9)   ;
\draw    (320,50) -- (328,50) ;
\draw [shift={(330,50)}, rotate = 180] [color={rgb, 255:red, 0; green, 0; blue, 0 }  ][line width=0.75]    (10.93,-4.9) .. controls (6.95,-2.3) and (3.31,-0.67) .. (0,0) .. controls (3.31,0.67) and (6.95,2.3) .. (10.93,4.9)   ;
\draw   (450,350) -- (500,350) -- (500,400) -- (450,400) -- cycle ;
\draw   (450,300) -- (500,300) -- (500,350) -- (450,350) -- cycle ;
\draw   (450,250) -- (500,250) -- (500,300) -- (450,300) -- cycle ;
\draw   (450,200) -- (500,200) -- (500,250) -- (450,250) -- cycle ;
\draw   (450,150) -- (500,150) -- (500,200) -- (450,200) -- cycle ;
\draw   (450,100) -- (500,100) -- (500,150) -- (450,150) -- cycle ;
\draw   (450,50) -- (500,50) -- (500,100) -- (450,100) -- cycle ;

\draw (167,80.4) node [anchor=north west][inner sep=0.75pt]    {$c$};
\draw (167,130.4) node [anchor=north west][inner sep=0.75pt]    {$c$};
\draw (167,180.4) node [anchor=north west][inner sep=0.75pt]    {$c$};
\draw (167,230.4) node [anchor=north west][inner sep=0.75pt]    {$c$};
\draw (167,280.4) node [anchor=north west][inner sep=0.75pt]    {$c$};
\draw (167,330.4) node [anchor=north west][inner sep=0.75pt]    {$c$};
\draw (167,380.4) node [anchor=north west][inner sep=0.75pt]    {$c$};
\draw (126,200) node [anchor=north west][inner sep=0.75pt]  [font=\large]  {$\begin{cases}
 & \\
 & 
\end{cases}$};
\draw (100,230) node [anchor=north west][inner sep=0.75pt]    {$a^{m}$};
\draw (524,293) node [anchor=north west][inner sep=0.75pt]  [font=\large,rotate=-180]  {$\begin{cases}
 & \\
 & 
\end{cases}$};
\draw (525,230) node [anchor=north west][inner sep=0.75pt]    {$a^{m'}$};
\draw (305,60.4) node [anchor=north west][inner sep=0.75pt]    {$w_1$};
\draw (305,365) node [anchor=north west][inner sep=0.75pt]    {$w_2$};
\draw (467,80.4) node [anchor=north west][inner sep=0.75pt]    {$c$};
\draw (467,130.4) node [anchor=north west][inner sep=0.75pt]    {$c$};
\draw (467,180.4) node [anchor=north west][inner sep=0.75pt]    {$c$};
\draw (467,230.4) node [anchor=north west][inner sep=0.75pt]    {$c$};
\draw (467,280.4) node [anchor=north west][inner sep=0.75pt]    {$c$};
\draw (467,330.4) node [anchor=north west][inner sep=0.75pt]    {$c$};
\draw (467,380.4) node [anchor=north west][inner sep=0.75pt]    {$c$};
\draw (310,20) node [anchor=north west][inner sep=0.75pt]    {$u$};
\draw (310,410) node [anchor=north west][inner sep=0.75pt]    {$v$};

\end{tikzpicture}

            \caption{Two radial $c$-segments}
            \label{fig:radial-c-segments}
        \end{figure}
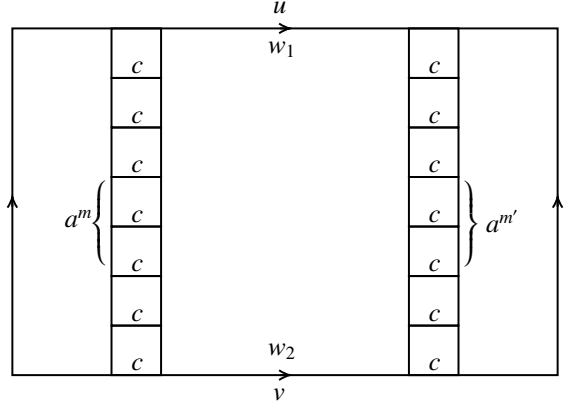

\medskip

\item  \emph{Case: $\Delta$ has at least one non-contractible $b$-annulus.}

We claim that there exists a constant  $\lambda >0$ (independent of $u'$ and $v'$) and words $\tilde{u}$ and $\tilde{v}$ on $\{ a,b,c,d \}$ such that $u' \sim \tilde{u}$ and $v' \sim \tilde{v}$ in $G_4$, and  
\begin{align}
|\tilde{u}|_A, |\tilde{v}|_A & \leq \lambda n + \lambda, \label{tildeu bound} \\ 
\CL(u', \tilde{u}), \CL(v', \tilde{v}) & \leq   3\lambda n^2. \label{utilde u bound} 
\end{align}
We will argue this for $\tilde{v}$ in  three subcases.  The argument for $\tilde{u}$ is the same mutatis mutantis.  

Let $\beta$ be the outermost $b$-corridor.

\begin{enumerate}[label=\alph*.]  
    \item  \label{case 1}
Suppose there is a $c$-segment running from the outer side of $\beta$ to the outer boundary, as in Figure \ref{fig:c_seg_meets_b_corr}. Then, up to cyclic permutation, $v'$ is equal in $G_4$ to the word $\tilde{v}$ along the dotted line, which is an element of $A$. Every $s$-letter in $v'$ corresponds lexically to an $s$-letter in $v$, so there is a path in $v'$ from the $c$-edge of the given $c$-segment to some vertex belonging to our original word $v$ which does not contain any $s$-edges. Thus we may assume that $\tilde{v}$ is conjugate to $v'$ (and thus to $v$) in $A$. Since $A$ is undistorted in $G_4$ (Corollary~\ref{G4Cor}), there therefore exists a constant $\lambda >0$ such that $|\tilde{v}|_A\leq  \lambda |v|+ \lambda\leq \lambda n+ \lambda$.

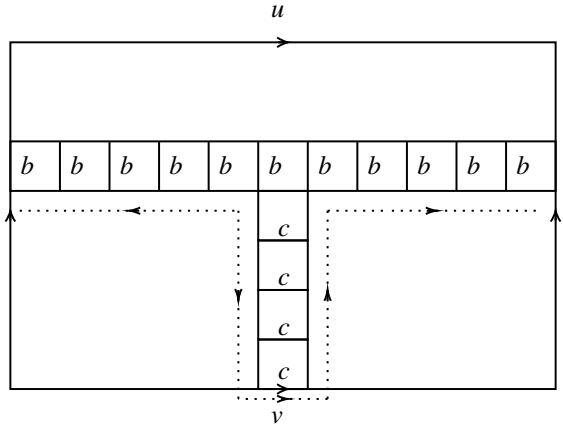
\begin{figure}
    \centering

\tikzset{every picture/.style={line width=0.75pt}} 

\begin{tikzpicture}[x=0.5pt,y=0.5pt,yscale=-.75,xscale=.75]

\draw   (50,50) -- (600,50) -- (600,400) -- (50,400) -- cycle ;
\draw    (50,230) -- (50,222) ;
\draw [shift={(50,220)}, rotate = 90] [color={rgb, 255:red, 0; green, 0; blue, 0 }  ][line width=0.75]    (10.93,-4.9) .. controls (6.95,-2.3) and (3.31,-0.67) .. (0,0) .. controls (3.31,0.67) and (6.95,2.3) .. (10.93,4.9)   ;
\draw    (600,230) -- (600,222) ;
\draw [shift={(600,220)}, rotate = 90] [color={rgb, 255:red, 0; green, 0; blue, 0 }  ][line width=0.75]    (10.93,-4.9) .. controls (6.95,-2.3) and (3.31,-0.67) .. (0,0) .. controls (3.31,0.67) and (6.95,2.3) .. (10.93,4.9)   ;
\draw   (300,350) -- (350,350) -- (350,400) -- (300,400) -- cycle ;
\draw   (300,300) -- (350,300) -- (350,350) -- (300,350) -- cycle ;
\draw   (300,250) -- (350,250) -- (350,300) -- (300,300) -- cycle ;
\draw   (300,200) -- (350,200) -- (350,250) -- (300,250) -- cycle ;
\draw    (320,400) -- (328,400) ;
\draw [shift={(330,400)}, rotate = 180] [color={rgb, 255:red, 0; green, 0; blue, 0 }  ][line width=0.75]    (10.93,-4.9) .. controls (6.95,-2.3) and (3.31,-0.67) .. (0,0) .. controls (3.31,0.67) and (6.95,2.3) .. (10.93,4.9)   ;
\draw    (320,50) -- (328,50) ;
\draw [shift={(330,50)}, rotate = 180] [color={rgb, 255:red, 0; green, 0; blue, 0 }  ][line width=0.75]    (10.93,-4.9) .. controls (6.95,-2.3) and (3.31,-0.67) .. (0,0) .. controls (3.31,0.67) and (6.95,2.3) .. (10.93,4.9)   ;
\draw   (50,200) -- (50,150) -- (600,150) -- (600,200) -- (50,200) -- cycle ;
\draw    (100,150) -- (100,200) ;
\draw    (150,150) -- (150,200) ;
\draw    (200,150) -- (200,200) ;
\draw    (250,150) -- (250,200) ;
\draw    (300,150) -- (300,200) ;
\draw    (350,150) -- (350,200) ;
\draw    (400,150) -- (400,200) ;
\draw    (450,150) -- (450,200) ;
\draw    (500,150) -- (500,200) ;
\draw    (550,150) -- (550,200) ;
\draw  [dash pattern={on 0.84pt off 2.51pt}]  (280,220) -- (280,410) ;
\draw  [dash pattern={on 0.84pt off 2.51pt}]  (370,410) -- (280,410) ;
\draw  [dash pattern={on 0.84pt off 2.51pt}]  (370,220) -- (370,410) ;
\draw  [dash pattern={on 0.84pt off 2.51pt}]  (370,220) -- (430,220) ;
\draw    (325,410) -- (328,410) ;
\draw [shift={(330,410)}, rotate = 180] [color={rgb, 255:red, 0; green, 0; blue, 0 }  ][line width=0.75]    (10.93,-3.29) .. controls (6.95,-1.4) and (3.31,-0.3) .. (0,0) .. controls (3.31,0.3) and (6.95,1.4) .. (10.93,3.29)   ;
\draw    (180,220) -- (172,220) ;
\draw [shift={(170,220)}, rotate = 360] [color={rgb, 255:red, 0; green, 0; blue, 0 }  ][line width=0.75]    (10.93,-3.29) .. controls (6.95,-1.4) and (3.31,-0.3) .. (0,0) .. controls (3.31,0.3) and (6.95,1.4) .. (10.93,3.29)   ;
\draw    (370,310) -- (370,302) ;
\draw [shift={(370,300)}, rotate = 90] [color={rgb, 255:red, 0; green, 0; blue, 0 }  ][line width=0.75]    (10.93,-3.29) .. controls (6.95,-1.4) and (3.31,-0.3) .. (0,0) .. controls (3.31,0.3) and (6.95,1.4) .. (10.93,3.29)   ;
\draw    (280,300) -- (280,308) ;
\draw [shift={(280,310)}, rotate = 270] [color={rgb, 255:red, 0; green, 0; blue, 0 }  ][line width=0.75]    (10.93,-3.29) .. controls (6.95,-1.4) and (3.31,-0.3) .. (0,0) .. controls (3.31,0.3) and (6.95,1.4) .. (10.93,3.29)   ;
\draw  [dash pattern={on 0.84pt off 2.51pt}]  (580,220) -- (430,220) ;
\draw    (470,220) -- (478,220) ;
\draw [shift={(480,220)}, rotate = 180] [color={rgb, 255:red, 0; green, 0; blue, 0 }  ][line width=0.75]    (10.93,-3.29) .. controls (6.95,-1.4) and (3.31,-0.3) .. (0,0) .. controls (3.31,0.3) and (6.95,1.4) .. (10.93,3.29)   ;
\draw  [dash pattern={on 0.84pt off 2.51pt}]  (276,220) -- (270,220) -- (56,220) ;

\draw (317,230.4) node [anchor=north west][inner sep=0.75pt]    {$c$};
\draw (317,280.4) node [anchor=north west][inner sep=0.75pt]    {$c$};
\draw (317,330.4) node [anchor=north west][inner sep=0.75pt]    {$c$};
\draw (317,375.4) node [anchor=north west][inner sep=0.75pt]    {$c$};
\draw (51,162.4) node [anchor=north west][inner sep=0.75pt]    {$\ b$};
\draw (101,162.4) node [anchor=north west][inner sep=0.75pt]    {$\ b$};
\draw (151,162.4) node [anchor=north west][inner sep=0.75pt]    {$\ b$};
\draw (201,162.4) node [anchor=north west][inner sep=0.75pt]    {$\ b$};
\draw (251,162.4) node [anchor=north west][inner sep=0.75pt]    {$\ b$};
\draw (301,162.4) node [anchor=north west][inner sep=0.75pt]    {$\ b$};
\draw (351,162.4) node [anchor=north west][inner sep=0.75pt]    {$\ b$};
\draw (401,162.4) node [anchor=north west][inner sep=0.75pt]    {$\ b$};
\draw (451,162.4) node [anchor=north west][inner sep=0.75pt]    {$\ b$};
\draw (501,162.4) node [anchor=north west][inner sep=0.75pt]    {$\ b$};
\draw (551,162.4) node [anchor=north west][inner sep=0.75pt]    {$\ b$};
\draw (310,10) node [anchor=north west][inner sep=0.75pt]    {$u$};
\draw (310,420) node [anchor=north west][inner sep=0.75pt]    {$v$};

\end{tikzpicture}

    \caption{$c$-segment meeting a non-contractable $b$-corridor}
    \label{fig:c_seg_meets_b_corr}
\end{figure}

\item 
Suppose there are no such $c$-segments. Then every $c$-segment leaving $\beta$ must return to it, so we have the situation illustrated in Figure \ref{fig:no c segment between B and boundary}. Let $\tilde{v}\in C = F( a, d)$ (recall, Lemma~\ref{structureG4}\eqref{C subgroup property}) be the word along the dotted line. Every $a$- or $d$-letter of the reduced form of $\tilde{v}$ is part of a corridor running from $\beta$ to the outer boundary,  so $|\tilde{v}|_C\leq |v'|_a+|v'|_d\leq n$.

Now, every $c^{\pm 1}$ in $v'$ labels the initial edge of a $c$-arch in $\Delta'$, and so excising these, we learn that that a cyclic conjugate of $v'$  equals in $G_4$ a word  $v'' \in \langle a,d,s\rangle = B$. So $\CL(v',v'') \leq |v'| + |v''|$ and,  since the words along the sides of a $c$-arch are the same, $|v''| \leq |v'|$. Since $v''$ and $\tilde{v}$ represent elements of $B$ that are conjugate in $G_4$, they are conjugate in $B$ by Corollary~\ref{G4Cor}(\ref{conjInB}). By Lemma~\ref{Blinear},  $\CL(v'',\tilde{v})$  is at most a constant times $|v''| + |\tilde{v}|$  (in $B$ and therefore in $G_4$). Combining these estimates give that $\CL(v',\tilde{v}) \leq   \CL(v',v'')  +  \CL(v'',\tilde{v})$  is at most $\lambda|v'|$, and so at most $3\lambda n^2$, for a suitable constant $\lambda>0$.

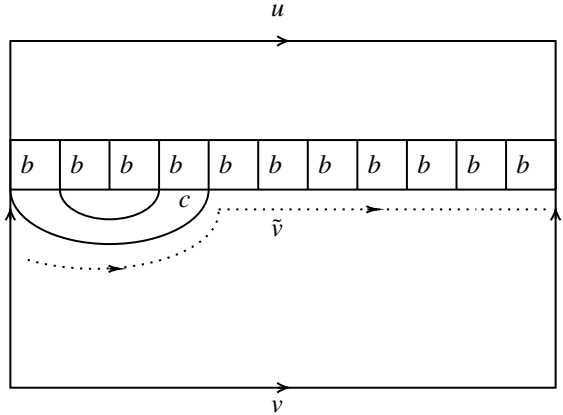
\begin{figure}
    \centering

\tikzset{every picture/.style={line width=0.75pt}} 

\begin{tikzpicture}[x=0.5pt,y=0.5pt,yscale=-.75,xscale=.75]

\draw   (50,50) -- (600,50) -- (600,400) -- (50,400) -- cycle ;
\draw    (50,230) -- (50,222) ;
\draw [shift={(50,220)}, rotate = 90] [color={rgb, 255:red, 0; green, 0; blue, 0 }  ][line width=0.75]    (10.93,-4.9) .. controls (6.95,-2.3) and (3.31,-0.67) .. (0,0) .. controls (3.31,0.67) and (6.95,2.3) .. (10.93,4.9)   ;
\draw    (600,230) -- (600,222) ;
\draw [shift={(600,220)}, rotate = 90] [color={rgb, 255:red, 0; green, 0; blue, 0 }  ][line width=0.75]    (10.93,-4.9) .. controls (6.95,-2.3) and (3.31,-0.67) .. (0,0) .. controls (3.31,0.67) and (6.95,2.3) .. (10.93,4.9)   ;
\draw    (320,400) -- (328,400) ;
\draw [shift={(330,400)}, rotate = 180] [color={rgb, 255:red, 0; green, 0; blue, 0 }  ][line width=0.75]    (10.93,-4.9) .. controls (6.95,-2.3) and (3.31,-0.67) .. (0,0) .. controls (3.31,0.67) and (6.95,2.3) .. (10.93,4.9)   ;
\draw    (320,50) -- (328,50) ;
\draw [shift={(330,50)}, rotate = 180] [color={rgb, 255:red, 0; green, 0; blue, 0 }  ][line width=0.75]    (10.93,-4.9) .. controls (6.95,-2.3) and (3.31,-0.67) .. (0,0) .. controls (3.31,0.67) and (6.95,2.3) .. (10.93,4.9)   ;
\draw   (50,200) -- (50,150) -- (600,150) -- (600,200) -- (50,200) -- cycle ;
\draw    (100,150) -- (100,200) ;
\draw    (150,150) -- (150,200) ;
\draw    (200,150) -- (200,200) ;
\draw    (250,150) -- (250,200) ;
\draw    (300,150) -- (300,200) ;
\draw    (350,150) -- (350,200) ;
\draw    (400,150) -- (400,200) ;
\draw    (450,150) -- (450,200) ;
\draw    (500,150) -- (500,200) ;
\draw    (550,150) -- (550,200) ;
\draw  [dash pattern={on 0.84pt off 2.51pt}]  (260,220) -- (520,220) ;
\draw  [draw opacity=0] (200,200) .. controls (200,216.57) and (177.61,230) .. (150,230) .. controls (122.39,230) and (100,216.57) .. (100,200) -- (150,200) -- cycle ; \draw   (200,200) .. controls (200,216.57) and (177.61,230) .. (150,230) .. controls (122.39,230) and (100,216.57) .. (100,200) ;  
\draw  [draw opacity=0] (250,200) .. controls (250,200) and (250,200) .. (250,200) .. controls (250,230.38) and (205.23,255) .. (150,255) .. controls (94.77,255) and (50,230.38) .. (50,200) -- (150,200) -- cycle ; \draw   (250,200) .. controls (250,200) and (250,200) .. (250,200) .. controls (250,230.38) and (205.23,255) .. (150,255) .. controls (94.77,255) and (50,230.38) .. (50,200) ;  
\draw  [dash pattern={on 0.84pt off 2.51pt}]  (590,220) -- (520,220) ;
\draw    (410,220) -- (418,220) ;
\draw [shift={(420,220)}, rotate = 180] [color={rgb, 255:red, 0; green, 0; blue, 0 }  ][line width=0.75]    (10.93,-3.29) .. controls (6.95,-1.4) and (3.31,-0.3) .. (0,0) .. controls (3.31,0.3) and (6.95,1.4) .. (10.93,3.29)   ;
\draw  [draw opacity=0][dash pattern={on 0.84pt off 2.51pt}] (260,222.5) .. controls (260,254.26) and (204.04,280) .. (135,280) .. controls (110,280) and (86.71,276.62) .. (67.18,270.81) -- (135,222.5) -- cycle ; \draw  [dash pattern={on 0.84pt off 2.51pt}] (260,222.5) .. controls (260,254.26) and (204.04,280) .. (135,280) .. controls (110,280) and (86.71,276.62) .. (67.18,270.81) ;  
\draw    (150,280) -- (158,280) ;
\draw [shift={(160,280)}, rotate = 180] [color={rgb, 255:red, 0; green, 0; blue, 0 }  ][line width=0.75]    (10.93,-3.29) .. controls (6.95,-1.4) and (3.31,-0.3) .. (0,0) .. controls (3.31,0.3) and (6.95,1.4) .. (10.93,3.29)   ;

\draw (51,162.4) node [anchor=north west][inner sep=0.75pt]    {$\ b$};
\draw (101,162.4) node [anchor=north west][inner sep=0.75pt]    {$\ b$};
\draw (151,162.4) node [anchor=north west][inner sep=0.75pt]    {$\ b$};
\draw (201,162.4) node [anchor=north west][inner sep=0.75pt]    {$\ b$};
\draw (251,162.4) node [anchor=north west][inner sep=0.75pt]    {$\ b$};
\draw (301,162.4) node [anchor=north west][inner sep=0.75pt]    {$\ b$};
\draw (351,162.4) node [anchor=north west][inner sep=0.75pt]    {$\ b$};
\draw (401,162.4) node [anchor=north west][inner sep=0.75pt]    {$\ b$};
\draw (451,162.4) node [anchor=north west][inner sep=0.75pt]    {$\ b$};
\draw (501,162.4) node [anchor=north west][inner sep=0.75pt]    {$\ b$};
\draw (551,162.4) node [anchor=north west][inner sep=0.75pt]    {$\ b$};
\draw (217,202.4) node [anchor=north west][inner sep=0.75pt]    {$c$};
\draw (310,10) node [anchor=north west][inner sep=0.75pt]    {$u$};
\draw (310,410) node [anchor=north west][inner sep=0.75pt]    {$v$};
\draw (310,227) node [anchor=north west][inner sep=0.75pt]    {$\tilde{v}$};

\end{tikzpicture}

    \caption{no $c$-segment running between a non-contractable $b$-corridor and the boundary}
    \label{fig:no c segment between B and boundary}
\end{figure}
\end{enumerate}

With these two cases complete, note that Lemma~\ref{Aquadratic} applies to  $\tilde{u}$ and $\tilde{v}$, since they represent conjugate elements of $A$. Moreover, this Lemma combines with  \eqref{tildeu bound} and \eqref{utilde u bound} to give
 $$\CL(u,v) \ \leq \  \CL(u,\tilde{u}) + \CL(\tilde{u},\tilde{v}) + \CL(\tilde{v},v)  \ \leq \ \mu n^2,$$ for a suitable constant $\mu>0$, as required.

\medskip

\item \emph{Case:  $\Delta$ contains a radial $b$-corridor.} 
If there is a $c$-segment running between each pair of consecutive radial $b$-corridors, then we have the situation illustrated in Figure~\ref{fig:connected radial b-corridors}. Up to cyclic conjugation, $u$ and $v$ are equal in $G_4$ to the words along the two dotted lines, which are words on $\{a,b,c,d\}$.  So there are cyclic conjugates $u_0$ and $v_0$ of $u$ and $v$, respectively, that represent elements of $\langle a,b,c,d \rangle$ and  the minimal  length words  $\tilde{u}$ and $\tilde{v}$ on $\{a,b,c,d\}$ equaling $u_0$ and $v_0$ in $G_4$, respectively, have   $|\tilde{u}| + |\tilde{v}| \preceq n$ by  Corollary~\ref{G4Cor}\eqref{ABUndistorted}.  And because $u \sim v$ in $G_4$ and $A$ is a retract of $G_4$, we have  $\tilde{u} \sim \tilde{v}$ in $A$.  And then, by Lemma~\ref{Aquadratic}, there exists a word $w$ such that  $\tilde{u} w  = w \tilde{v}$ in $A$ and $|w| \leq  \lambda n^2$, for a suitable constant $\lambda >0$.  But then,  $u_0 w  = w  v_0$ in $G_4$, and the claim follows.   

Alternatively, suppose some pair of radial $b$-corridors has no $c$-segment running between them, as shown in Figure \ref{fig:disconnected radial b-corridors}. Then, up to cyclic conjugation, $u$ and $v$ are \textit{conjugate}  via a word $w$ on $a$ and $d$ (the word read along the dotted line in the figure). The claim then follows from Lemma~\ref{quadratic bound from stack} (applied to the equality $u w  = w  v$). 
\end{enumerate}
 \end{proof}

\begin{figure}
    \centering

\tikzset{every picture/.style={line width=0.75pt}} 

\begin{tikzpicture}[x=0.5pt,y=0.5pt,yscale=-.75,xscale=.75]

\draw   (50,50) -- (600,50) -- (600,400) -- (50,400) -- cycle ;
\draw    (50,230) -- (50,222) ;
\draw [shift={(50,220)}, rotate = 90] [color={rgb, 255:red, 0; green, 0; blue, 0 }  ][line width=0.75]    (10.93,-4.9) .. controls (6.95,-2.3) and (3.31,-0.67) .. (0,0) .. controls (3.31,0.67) and (6.95,2.3) .. (10.93,4.9)   ;
\draw    (600,230) -- (600,222) ;
\draw [shift={(600,220)}, rotate = 90] [color={rgb, 255:red, 0; green, 0; blue, 0 }  ][line width=0.75]    (10.93,-4.9) .. controls (6.95,-2.3) and (3.31,-0.67) .. (0,0) .. controls (3.31,0.67) and (6.95,2.3) .. (10.93,4.9)   ;
\draw   (450,350) -- (500,350) -- (500,400) -- (450,400) -- cycle ;
\draw   (450,300) -- (500,300) -- (500,350) -- (450,350) -- cycle ;
\draw   (450,250) -- (500,250) -- (500,300) -- (450,300) -- cycle ;
\draw   (450,200) -- (500,200) -- (500,250) -- (450,250) -- cycle ;
\draw    (320,400) -- (328,400) ;
\draw [shift={(330,400)}, rotate = 180] [color={rgb, 255:red, 0; green, 0; blue, 0 }  ][line width=0.75]    (10.93,-4.9) .. controls (6.95,-2.3) and (3.31,-0.67) .. (0,0) .. controls (3.31,0.67) and (6.95,2.3) .. (10.93,4.9)   ;
\draw    (320,50) -- (328,50) ;
\draw [shift={(330,50)}, rotate = 180] [color={rgb, 255:red, 0; green, 0; blue, 0 }  ][line width=0.75]    (10.93,-4.9) .. controls (6.95,-2.3) and (3.31,-0.67) .. (0,0) .. controls (3.31,0.67) and (6.95,2.3) .. (10.93,4.9)   ;
\draw   (450,150) -- (500,150) -- (500,200) -- (450,200) -- cycle ;
\draw   (450,100) -- (500,100) -- (500,150) -- (450,150) -- cycle ;
\draw   (450,50) -- (500,50) -- (500,100) -- (450,100) -- cycle ;
\draw   (150,350) -- (200,350) -- (200,400) -- (150,400) -- cycle ;
\draw   (150,300) -- (200,300) -- (200,350) -- (150,350) -- cycle ;
\draw   (150,250) -- (200,250) -- (200,300) -- (150,300) -- cycle ;
\draw   (150,200) -- (200,200) -- (200,250) -- (150,250) -- cycle ;
\draw   (150,150) -- (200,150) -- (200,200) -- (150,200) -- cycle ;
\draw   (150,100) -- (200,100) -- (200,150) -- (150,150) -- cycle ;
\draw   (150,50) -- (200,50) -- (200,100) -- (150,100) -- cycle ;
\draw   (200,200) -- (250,200) -- (250,250) -- (200,250) -- cycle ;
\draw   (250,200) -- (300,200) -- (300,250) -- (250,250) -- cycle ;
\draw   (300,200) -- (350,200) -- (350,250) -- (300,250) -- cycle ;
\draw   (350,200) -- (400,200) -- (400,250) -- (350,250) -- cycle ;
\draw   (400,200) -- (450,200) -- (450,250) -- (400,250) -- cycle ;
\draw   (50,100) -- (100,100) -- (100,150) -- (50,150) -- cycle ;
\draw   (100,100) -- (150,100) -- (150,150) -- (100,150) -- cycle ;
\draw   (500,100) -- (550,100) -- (550,150) -- (500,150) -- cycle ;
\draw   (550,100) -- (600,100) -- (600,150) -- (550,150) -- cycle ;
\draw   (50,300) -- (100,300) -- (100,350) -- (50,350) -- cycle ;
\draw   (100,300) -- (150,300) -- (150,350) -- (100,350) -- cycle ;
\draw   (500,300) -- (550,300) -- (550,350) -- (500,350) -- cycle ;
\draw   (550,300) -- (600,300) -- (600,350) -- (550,350) -- cycle ;
\draw  [dash pattern={on 0.84pt off 2.51pt}]  (60,90) -- (108,90) ;
\draw [shift={(110,90)}, rotate = 180] [color={rgb, 255:red, 0; green, 0; blue, 0 }  ][line width=0.75]    (10.93,-3.29) .. controls (6.95,-1.4) and (3.31,-0.3) .. (0,0) .. controls (3.31,0.3) and (6.95,1.4) .. (10.93,3.29)   ;
\draw  [dash pattern={on 0.84pt off 2.51pt}]  (110,90) -- (140,90) ;
\draw  [dash pattern={on 0.84pt off 2.51pt}]  (140,90) -- (140,40) ;
\draw  [dash pattern={on 0.84pt off 2.51pt}]  (140,40) -- (178,40) ;
\draw [shift={(180,40)}, rotate = 180] [color={rgb, 255:red, 0; green, 0; blue, 0 }  ][line width=0.75]    (10.93,-3.29) .. controls (6.95,-1.4) and (3.31,-0.3) .. (0,0) .. controls (3.31,0.3) and (6.95,1.4) .. (10.93,3.29)   ;
\draw  [dash pattern={on 0.84pt off 2.51pt}]  (210,40) -- (180,40) ;
\draw  [dash pattern={on 0.84pt off 2.51pt}]  (210,40) -- (210,128) ;
\draw [shift={(210,130)}, rotate = 270] [color={rgb, 255:red, 0; green, 0; blue, 0 }  ][line width=0.75]    (10.93,-3.29) .. controls (6.95,-1.4) and (3.31,-0.3) .. (0,0) .. controls (3.31,0.3) and (6.95,1.4) .. (10.93,3.29)   ;
\draw  [dash pattern={on 0.84pt off 2.51pt}]  (210,190) -- (210,130) ;
\draw  [dash pattern={on 0.84pt off 2.51pt}]  (210,190) -- (318,190) ;
\draw [shift={(320,190)}, rotate = 180] [color={rgb, 255:red, 0; green, 0; blue, 0 }  ][line width=0.75]    (10.93,-3.29) .. controls (6.95,-1.4) and (3.31,-0.3) .. (0,0) .. controls (3.31,0.3) and (6.95,1.4) .. (10.93,3.29)   ;
\draw  [dash pattern={on 0.84pt off 2.51pt}]  (320,190) -- (440,190) ;
\draw  [dash pattern={on 0.84pt off 2.51pt}]  (440,190) -- (440,102) ;
\draw [shift={(440,100)}, rotate = 90] [color={rgb, 255:red, 0; green, 0; blue, 0 }  ][line width=0.75]    (10.93,-3.29) .. controls (6.95,-1.4) and (3.31,-0.3) .. (0,0) .. controls (3.31,0.3) and (6.95,1.4) .. (10.93,3.29)   ;
\draw  [dash pattern={on 0.84pt off 2.51pt}]  (440,40) -- (440,100) ;

\draw  [dash pattern={on 0.84pt off 2.51pt}]  (440,40) -- (478,40) ;
\draw [shift={(480,40)}, rotate = 180] [color={rgb, 255:red, 0; green, 0; blue, 0 }  ][line width=0.75]    (10.93,-3.29) .. controls (6.95,-1.4) and (3.31,-0.3) .. (0,0) .. controls (3.31,0.3) and (6.95,1.4) .. (10.93,3.29)   ;
\draw  [dash pattern={on 0.84pt off 2.51pt}]  (510,40) -- (480,40) ;
\draw  [dash pattern={on 0.84pt off 2.51pt}]  (510,90) -- (558,90) ;
\draw [shift={(560,90)}, rotate = 180] [color={rgb, 255:red, 0; green, 0; blue, 0 }  ][line width=0.75]    (10.93,-3.29) .. controls (6.95,-1.4) and (3.31,-0.3) .. (0,0) .. controls (3.31,0.3) and (6.95,1.4) .. (10.93,3.29)   ;
\draw  [dash pattern={on 0.84pt off 2.51pt}]  (560,90) -- (590,90) ;
\draw  [dash pattern={on 0.84pt off 2.51pt}]  (510,90) -- (510,40) ;
\draw  [dash pattern={on 0.84pt off 2.51pt}]  (210,260) -- (318,260) ;
\draw [shift={(320,260)}, rotate = 180] [color={rgb, 255:red, 0; green, 0; blue, 0 }  ][line width=0.75]    (10.93,-3.29) .. controls (6.95,-1.4) and (3.31,-0.3) .. (0,0) .. controls (3.31,0.3) and (6.95,1.4) .. (10.93,3.29)   ;
\draw  [dash pattern={on 0.84pt off 2.51pt}]  (320,260) -- (440,260) ;
\draw  [dash pattern={on 0.84pt off 2.51pt}]  (210,410) -- (210,322) ;
\draw [shift={(210,320)}, rotate = 90] [color={rgb, 255:red, 0; green, 0; blue, 0 }  ][line width=0.75]    (10.93,-3.29) .. controls (6.95,-1.4) and (3.31,-0.3) .. (0,0) .. controls (3.31,0.3) and (6.95,1.4) .. (10.93,3.29)   ;
\draw  [dash pattern={on 0.84pt off 2.51pt}]  (210,260) -- (210,320) ;

\draw  [dash pattern={on 0.84pt off 2.51pt}]  (440,260) -- (440,348) ;
\draw [shift={(440,350)}, rotate = 270] [color={rgb, 255:red, 0; green, 0; blue, 0 }  ][line width=0.75]    (10.93,-3.29) .. controls (6.95,-1.4) and (3.31,-0.3) .. (0,0) .. controls (3.31,0.3) and (6.95,1.4) .. (10.93,3.29)   ;
\draw  [dash pattern={on 0.84pt off 2.51pt}]  (440,410) -- (440,350) ;
\draw  [dash pattern={on 0.84pt off 2.51pt}]  (140,410) -- (178,410) ;
\draw [shift={(180,410)}, rotate = 180] [color={rgb, 255:red, 0; green, 0; blue, 0 }  ][line width=0.75]    (10.93,-3.29) .. controls (6.95,-1.4) and (3.31,-0.3) .. (0,0) .. controls (3.31,0.3) and (6.95,1.4) .. (10.93,3.29)   ;
\draw  [dash pattern={on 0.84pt off 2.51pt}]  (210,410) -- (180,410) ;
\draw  [dash pattern={on 0.84pt off 2.51pt}]  (140,410) -- (140,360) ;
\draw  [dash pattern={on 0.84pt off 2.51pt}]  (60,360) -- (108,360) ;
\draw [shift={(110,360)}, rotate = 180] [color={rgb, 255:red, 0; green, 0; blue, 0 }  ][line width=0.75]    (10.93,-3.29) .. controls (6.95,-1.4) and (3.31,-0.3) .. (0,0) .. controls (3.31,0.3) and (6.95,1.4) .. (10.93,3.29)   ;
\draw  [dash pattern={on 0.84pt off 2.51pt}]  (110,360) -- (140,360) ;
\draw  [dash pattern={on 0.84pt off 2.51pt}]  (440,410) -- (478,410) ;
\draw [shift={(480,410)}, rotate = 180] [color={rgb, 255:red, 0; green, 0; blue, 0 }  ][line width=0.75]    (10.93,-3.29) .. controls (6.95,-1.4) and (3.31,-0.3) .. (0,0) .. controls (3.31,0.3) and (6.95,1.4) .. (10.93,3.29)   ;
\draw  [dash pattern={on 0.84pt off 2.51pt}]  (510,410) -- (480,410) ;
\draw  [dash pattern={on 0.84pt off 2.51pt}]  (510,410) -- (510,360) ;
\draw  [dash pattern={on 0.84pt off 2.51pt}]  (510,360) -- (558,360) ;
\draw [shift={(560,360)}, rotate = 180] [color={rgb, 255:red, 0; green, 0; blue, 0 }  ][line width=0.75]    (10.93,-3.29) .. controls (6.95,-1.4) and (3.31,-0.3) .. (0,0) .. controls (3.31,0.3) and (6.95,1.4) .. (10.93,3.29)   ;
\draw  [dash pattern={on 0.84pt off 2.51pt}]  (560,360) -- (590,360) ;

\draw (467,222.4) node [anchor=north west][inner sep=0.75pt]    {$b$};
\draw (467,272.4) node [anchor=north west][inner sep=0.75pt]    {$b$};
\draw (467,322.4) node [anchor=north west][inner sep=0.75pt]    {$b$};
\draw (467,372.4) node [anchor=north west][inner sep=0.75pt]    {$b$};
\draw (467,72.4) node [anchor=north west][inner sep=0.75pt]    {$b$};
\draw (467,122.4) node [anchor=north west][inner sep=0.75pt]    {$b$};
\draw (467,172.4) node [anchor=north west][inner sep=0.75pt]    {$b$};
\draw (167,222.4) node [anchor=north west][inner sep=0.75pt]    {$b$};
\draw (167,272.4) node [anchor=north west][inner sep=0.75pt]    {$b$};
\draw (167,322.4) node [anchor=north west][inner sep=0.75pt]    {$b$};
\draw (167,372.4) node [anchor=north west][inner sep=0.75pt]    {$b$};
\draw (167,72.4) node [anchor=north west][inner sep=0.75pt]    {$b$};
\draw (167,122.4) node [anchor=north west][inner sep=0.75pt]    {$b$};
\draw (167,172.4) node [anchor=north west][inner sep=0.75pt]    {$b$};
\draw (207,212.4) node [anchor=north west][inner sep=0.75pt]    {$c$};
\draw (257,212.4) node [anchor=north west][inner sep=0.75pt]    {$c$};
\draw (307,212.4) node [anchor=north west][inner sep=0.75pt]    {$c$};
\draw (357,212.4) node [anchor=north west][inner sep=0.75pt]    {$c$};
\draw (407,212.4) node [anchor=north west][inner sep=0.75pt]    {$c$};
\draw (57,112.4) node [anchor=north west][inner sep=0.75pt]    {$c$};
\draw (107,112.4) node [anchor=north west][inner sep=0.75pt]    {$c$};
\draw (507,112.4) node [anchor=north west][inner sep=0.75pt]    {$c$};
\draw (557,112.4) node [anchor=north west][inner sep=0.75pt]    {$c$};
\draw (57,312.4) node [anchor=north west][inner sep=0.75pt]    {$c$};
\draw (107,312.4) node [anchor=north west][inner sep=0.75pt]    {$c$};
\draw (507,312.4) node [anchor=north west][inner sep=0.75pt]    {$c$};
\draw (557,312.4) node [anchor=north west][inner sep=0.75pt]    {$c$};
\draw (310,10) node [anchor=north west][inner sep=0.75pt]    {$u$};
\draw (310,410) node [anchor=north west][inner sep=0.75pt]    {$v$};

\end{tikzpicture}
    \caption{$c$-segments running between consecutive radial $b$-corridors}
    \label{fig:connected radial b-corridors}
\end{figure}
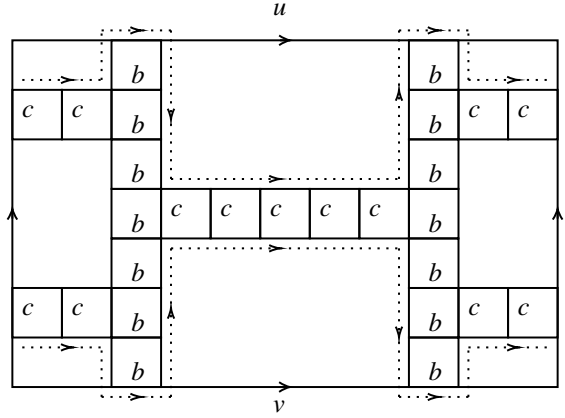

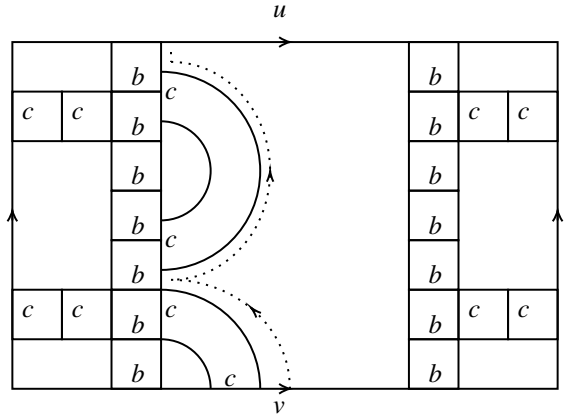
\begin{figure}
    \centering

\tikzset{every picture/.style={line width=0.75pt}} 

\begin{tikzpicture}[x=0.5pt,y=0.5pt,yscale=-.75,xscale=.75]

\draw   (50,50) -- (600,50) -- (600,400) -- (50,400) -- cycle ;
\draw    (50,230) -- (50,222) ;
\draw [shift={(50,220)}, rotate = 90] [color={rgb, 255:red, 0; green, 0; blue, 0 }  ][line width=0.75]    (10.93,-4.9) .. controls (6.95,-2.3) and (3.31,-0.67) .. (0,0) .. controls (3.31,0.67) and (6.95,2.3) .. (10.93,4.9)   ;
\draw    (600,230) -- (600,222) ;
\draw [shift={(600,220)}, rotate = 90] [color={rgb, 255:red, 0; green, 0; blue, 0 }  ][line width=0.75]    (10.93,-4.9) .. controls (6.95,-2.3) and (3.31,-0.67) .. (0,0) .. controls (3.31,0.67) and (6.95,2.3) .. (10.93,4.9)   ;
\draw   (450,350) -- (500,350) -- (500,400) -- (450,400) -- cycle ;
\draw   (450,300) -- (500,300) -- (500,350) -- (450,350) -- cycle ;
\draw   (450,250) -- (500,250) -- (500,300) -- (450,300) -- cycle ;
\draw   (450,200) -- (500,200) -- (500,250) -- (450,250) -- cycle ;
\draw    (320,400) -- (328,400) ;
\draw [shift={(330,400)}, rotate = 180] [color={rgb, 255:red, 0; green, 0; blue, 0 }  ][line width=0.75]    (10.93,-4.9) .. controls (6.95,-2.3) and (3.31,-0.67) .. (0,0) .. controls (3.31,0.67) and (6.95,2.3) .. (10.93,4.9)   ;
\draw    (320,50) -- (328,50) ;
\draw [shift={(330,50)}, rotate = 180] [color={rgb, 255:red, 0; green, 0; blue, 0 }  ][line width=0.75]    (10.93,-4.9) .. controls (6.95,-2.3) and (3.31,-0.67) .. (0,0) .. controls (3.31,0.67) and (6.95,2.3) .. (10.93,4.9)   ;
\draw   (450,150) -- (500,150) -- (500,200) -- (450,200) -- cycle ;
\draw   (450,100) -- (500,100) -- (500,150) -- (450,150) -- cycle ;
\draw   (450,50) -- (500,50) -- (500,100) -- (450,100) -- cycle ;
\draw   (150,350) -- (200,350) -- (200,400) -- (150,400) -- cycle ;
\draw   (150,300) -- (200,300) -- (200,350) -- (150,350) -- cycle ;
\draw   (150,250) -- (200,250) -- (200,300) -- (150,300) -- cycle ;
\draw   (150,200) -- (200,200) -- (200,250) -- (150,250) -- cycle ;
\draw   (150,150) -- (200,150) -- (200,200) -- (150,200) -- cycle ;
\draw   (150,100) -- (200,100) -- (200,150) -- (150,150) -- cycle ;
\draw   (150,50) -- (200,50) -- (200,100) -- (150,100) -- cycle ;
\draw   (50,100) -- (100,100) -- (100,150) -- (50,150) -- cycle ;
\draw   (100,100) -- (150,100) -- (150,150) -- (100,150) -- cycle ;
\draw   (50,300) -- (100,300) -- (100,350) -- (50,350) -- cycle ;
\draw   (100,300) -- (150,300) -- (150,350) -- (100,350) -- cycle ;
\draw   (500,100) -- (550,100) -- (550,150) -- (500,150) -- cycle ;
\draw   (550,100) -- (600,100) -- (600,150) -- (550,150) -- cycle ;
\draw   (500,300) -- (550,300) -- (550,350) -- (500,350) -- cycle ;
\draw   (550,300) -- (600,300) -- (600,350) -- (550,350) -- cycle ;
\draw  [draw opacity=0] (200,350) .. controls (200,350) and (200,350) .. (200,350) .. controls (227.61,350) and (250,372.39) .. (250,400) -- (200,400) -- cycle ; \draw   (200,350) .. controls (200,350) and (200,350) .. (200,350) .. controls (227.61,350) and (250,372.39) .. (250,400) ;  
\draw  [draw opacity=0] (200,300) .. controls (200,300) and (200,300) .. (200,300) .. controls (255.23,300) and (300,344.77) .. (300,400) -- (200,400) -- cycle ; \draw   (200,300) .. controls (200,300) and (200,300) .. (200,300) .. controls (255.23,300) and (300,344.77) .. (300,400) ;  
\draw  [draw opacity=0] (200,130) .. controls (200,130) and (200,130) .. (200,130) .. controls (227.61,130) and (250,152.39) .. (250,180) .. controls (250,207.61) and (227.61,230) .. (200,230) -- (200,180) -- cycle ; \draw   (200,130) .. controls (200,130) and (200,130) .. (200,130) .. controls (227.61,130) and (250,152.39) .. (250,180) .. controls (250,207.61) and (227.61,230) .. (200,230) ;  
\draw  [draw opacity=0] (200,80) .. controls (255.23,80) and (300,124.77) .. (300,180) .. controls (300,235.23) and (255.23,280) .. (200,280) -- (200,180) -- cycle ; \draw   (200,80) .. controls (255.23,80) and (300,124.77) .. (300,180) .. controls (300,235.23) and (255.23,280) .. (200,280) ;  
\draw  [draw opacity=0][dash pattern={on 0.84pt off 2.51pt}] (210,290) .. controls (276.27,290) and (330,339.25) .. (330,400) -- (210,400) -- cycle ; \draw  [dash pattern={on 0.84pt off 2.51pt}] (210,290) .. controls (276.27,290) and (330,339.25) .. (330,400) ;  
\draw    (295,322.33) -- (292.08,319.41) ;
\draw [shift={(290.67,318)}, rotate = 45] [color={rgb, 255:red, 0; green, 0; blue, 0 }  ][line width=0.75]    (10.93,-3.29) .. controls (6.95,-1.4) and (3.31,-0.3) .. (0,0) .. controls (3.31,0.3) and (6.95,1.4) .. (10.93,3.29)   ;
\draw  [draw opacity=0][dash pattern={on 0.84pt off 2.51pt}] (210,70) .. controls (210,70) and (210,70) .. (210,70) .. controls (265.23,70) and (310,119.25) .. (310,180) .. controls (310,240.75) and (265.23,290) .. (210,290) -- (210,180) -- cycle ; \draw  [dash pattern={on 0.84pt off 2.51pt}] (210,70) .. controls (210,70) and (210,70) .. (210,70) .. controls (265.23,70) and (310,119.25) .. (310,180) .. controls (310,240.75) and (265.23,290) .. (210,290) ;  
\draw    (310,185) -- (310,182) ;
\draw [shift={(310,180)}, rotate = 90] [color={rgb, 255:red, 0; green, 0; blue, 0 }  ][line width=0.75]    (10.93,-3.29) .. controls (6.95,-1.4) and (3.31,-0.3) .. (0,0) .. controls (3.31,0.3) and (6.95,1.4) .. (10.93,3.29)   ;
\draw  [dash pattern={on 0.84pt off 2.51pt}]  (210,60) -- (210,70) ;

\draw (467,222.4) node [anchor=north west][inner sep=0.75pt]    {$b$};
\draw (467,272.4) node [anchor=north west][inner sep=0.75pt]    {$b$};
\draw (467,322.4) node [anchor=north west][inner sep=0.75pt]    {$b$};
\draw (467,372.4) node [anchor=north west][inner sep=0.75pt]    {$b$};
\draw (467,72.4) node [anchor=north west][inner sep=0.75pt]    {$b$};
\draw (467,122.4) node [anchor=north west][inner sep=0.75pt]    {$b$};
\draw (467,172.4) node [anchor=north west][inner sep=0.75pt]    {$b$};
\draw (167,222.4) node [anchor=north west][inner sep=0.75pt]    {$b$};
\draw (167,272.4) node [anchor=north west][inner sep=0.75pt]    {$b$};
\draw (167,322.4) node [anchor=north west][inner sep=0.75pt]    {$b$};
\draw (167,372.4) node [anchor=north west][inner sep=0.75pt]    {$b$};
\draw (167,72.4) node [anchor=north west][inner sep=0.75pt]    {$b$};
\draw (167,122.4) node [anchor=north west][inner sep=0.75pt]    {$b$};
\draw (167,172.4) node [anchor=north west][inner sep=0.75pt]    {$b$};
\draw (57,112.4) node [anchor=north west][inner sep=0.75pt]    {$c$};
\draw (107,112.4) node [anchor=north west][inner sep=0.75pt]    {$c$};
\draw (57,312.4) node [anchor=north west][inner sep=0.75pt]    {$c$};
\draw (107,312.4) node [anchor=north west][inner sep=0.75pt]    {$c$};
\draw (507,112.4) node [anchor=north west][inner sep=0.75pt]    {$c$};
\draw (557,112.4) node [anchor=north west][inner sep=0.75pt]    {$c$};
\draw (507,312.4) node [anchor=north west][inner sep=0.75pt]    {$c$};
\draw (557,312.4) node [anchor=north west][inner sep=0.75pt]    {$c$};
\draw (201,92.4) node [anchor=north west][inner sep=0.75pt]    {$c$};
\draw (201,312.4) node [anchor=north west][inner sep=0.75pt]    {$c$};
\draw (201,242.4) node [anchor=north west][inner sep=0.75pt]    {$c$};
\draw (261,382.4) node [anchor=north west][inner sep=0.75pt]    {$c$};
\draw (310,10) node [anchor=north west][inner sep=0.75pt]    {$u$};
\draw (310,410) node [anchor=north west][inner sep=0.75pt]    {$v$};

\end{tikzpicture}

    \caption{radial $b$-corridors not connected by a $c$-segment}
    \label{fig:disconnected radial b-corridors}
\end{figure}

The final bound we need for Theorem~\ref{MainTheorem} is:

\begin{prop} \label{lower bound on anng4 prop} $2^{n^2}\preceq \Ann_{G_4}(n)$.
\end{prop}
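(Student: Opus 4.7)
The plan is to exhibit, for each $n$, a pair $(u_n,v_n)$ of conjugate words in $G_4$ with $|u_n|+|v_n| = O(n)$ such that every annular diagram for them requires at least $2^{n^2}$ cells. The pair must simultaneously engage the quadratic Heisenberg distortion $c^{n^2}=[a^n,b^n]$ (which forces conjugators to have $a$-exponent $\Theta(n^2)$) and the exponential $\BS(1,2)$ Dehn function (which turns an $a^{n^2}$-conjugation acting on a single $s$-letter into $2^{n^2}$ cells). A natural candidate is $u_n = bs$ together with $v_n = b[a^n,b^n]s$, of total length $4n+3$; if this pair fails the amalgam-conjugacy criterion for $A*_C B$, then the variant $u_n = s^{-1}bs$, $v_n = s^{-1}b[a^n,b^n]s$ (with obvious conjugator $s^{-1}a^{n^2}s$) should work.

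The proof would proceed in two stages. First, I would pin down the conjugator. Via the retraction $G_4 \twoheadrightarrow \mathcal{H}_3(\ZZ)$ killing $\{s,d\}$ (Lemma~\ref{structureG4}\eqref{Heisenberg retract}), any conjugator $g$ of $(u_n,v_n)$ projects to a conjugator of $(b,b[a^n,b^n])$ in $\mathcal{H}_3$, and the Heisenberg analysis of Section~\ref{heisensec} forces $\exp_a(g) = n^2$. Additionally, applying the amalgam-conjugacy criterion for length-$2$ cyclically reduced elements of $A*_C B$ (with $C = \langle a,d\rangle$) to the pair, one rules out conjugators whose $d$-exponent compensates the $a$-exponent in a way that would centralize $s$. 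Concretely, the candidate element $a^{n^2}d^{-n^2}$ centralizes $s$ in $B$, but the amalgam conditions $c_1\in C$ and $c_0 = s^{-1}c_1 s \in C$ can be shown to fail for our chosen $u_n,v_n$, forcing any effective conjugator to act by doubling $s$ by $2^{n^2}$.

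Second, I would lower-bound the area of any reduced annular diagram $\Omega$ for $(u_n,v_n)$. By Lemma~\ref{no b-rings} we may assume no contractible $b$- or $d$-rings, and since $u_n,v_n$ contain no $d$-letters, every $d$-corridor in $\Omega$ forms a (necessarily non-contractible) ring, which can only reduce the effective $s$-doubling by an amount bounded by the number of such rings in a way controlled by Lemma~\ref{length detail}. The boundary $s$-edges (one on each component) are joined by an $s$-subtree whose cells come from the relations $s^a s^{-2}$ and $s^d s^{-2}$; along this subtree the decorations are $a^{\pm 1}$'s and (by the ring analysis) $d^{\pm 1}$'s that cannot annihilate the Stage~1 net $a$-exponent of $n^2$ on any path between the two boundary $s$-edges. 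Hence the binary-tree structure has depth $\sim n^2$, contributing $\geq 2^{n^2} - 1$ cells, which matches the $\BS(1,2)$ lower bound for $\Area(a^{n^2}s a^{-n^2} s^{-2^{n^2}})$ (an elementary consequence of Britton's lemma and the tree structure of $\BS(1,2)$-van~Kampen diagrams).

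The main obstacle will be the second half of Stage~1: ruling out every possible $d$-compensation in the conjugator. This requires a careful Magnus--Karrass--Solitar analysis of $A *_C B$-conjugacy applied to both options $xy\sim x'y'$ and the cyclically-rotated $xy\sim y'x'$, showing no $(c_0,c_1)\in C\times C$ solves both the $A$-equation and the $B$-equation. A secondary obstacle is controlling the interaction of non-contractible $d$-rings (which cannot be excised by Lemma~\ref{no b-rings}) with the $s$-subtree; here the correct estimate comes from noting that any non-contractible $d$-ring contributes $a$-type decorations via adjacent cells whose total effect is again governed by the Heisenberg constraint on $\exp_a(g)$.
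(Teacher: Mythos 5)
Your Stage~1 contains the fatal flaw, and the rest of the argument depends on it. The amalgam-conjugacy conditions you hope to show fail actually hold (for the correctly signed pair): the element $a^{-n^2}d^{n^2}$ genuinely centralizes $s$ in $B$, since $sa^{-k}=a^{-k}s^{2^k}$ and $s^{2^k}d^{k}=d^{k}s$, and using $[b,d]=1$ and $a^{-k}b^{-1}=b^{-1}c^{k}a^{-k}$ it is an honest conjugator between $b^{-1}c^{n^2}s$ and $b^{-1}s$. Moreover the two retractions of $G_4$ (onto $\mathcal{H}_3(\ZZ)$ killing $d,s$, and onto $\BS(1,2)$ killing $b,c$ with $d\mapsto a$) show that \emph{every} conjugator must have $d$-exponent exactly cancelling its $a$-exponent; the $d$-compensation is forced, not excludable. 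For length-two cyclically reduced elements of $A\ast_C B$ the Magnus--Karrass--Solitar criterion is \emph{equivalent} to conjugacy, so there is no regime in which the words are conjugate yet every conjugator ``acts by doubling $s$ by $2^{n^2}$'': either the criterion holds and an $s$-centralizing conjugator exists, or it fails and the pair is useless for a lower bound. (Your primary pair $bs$, $b[a^n,b^n]s$ appears to sit on the wrong side of the sign asymmetry and is not conjugate at all, since $a^{k}d^{-k}$ does \emph{not} centralize $s$ for $k>0$; and your fallback pair $s^{-1}bs\sim s^{-1}b[a^n,b^n]s$ is conjugate by $s^{-1}a^{n^2}s$ entirely inside a conjugate of $A$, so its annular area is bounded by $\Area(a^{n^2}ba^{-n^2}[a^n,b^n]^{-1}b^{-1})$, which is polynomial in $n$ --- nowhere near $2^{n^2}$.)

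Because Stage~1 fails, Stage~2's premise --- an uncancelled net $a$-exponent of $n^2$ along every path between the boundary $s$-edges, feeding the $\BS(1,2)$ bound for $\Area(a^{n^2}sa^{-n^2}s^{-2^{n^2}})$ --- is unavailable; no such word ever appears on a boundary. The mechanism that actually works is different and your proposal contains no substitute for it. One cuts the annular diagram along the unique radial $b$-corridor to obtain a disc diagram for $uwv^{-1}w^{-1}$ with $w=w(a,c,d)$, shows the $d$-corridors are nested non-contractible rings each crossing the $b$-corridor exactly once, and then proves the decisive fact: each piece $w_i$ of $w$ lying between consecutive $d$-rings satisfies $\exp_a(w_i)=0$, because the subdiagram between those rings has boundary word $s^{\xi_{i-1}}w_i(a,c)s^{-\nu_i}w_i(c^{-1}a,c)^{-1}$ whose $c$-exponent equals $\exp_a(w_i)$ and must vanish under the retraction killing $s$. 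Thus the entire $a$-exponent $-n^2$ of $w$ is concentrated before the first $d$-ring (where the boundary $c^{n^2}$ can absorb it), and the $s$-powers carried by the rings telescope from $1$ up to $2^{n^2}$; the ring carrying $s^{2^{n^2}}$ alone has at least $2^{n^2-1}$ cells. The lower bound comes from these forced \emph{intermediate} $s$-powers in the presence of a conjugator whose net action on $s$ is trivial --- the opposite of what your Stage~1 tries to arrange.
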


\begin{proof}
    Suppose $n \in \N$. Let $u=b^{-1}c^{n^2}s$,  $v=b^{-1}s$, and $w = a^{-n^2}d^{n^2}$. 
  We claim that in $G_4$, 
     $$uw \ = \ b^{-1} c^{n^2} s \, a^{-n^2}   d^{n^2}  \ = \ b^{-1} c^{n^2} a^{-n^2}   d^{n^2} s  \ = \ a^{-n^2}  b^{-1} d^{n^2} s \ = \ a^{-n^2}d^{n^2} \, b^{-1}s \ = \ wv,$$ and so $u$ and $v$ are conjugate in $G_4$. 
        The second of these  equalities holds because $s^a = a s a^{-1} =s^2$ and $s^d = d s d^{-1} =s^2$ in $G_4$ imply that for all $k \in \N$ we have $sa^{-k} =  a^{-k} s^{2^k}$ and $d^{k} s =   s^{2^k} d^{k}$, and therefore $s$ commutes with  $a^{-k} d^k$.   The third uses  $[a,b] = a b a^{-1} b^{-1} = c$ and $[a,c]=[b,c]=1$, from which it follows that $a^{-1} b^{-1}  = b^{-1} c a^{-1}$ and then that $a^{-k} b^{-1}  = b^{-1} c^k a^{-k}$ for all $k \in \Z$.  The fourth uses $[b,d]=1$.    
 
 Also, $|u|_{G_4}\preceq n$  and $$\Area(u  \, (b^{-1}[a^n,b^n]s)^{-1}) \ \leq \  \Area(b^{-1} c^{n^2}  \, [a^n,b^n]^{-1} b) \ \leq  \ n^3,$$ because $c^{n^2}=[a^n,b^n]$ in $G_4$ as a consequence of apply defining relations $[a,b] = c$ and $[a,c]=1$ and $[b,c]=1$ at most $n^3$ times. So, by Lemma \ref{Gluing van Kampen to annular diagram}, it   suffices to show that $2^{n^2-1} \leq \Ann(u,v)$. 

Well, suppose $\Omega$ is any annular diagram  for $u\sim v$.  A $b$-corridor $\beta$ connects the edge labelled by the $b^{-1}$ in the $u$-boundary component to the   $b^{-1}$ in the $v$-boundary component. Let $w$ be the word on $a$, $c$, and $d$ read along one side of that $b$-corridor, so that cutting the diagram along that side gives a van~Kampen diagram $\Delta$ for $uwv^{-1}w^{-1}$---see Figure~\ref{vK diagram concentric d-rings}.

\begin{figure}[ht]

\tikzset{every picture/.style={line width=0.75pt}} 

\begin{tikzpicture}[x=0.5pt,y=0.5pt,yscale=-1,xscale=1]

\draw   (50,50) -- (600,50) -- (600,400) -- (50,400) -- cycle ;
\draw    (50,230) -- (50,222) ;
\draw [shift={(50,220)}, rotate = 90] [color={rgb, 255:red, 0; green, 0; blue, 0 }  ][line width=0.75]    (10.93,-3.29) .. controls (6.95,-1.4) and (3.31,-0.3) .. (0,0) .. controls (3.31,0.3) and (6.95,1.4) .. (10.93,3.29)   ;
\draw    (600,230) -- (600,222) ;
\draw [shift={(600,220)}, rotate = 90] [color={rgb, 255:red, 0; green, 0; blue, 0 }  ][line width=0.75]    (10.93,-3.29) .. controls (6.95,-1.4) and (3.31,-0.3) .. (0,0) .. controls (3.31,0.3) and (6.95,1.4) .. (10.93,3.29)   ;
\draw   (100,350) -- (150,350) -- (150,400) -- (100,400) -- cycle ;
\draw   (100,300) -- (150,300) -- (150,350) -- (100,350) -- cycle ;
\draw   (100,250) -- (150,250) -- (150,300) -- (100,300) -- cycle ;
\draw   (100,200) -- (150,200) -- (150,250) -- (100,250) -- cycle ;
\draw   (100,150) -- (150,150) -- (150,200) -- (100,200) -- cycle ;
\draw   (100,100) -- (150,100) -- (150,150) -- (100,150) -- cycle ;
\draw   (100,50) -- (150,50) -- (150,100) -- (100,100) -- cycle ;
\draw   (250,350) -- (300,350) -- (300,400) -- (250,400) -- cycle ;
\draw   (250,300) -- (300,300) -- (300,350) -- (250,350) -- cycle ;
\draw   (250,250) -- (300,250) -- (300,300) -- (250,300) -- cycle ;
\draw   (250,200) -- (300,200) -- (300,250) -- (250,250) -- cycle ;
\draw   (250,150) -- (300,150) -- (300,200) -- (250,200) -- cycle ;
\draw   (250,100) -- (300,100) -- (300,150) -- (250,150) -- cycle ;
\draw   (250,50) -- (300,50) -- (300,100) -- (250,100) -- cycle ;

\draw    (570,50) -- (578,50) ;
\draw [shift={(580,50)}, rotate = 180] [color={rgb, 255:red, 0; green, 0; blue, 0 }  ][line width=0.75]    (10.93,-3.29) .. controls (6.95,-1.4) and (3.31,-0.3) .. (0,0) .. controls (3.31,0.3) and (6.95,1.4) .. (10.93,3.29)   ;
\draw    (50,350) -- (600,350) ;
\draw    (50,370) -- (50,378) ;
\draw [shift={(50,380)}, rotate = 270] [color={rgb, 255:red, 0; green, 0; blue, 0 }  ][line width=0.75]    (10.93,-3.29) .. controls (6.95,-1.4) and (3.31,-0.3) .. (0,0) .. controls (3.31,0.3) and (6.95,1.4) .. (10.93,3.29)   ;
\draw    (100,370) -- (100,378) ;
\draw [shift={(100,380)}, rotate = 270] [color={rgb, 255:red, 0; green, 0; blue, 0 }  ][line width=0.75]    (10.93,-3.29) .. controls (6.95,-1.4) and (3.31,-0.3) .. (0,0) .. controls (3.31,0.3) and (6.95,1.4) .. (10.93,3.29)   ;
\draw    (250,370) -- (250,378) ;
\draw [shift={(250,380)}, rotate = 270] [color={rgb, 255:red, 0; green, 0; blue, 0 }  ][line width=0.75]    (10.93,-3.29) .. controls (6.95,-1.4) and (3.31,-0.3) .. (0,0) .. controls (3.31,0.3) and (6.95,1.4) .. (10.93,3.29)   ;
\draw    (300,370) -- (300,378) ;
\draw [shift={(300,380)}, rotate = 270] [color={rgb, 255:red, 0; green, 0; blue, 0 }  ][line width=0.75]    (10.93,-3.29) .. controls (6.95,-1.4) and (3.31,-0.3) .. (0,0) .. controls (3.31,0.3) and (6.95,1.4) .. (10.93,3.29)   ;
\draw  [dash pattern={on 0.84pt off 2.51pt}]  (20,50) -- (20,400) ;
\draw  [dash pattern={on 0.84pt off 2.51pt}]  (630,50) -- (630,400) ;
\draw    (150,370) -- (150,378) ;
\draw [shift={(150,380)}, rotate = 270] [color={rgb, 255:red, 0; green, 0; blue, 0 }  ][line width=0.75]    (10.93,-3.29) .. controls (6.95,-1.4) and (3.31,-0.3) .. (0,0) .. controls (3.31,0.3) and (6.95,1.4) .. (10.93,3.29)   ;
\draw   (500,350) -- (550,350) -- (550,400) -- (500,400) -- cycle ;
\draw   (500,300) -- (550,300) -- (550,350) -- (500,350) -- cycle ;
\draw   (500,250) -- (550,250) -- (550,300) -- (500,300) -- cycle ;
\draw   (500,200) -- (550,200) -- (550,250) -- (500,250) -- cycle ;
\draw   (500,150) -- (550,150) -- (550,200) -- (500,200) -- cycle ;
\draw   (500,100) -- (550,100) -- (550,150) -- (500,150) -- cycle ;
\draw   (500,50) -- (550,50) -- (550,100) -- (500,100) -- cycle ;
\draw    (500,370) -- (500,378) ;
\draw [shift={(500,380)}, rotate = 270] [color={rgb, 255:red, 0; green, 0; blue, 0 }  ][line width=0.75]    (10.93,-3.29) .. controls (6.95,-1.4) and (3.31,-0.3) .. (0,0) .. controls (3.31,0.3) and (6.95,1.4) .. (10.93,3.29)   ;
\draw    (550,370) -- (550,378) ;
\draw [shift={(550,380)}, rotate = 270] [color={rgb, 255:red, 0; green, 0; blue, 0 }  ][line width=0.75]    (10.93,-3.29) .. controls (6.95,-1.4) and (3.31,-0.3) .. (0,0) .. controls (3.31,0.3) and (6.95,1.4) .. (10.93,3.29)   ;
\draw    (600,370) -- (600,378) ;
\draw [shift={(600,380)}, rotate = 270] [color={rgb, 255:red, 0; green, 0; blue, 0 }  ][line width=0.75]    (10.93,-3.29) .. controls (6.95,-1.4) and (3.31,-0.3) .. (0,0) .. controls (3.31,0.3) and (6.95,1.4) .. (10.93,3.29)   ;
\draw    (20,230) -- (20,222) ;
\draw [shift={(20,220)}, rotate = 90] [color={rgb, 255:red, 0; green, 0; blue, 0 }  ][line width=0.75]    (10.93,-3.29) .. controls (6.95,-1.4) and (3.31,-0.3) .. (0,0) .. controls (3.31,0.3) and (6.95,1.4) .. (10.93,3.29)   ;
\draw    (630,235) -- (630,227) ;
\draw [shift={(630,225)}, rotate = 90] [color={rgb, 255:red, 0; green, 0; blue, 0 }  ][line width=0.75]    (10.93,-3.29) .. controls (6.95,-1.4) and (3.31,-0.3) .. (0,0) .. controls (3.31,0.3) and (6.95,1.4) .. (10.93,3.29)   ;
\draw    (100,230) -- (100,222) ;
\draw [shift={(100,220)}, rotate = 90] [color={rgb, 255:red, 0; green, 0; blue, 0 }  ][line width=0.75]    (10.93,-3.29) .. controls (6.95,-1.4) and (3.31,-0.3) .. (0,0) .. controls (3.31,0.3) and (6.95,1.4) .. (10.93,3.29)   ;
\draw    (150,230) -- (150,222) ;
\draw [shift={(150,220)}, rotate = 90] [color={rgb, 255:red, 0; green, 0; blue, 0 }  ][line width=0.75]    (10.93,-3.29) .. controls (6.95,-1.4) and (3.31,-0.3) .. (0,0) .. controls (3.31,0.3) and (6.95,1.4) .. (10.93,3.29)   ;
\draw    (250,230) -- (250,222) ;
\draw [shift={(250,220)}, rotate = 90] [color={rgb, 255:red, 0; green, 0; blue, 0 }  ][line width=0.75]    (10.93,-3.29) .. controls (6.95,-1.4) and (3.31,-0.3) .. (0,0) .. controls (3.31,0.3) and (6.95,1.4) .. (10.93,3.29)   ;
\draw    (300,230) -- (300,222) ;
\draw [shift={(300,220)}, rotate = 90] [color={rgb, 255:red, 0; green, 0; blue, 0 }  ][line width=0.75]    (10.93,-3.29) .. controls (6.95,-1.4) and (3.31,-0.3) .. (0,0) .. controls (3.31,0.3) and (6.95,1.4) .. (10.93,3.29)   ;
\draw    (500,230) -- (500,222) ;
\draw [shift={(500,220)}, rotate = 90] [color={rgb, 255:red, 0; green, 0; blue, 0 }  ][line width=0.75]    (10.93,-3.29) .. controls (6.95,-1.4) and (3.31,-0.3) .. (0,0) .. controls (3.31,0.3) and (6.95,1.4) .. (10.93,3.29)   ;
\draw    (550,230) -- (550,222) ;
\draw [shift={(550,220)}, rotate = 90] [color={rgb, 255:red, 0; green, 0; blue, 0 }  ][line width=0.75]    (10.93,-3.29) .. controls (6.95,-1.4) and (3.31,-0.3) .. (0,0) .. controls (3.31,0.3) and (6.95,1.4) .. (10.93,3.29)   ;
\draw    (70,50) -- (78,50) ;
\draw [shift={(80,50)}, rotate = 180] [color={rgb, 255:red, 0; green, 0; blue, 0 }  ][line width=0.75]    (10.93,-3.29) .. controls (6.95,-1.4) and (3.31,-0.3) .. (0,0) .. controls (3.31,0.3) and (6.95,1.4) .. (10.93,3.29)   ;
\draw    (200,50) -- (208,50) ;
\draw [shift={(210,50)}, rotate = 180] [color={rgb, 255:red, 0; green, 0; blue, 0 }  ][line width=0.75]    (10.93,-3.29) .. controls (6.95,-1.4) and (3.31,-0.3) .. (0,0) .. controls (3.31,0.3) and (6.95,1.4) .. (10.93,3.29)   ;
\draw  [dash pattern={on 0.84pt off 2.51pt}]  (50,10) -- (600,10) ;
\draw    (320,10) -- (323,10) ;
\draw [shift={(325,10)}, rotate = 180] [color={rgb, 255:red, 0; green, 0; blue, 0 }  ][line width=0.75]    (10.93,-3.29) .. controls (6.95,-1.4) and (3.31,-0.3) .. (0,0) .. controls (3.31,0.3) and (6.95,1.4) .. (10.93,3.29)   ;
\draw  [dash pattern={on 0.84pt off 2.51pt}]  (50,440) -- (600,440) ;
\draw    (320,440) -- (323,440) ;
\draw [shift={(325,440)}, rotate = 180] [color={rgb, 255:red, 0; green, 0; blue, 0 }  ][line width=0.75]    (10.93,-3.29) .. controls (6.95,-1.4) and (3.31,-0.3) .. (0,0) .. controls (3.31,0.3) and (6.95,1.4) .. (10.93,3.29)   ;
\draw    (570,400) -- (578,400) ;
\draw [shift={(580,400)}, rotate = 180] [color={rgb, 255:red, 0; green, 0; blue, 0 }  ][line width=0.75]    (10.93,-3.29) .. controls (6.95,-1.4) and (3.31,-0.3) .. (0,0) .. controls (3.31,0.3) and (6.95,1.4) .. (10.93,3.29)   ;
\draw    (70,400) -- (78,400) ;
\draw [shift={(80,400)}, rotate = 180] [color={rgb, 255:red, 0; green, 0; blue, 0 }  ][line width=0.75]    (10.93,-3.29) .. controls (6.95,-1.4) and (3.31,-0.3) .. (0,0) .. controls (3.31,0.3) and (6.95,1.4) .. (10.93,3.29)   ;
\draw    (200,400) -- (208,400) ;
\draw [shift={(210,400)}, rotate = 180] [color={rgb, 255:red, 0; green, 0; blue, 0 }  ][line width=0.75]    (10.93,-3.29) .. controls (6.95,-1.4) and (3.31,-0.3) .. (0,0) .. controls (3.31,0.3) and (6.95,1.4) .. (10.93,3.29)   ;
\draw    (120,400) -- (128,400) ;
\draw [shift={(130,400)}, rotate = 180] [color={rgb, 255:red, 0; green, 0; blue, 0 }  ][line width=0.75]    (10.93,-3.29) .. controls (6.95,-1.4) and (3.31,-0.3) .. (0,0) .. controls (3.31,0.3) and (6.95,1.4) .. (10.93,3.29)   ;
\draw    (120,50) -- (128,50) ;
\draw [shift={(130,50)}, rotate = 180] [color={rgb, 255:red, 0; green, 0; blue, 0 }  ][line width=0.75]    (10.93,-3.29) .. controls (6.95,-1.4) and (3.31,-0.3) .. (0,0) .. controls (3.31,0.3) and (6.95,1.4) .. (10.93,3.29)   ;
\draw    (270,50) -- (278,50) ;
\draw [shift={(280,50)}, rotate = 180] [color={rgb, 255:red, 0; green, 0; blue, 0 }  ][line width=0.75]    (10.93,-3.29) .. controls (6.95,-1.4) and (3.31,-0.3) .. (0,0) .. controls (3.31,0.3) and (6.95,1.4) .. (10.93,3.29)   ;
\draw    (520,50) -- (528,50) ;
\draw [shift={(530,50)}, rotate = 180] [color={rgb, 255:red, 0; green, 0; blue, 0 }  ][line width=0.75]    (10.93,-3.29) .. controls (6.95,-1.4) and (3.31,-0.3) .. (0,0) .. controls (3.31,0.3) and (6.95,1.4) .. (10.93,3.29)   ;
\draw    (270,400) -- (278,400) ;
\draw [shift={(280,400)}, rotate = 180] [color={rgb, 255:red, 0; green, 0; blue, 0 }  ][line width=0.75]    (10.93,-3.29) .. controls (6.95,-1.4) and (3.31,-0.3) .. (0,0) .. controls (3.31,0.3) and (6.95,1.4) .. (10.93,3.29)   ;
\draw    (520,400) -- (528,400) ;
\draw [shift={(530,400)}, rotate = 180] [color={rgb, 255:red, 0; green, 0; blue, 0 }  ][line width=0.75]    (10.93,-3.29) .. controls (6.95,-1.4) and (3.31,-0.3) .. (0,0) .. controls (3.31,0.3) and (6.95,1.4) .. (10.93,3.29)   ;

\draw (115,79.4) node [anchor=north west][inner sep=0.75pt]    {$d^{\varepsilon _{1}}$};
\draw (115,129.4) node [anchor=north west][inner sep=0.75pt]    {$d^{\varepsilon _{1}}$};
\draw (115,179.4) node [anchor=north west][inner sep=0.75pt]    {$d^{\varepsilon _{1}}$};
\draw (115,229.4) node [anchor=north west][inner sep=0.75pt]    {$d^{\varepsilon _{1}}$};
\draw (115,279.4) node [anchor=north west][inner sep=0.75pt]    {$d^{\varepsilon _{1}}$};
\draw (115,329.4) node [anchor=north west][inner sep=0.75pt]    {$d^{\varepsilon _{1}}$};
\draw (115,372.4) node [anchor=north west][inner sep=0.75pt]    {$d^{\varepsilon _{1}}$};
\draw (-10,172.4) node [anchor=north west][inner sep=0.75pt]    {\rotatebox{90}{$u=b^{-1} c^{n^{2}} s$}};
\draw (633,172.4) node [anchor=north west][inner sep=0.75pt]    {\rotatebox{90}{$v=b^{-1} s$}};
\draw (31,362.4) node [anchor=north west][inner sep=0.75pt]    {$b$};
\draw (607,362.4) node [anchor=north west][inner sep=0.75pt]    {$b$};
\draw (265,79.4) node [anchor=north west][inner sep=0.75pt]    {$d^{\varepsilon _{2}}$};
\draw (265,129.4) node [anchor=north west][inner sep=0.75pt]    {$d^{\varepsilon _{2}}$};
\draw (265,179.4) node [anchor=north west][inner sep=0.75pt]    {$d^{\varepsilon _{2}}$};
\draw (265,229.4) node [anchor=north west][inner sep=0.75pt]    {$d^{\varepsilon _{2}}$};
\draw (265,279.4) node [anchor=north west][inner sep=0.75pt]    {$d^{\varepsilon _{2}}$};
\draw (265,329.4) node [anchor=north west][inner sep=0.75pt]    {$d^{\varepsilon _{2}}$};
\draw (265,372.4) node [anchor=north west][inner sep=0.75pt]    {$d^{\varepsilon _{2}}$};
\draw (505,76.4) node [anchor=north west][inner sep=0.75pt]    {$d^{\varepsilon _{m-1}}$};
\draw (505,126.4) node [anchor=north west][inner sep=0.75pt]    {$d^{\varepsilon _{m-1}}$};
\draw (505,176.4) node [anchor=north west][inner sep=0.75pt]    {$d^{\varepsilon _{m-1}}$};
\draw (505,226.4) node [anchor=north west][inner sep=0.75pt]    {$d^{\varepsilon _{m-1}}$};
\draw (505,276.4) node [anchor=north west][inner sep=0.75pt]    {$d^{\varepsilon _{m-1}}$};
\draw (505,326.4) node [anchor=north west][inner sep=0.75pt]    {$d^{\varepsilon _{m-1}}$};
\draw (501,369.4) node [anchor=north west][inner sep=0.75pt]    {$d^{\varepsilon _{m-1}}$};
\draw (70,212.4) node [anchor=north west][inner sep=0.75pt]    {$s^{\gamma _{1}}$};
\draw (156,212.4) node [anchor=north west][inner sep=0.75pt]    {$s^{\xi _{1}}$};
\draw (221,212.4) node [anchor=north west][inner sep=0.75pt]    {$s^{\gamma _{2}}$};
\draw (306,212.4) node [anchor=north west][inner sep=0.75pt]    {$s^{\xi _{2}}$};
\draw (455,212.4) node [anchor=north west][inner sep=0.75pt]    {$s^{\gamma _{m-1}}$};
\draw (556,212.4) node [anchor=north west][inner sep=0.75pt]    {$s^{\xi _{m-1}}$};
\draw (61,22.4) node [anchor=north west][inner sep=0.75pt]    {$w_{1}$};
\draw (187,22.4) node [anchor=north west][inner sep=0.75pt]    {$w_{2}$};
\draw (564,22.4) node [anchor=north west][inner sep=0.75pt]    {$w_{m}$};
\draw (61,412.4) node [anchor=north west][inner sep=0.75pt]    {$w_{1}$};
\draw (187,412.4) node [anchor=north west][inner sep=0.75pt]    {$w_{2}$};
\draw (564,412.4) node [anchor=north west][inner sep=0.75pt]    {$w_{m}$};
\draw (68,162.4) node [anchor=north west][inner sep=0.75pt]    {$\Delta _{1}$};
\draw (191,162.4) node [anchor=north west][inner sep=0.75pt]    {$\Delta _{2}$};
\draw (385,172.4) node [anchor=north west][inner sep=0.75pt]    {$\cdots$};
\draw (568,162.4) node [anchor=north west][inner sep=0.75pt]    {$\Delta _{m}$};
\draw (311,-10.6) node [anchor=north west][inner sep=0.75pt]    {$w$};
\draw (311,450.4) node [anchor=north west][inner sep=0.75pt]    {$w$};

\end{tikzpicture}

 \caption{A van~Kampen diagram $\Delta$  for $uwv^{-1}w^{-1}$ per Proposition~\ref{lower bound on anng4 prop}.}
  \label{vK diagram concentric d-rings}
\end{figure}
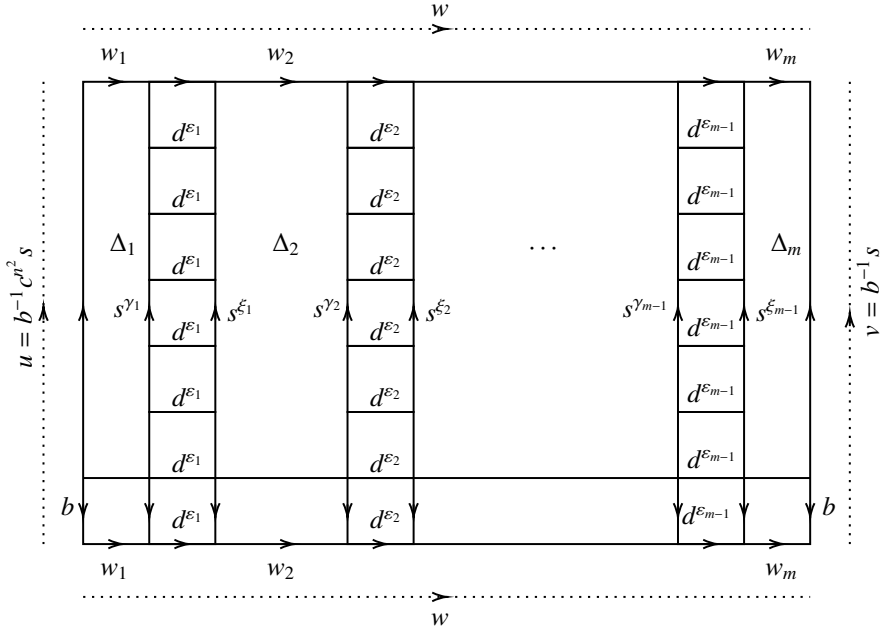

Killing $d$ and $s$ maps $G_4$ onto the Heisenberg group $H = \langle a, b, c \mid [a,c], [b,c], [a,b]c^{-1} \rangle$ and sends $w \mapsto a^{\alpha} c^{\gamma}$, where $\alpha = \exp_a(w)$ and $\gamma = \exp_c(w)$.  Thereby, $uw = wv$ in $G_4$ implies that $b^{-1}c^{n^2} a^{\alpha} c^{\gamma}  =  a^{\alpha} c^{\gamma} b^{-1}$ in $H$, and so $\alpha = -n^2$, because $\langle c \rangle \cong \Z$ is central and $a^{\alpha} b^{-1} = b^{-1} a^{\alpha}  c^{-\alpha}$ in $H$.  

Killing $b$ and $c$ maps $G_4$ on $B = \langle a, d, s\mid  s^a s^{-2}, s^d   s^{-2} \rangle$ and deleting these letters takes $w$ to the word $\overline{w}$, which represents an element of the free subgroup $F(a,d)$ of $B$.      The equality $uw = wv$ in $G_4$ implies that  $s \overline{w} = \overline{w} s$ in $B$.  Then on mapping $B$ to $\langle a,  s\mid  s^a s^{-2} \rangle$ by sending $d \mapsto a$, we learn that $\exp(\overline{w}) =0$, and so $\exp_d(w) = - \exp_a(w)$, which we previously evaluated to be $n^2$.

Lemma~\ref{no b-rings} allows us to assume that $\Omega$ is reduced and contains no contractible $b$- or $d$-rings.  And then because there are no $d$-edges in $\partial \Omega$, any $d$-corridor $\delta$ in $\Omega$ must close up as a non-contractible $d$-ring.  We claim that $\delta$ can have at most one 2-cell in common with $\beta$.  Otherwise there would (by an \emph{innermost} argument) be a $d$-corridor that crosses $\beta$ twice so as to enclose a disc-subdiagram whose boundary is labelled by a word $\sigma s^{\nu}$ (where $\nu \in \Z$) such that  $\sigma= \sigma(a,c)$ follows part of one side of $\beta$ and  $s^{\nu}$ follows part of one side of a $d$-corridor.  But there can be no $b$-edges or $d$-edges in this subdiagram, so  $\sigma s^{\nu} =1$ in $\langle a,c,s \mid [a,c], s^as^{-2} \rangle$.  Mapping to $\langle a, s \mid  s^as^{-2} \rangle$ by killing $c$, we learn that $\nu=0$, and there are adjacent 2-cells, both labelled by $[b,d]$, where the $b$- and $d$-corridors cross, contrary to $\Omega$ being reduced.   

So the $d$-corridors  in $\Omega$ form a family of nested non-contractible $d$-rings, one for each $d^{\pm 1}$ in $w$.   Therefore $\Delta$ is as shown in   Figure~\ref{vK diagram concentric d-rings}: $w = w_1 d^{\epsilon_1} w_2 d^{\epsilon_2} \cdots d^{\epsilon_{m-1}} w_m$, where each $\epsilon_i = \pm 1$, $\nu_m =1$, and, for $i=1, \ldots, m-1$,  $w_i = w_i(a,c)$, the words along the sides of the $d$-corridors are  $b^{-1}s^{\nu_i}$ and $b^{-1}s^{\xi_i}$, and $\nu_i = 2^{\epsilon_i} \xi_i$, and $\Delta$ consists of one $b$-corridor and the $d$-corridors, and, in between, subdiagrams $\Delta_1$, \ldots, $\Delta_m$ over $\langle a,c,s \mid [a,c], s^as^{-2} \rangle$.        

For $i =2, \ldots, m$, the boundary of the sub-diagram $\Delta_i$ is labelled by  $$\kappa_i  \ = \  s^{\xi_{i-1}} \, w_i(a,c) \, s^{-\nu_i}  \,   w_i(c^{-1} a,c)^{-1}.$$    Killing $s$ maps $\langle a,c,s \mid [a,c], s^as^{-2} \rangle \onto \Z^2 = \langle a,c \rangle$, so $\exp_c(\kappa_i) =0$, and therefore $\exp_a(w_i) =0$.  
Killing $c$ maps $\langle a,c,s \mid [a,c], s^as^{-2} \rangle \onto   \langle a,s  \mid  s^as^{-2} \rangle$ and $w_i \mapsto 1$, and so $s^{\xi_{i-1}} =   s^{\nu_i}$ in $\langle a,s  \mid  s^as^{-2} \rangle$, which implies that $\xi_{i-1} = \nu_i$.

So $\nu_i    = \  2^{\epsilon_i} \nu_{i+1}$  for $i=1, \ldots, m-1$, and because $ \exp_d(w)  =   \epsilon_1 + \cdots + \epsilon_{m-1}   =    n^2$ and $\nu_m  =   1$, we deduce that  $\nu_j = 2^{n^2}$  for some $j$.  A count of the 2-cells comprising the $d$-corridor along one side of which we read   $s^{\nu_j}$ gives $\Area(\Delta) \geq 2^{n^2-1}$, and therefore $\Ann(u,v) \geq 2^{n^2-1}$, as required.      
\end{proof}


\bibliographystyle{alpha}
\bibliography{bib}

\ni  {Conan Gillis and Timothy R.\ Riley} \rule{0mm}{6mm} \\
Department of Mathematics, 310 Malott Hall,  Cornell University, Ithaca, NY 14853, USA 
\\ {cg527@cornell.edu}, \
\href{https://math.cornell.edu/conan-gillis}{http://www.math.cornell.edu/conan-gillis/}
 \\ {tim.riley@math.cornell.edu},  \
\href{http://www.math.cornell.edu/~riley/}{http://www.math.cornell.edu/$\sim$riley/}

\end{document}